\documentclass[12pt]{amsart}

\usepackage{amsmath,amsthm,amsfonts,amssymb,eucal}

\usepackage{color}

\renewcommand {\a}{ \alpha }
\renewcommand{\b}{\beta}

\newcommand{\g}{\gamma}
\newcommand{\G}{\Gamma}

\renewcommand{\d}{\delta}
\newcommand{\s}{\sigma}
\renewcommand{\l}{\lambda}
\renewcommand{\L}{\Lambda}
\newcommand{\z}{\zeta}

\newcommand{\p}{\partial}

\newcommand{\Om}{\Omega}

\newcommand{\oq}{\ {\raise 7pt\hbox{${\scriptstyle\circ}$}}
	\kern -7pt{
		\hbox{$Q$}}}

\newcommand{\R}{ \mathbb R}

\newcommand {\bb}{\mathbf b}

\newcommand {\bx}{\mathbf x}

\newcommand {\be}{\mathbf e}

\newcommand {\bz}{\mathbf z}
\newcommand {\by}{\mathbf y}

\newcommand {\bn}{\mathbf n}

\newcommand {\boldeta}{\boldsymbol\eta}

\newcommand {\bxi}{\boldsymbol\xi}

\newcommand{\lu}{\langle}
\newcommand{\ru}{\rangle}


\newcommand{\CK}{\mathcal K}
\newcommand{\CB}{\mathcal B}

\newcommand{\CA}{\mathcal A}


\newcommand{\plainC}[1]{\textup{{\textsf{C}}}^{#1}}

\newcommand{\plainS}{\textup{{\textsf{S}}}}

\newcommand{\plainL}[1]{\textup{{\textsf{L}}}^{#1}}

\DeclareMathOperator{\tr}{{tr}}

\newcommand{\1}
{{\,\vrule depth3pt height9pt}{\vrule depth3pt height9pt}
	{\vrule depth3pt height9pt}{\vrule depth3pt height9pt}\,}

\DeclareMathOperator {\dist} {{dist}}

\DeclareMathOperator \meas {{meas}}

\DeclareMathOperator{\op}{{Op}}

\DeclareMathOperator{\supp}{{supp}}




\hfuzz1pc 
\vfuzz1pc

\newtheorem{thm}{Theorem}[section]
\newtheorem{cor}[thm]{Corollary}
\newtheorem{lem}[thm]{Lemma}
\newtheorem{prop}[thm]{Proposition}
\newtheorem{cond}[thm]{Condition}

\theoremstyle{definition}

\newtheorem{rem}[thm]{Remark}

\numberwithin{equation}{section}

%
%

\newcommand{\bee}{\begin{equation}}
	\newcommand{\ene}{\end{equation}}
\newcommand{\bees}{\begin{equation*}}
	\newcommand{\enes}{\end{equation*}}
\newcommand{\bes}{\begin{split}}
	\newcommand{\ens}{\end{split}}

\newcommand{\bet}{\begin{thm}}
	\newcommand{\ent}{\end{thm}}
\newcommand{\bel}{\begin{lem}}
	\newcommand{\enl}{\end{lem}}
\newcommand{\bec}{\begin{cor}}
	\newcommand{\enc}{\end{cor}}
\newcommand{\bep}{\begin{proof}}
	\newcommand{\enp}{\end{proof}}
\newcommand{\ber}{\begin{rem}}
	\newcommand{\enr}{\end{rem}}








\setlength{\textwidth}{450pt }

\begin{document}
	\hoffset -4pc

\title
[ The Szeg\H o formulas]
{{On the Szeg\H o formulas for truncated Wiener-Hopf operators}}
\author{Alexander V. Sobolev}
\address{Department of Mathematics\\ University College London\\
	Gower Street\\ London\\ WC1E 6BT UK}
\email{a.sobolev@ucl.ac.uk}
\keywords{Non-smooth functions of 
Wiener--Hopf operators, asymptotic 
trace formulas, entanglement entropy}
\subjclass[2010]{Primary 47G30, 35S05; Secondary 45M05, 47B10, 47B35}

\begin{abstract}	 	
We consider functions of  multi-dimensional versions of truncated 
Wiener--Hopf operators with smooth symbols, and study 
the scaling asymptotics of their traces. The obtained 
results extend the asymptotic formulas obtained by H. Widom  in the 1980's 
to non-smooth functions, and non-smooth truncation domains. 

The obtained asymptotic formulas are used to analyse the 
scaling limit of the spatially 
bipartite entanglement entropy of thermal equilibrium states of non-interacting 
fermions at positive temperature.  
\end{abstract}

\maketitle

\section{Introduction}   

By the truncated Wiener-Hopf operator we understand the operator 
\begin{equation*}
W_\a  = W_\a(a; \L) = \chi_\L \op_\a(a) \chi_\L,\ \a>0,
\end{equation*}
where $\chi_\L$ is the indicator function of a 
region $\L\subset\R^d, d\ge 1$, and 
the notation $\op_\a(a)$ stands for the $\a$-pseudo-differential 
operator with the symbol $a = a(\bxi)$, i.e.
\begin{equation*}
\bigl(\op_\a(a) u\bigr)(\bx)
= \frac{\a^{d}}{(2\pi)^{\frac{d}{2}}}
\iint e^{i\a\bxi\cdot(\bx-\by)} a(\bxi) 
u(\by) d\by d\bxi,  u\in \plainS(\R^d).
\end{equation*} 
If the symbol $a$ is bounded then 
the operator $\op_\a(a)$, and hence $W_\a(a; \L)$, are bounded in $\plainL2(\R^d)$. 
Given a test function $f: \R\to\mathbb C$, we are interested in the
difference operator 
\begin{equation}\label{Dalpha:eq}
D_\a(a, \L; f) := \chi_\L f(W_\a(a; \L)) \chi_\L 
- W_\a(f\circ a; \L).
\end{equation}
Under appropriate conditions on $f, a$ and $\L$ this operator is trace class, 
and the subject of this paper is to study 
the trace of \eqref{Dalpha:eq} as $\a\to\infty$. 
We interpret the trace formulas to be obtained as 
``Szeg\H o asymptotic formulas" or ``Szeg\H o formulas", 
following the tradition that is traced back to 
the original G.Szeg\H o's papers 
\cite{Szego1915} and \cite{Szego1952}, see e.g. 
\cite{Widom1985} and references therein. 
The reciprocal parameter $\a^{-1}$ can be 
naturally viewed as Planck's constant, 
and hence the limit $\a\to\infty$ can be regarded as 
the quasi-classical limit. 
By a straightforward change of variables the operator \eqref{Dalpha:eq} 
is unitarily equivalent to $D_1(a, \a\L; f)$, so that the asymptotics  
$\a\to\infty$ can be also interpreted as a large-scale limit, 
which makes the term ``Szeg\H o asymptotics" even more 
natural. 

At this point we need to make one preliminary 
remark about the operator \eqref{Dalpha:eq} being trace class. If 
\begin{enumerate}
\item 
$\L$ is bounded, 
\item 
 the function $f$ is smooth and satisfies 
$f(0) = 0$, and
\item 
 the symbol $a$ decays sufficiently fast at infinity, 
\end{enumerate}
\noindent
then both operators on the right-hand side of \eqref{Dalpha:eq} can be easily shown to be trace class. 
However, as we see later, the difference \eqref{Dalpha:eq} may be trace class even without the conditions (1) and (2). 
%
In particular, 
being able to study unbounded $\L$'s 
is important for applications. 

The Szeg\H o type asymptotics for the truncated Wiener-Hopf operators 
for smooth bounded domains $\L$ and smooth functions $f$ 
have been intensively studied in the 1980's and early 1990's, 
see \cite{Widom1980}, \cite{Widom1982}, \cite{Roccaforte1984}, \cite{BuBu} 
and 
\cite{Widom1985} for further references. 
In particular, a full asymptotic expansion of 
$\tr D_\a(a, \L; f)$ in powers of 
$\a^{-1}$ was derived independently 
in \cite{BuBu} and in \cite{Widom1985}. 
We are concerned only with the leading term asymptotics: 
they have  the form 
\begin{align}\label{bd:eq}
\tr D_\a(a, \L; f) = \a^{d-1}(\CB_d(a)+o(1)), \a\to\infty,
\end{align}
where the coefficient $\CB_d(a)  = \CB_d(a; \p\L, f)$ is defined 
in \eqref{cbd:eq}. 
Our objective is to generalize this formula in two ways: namely, 
we extend it \\
-- to non-smooth functions $f$, such as, for example, $f(t) = |t|^\g$ 
with some $\g >0$, and\\ 
-- to piece-wise smooth regions $\L$. \\
The extension to non-smooth functions for 
$d = 1$ was implemented in \cite{Leschke2016}. 
In this paper we concentrate on the 
multi-dimensional case, i.e. on $d\ge 2$. 
The precise statement is contained in Theorem \ref{main:thm}. 

We need to emphasize a few points: 
\begin{enumerate}
\item
In the main theorem the non-smoothness conditions 
do not concern the symbol $a$: 
it 
is always assumed to be a $\plainC\infty$-function. 
\item 
In contrast to the results of \cite{BuBu} and \cite{Widom1985}, 
for non-smooth functions $f$ we are only able to establish the first term of the asymptotics.  
\item 
The case of a symbol having jump discontinuities 
(e.g. the indicator function 
of a bounded domain in $\R^d$, $d\ge 2$) 
was studied in \cite{Sobolev2013} (smooth $f$ and $\L$) 
and later in \cite{Sobolev2015}, \cite{Sobolev2016} (non-smooth $f$ and $\L$). 
In this case the asymptotics  
for the operator \eqref{Dalpha:eq} have a form different 
from \eqref{bd:eq}, and 
their proof requires different methods. 
\item 
In \cite{Sobolev2016a}  the transition between the smooth and discontinuous 
symbol was studied: 
the smooth 
symbol $a$ was supposed to depend on an extra 
``smoothing"
parameter $T>0$ so that 
$a=a_T$ converged to an indicator function as $T\to 0$. 
The obtained asymptotic formula described the behaviour of the trace of 
\eqref{Dalpha:eq} as the two parameters, $\a$ and $T$, independently 
tended to their respective limits: $\a\to\infty$ and $T\to 0$.
%
On the other hand, the results 
of \cite{Sobolev2016a} 
did not cover the case $\a\to\infty$, $T = const$.  
One aim of the current paper is to bridge this gap.
\end{enumerate}

The non-smooth generalizations are partly motivated by 
new applications of the Szeg\H o asymptotics 
in Statistical Physics, 
connected with the entanglement entropy for free fermions 
(EE), see \cite{GioevKlich2006}, 
\cite{Helling2009}, \cite{LeschkeSobolevSpitzer2014}, 
\cite{LeschkeSobolevSpitzer2016} and references therein.  
In particular, the asymptotic trace formula 
for smooth symbols $a$ (i.e. the one in Theorem \ref{main:thm}) 
is used to describe the EE  
at a positive temperature (see \cite{LeschkeSobolevSpitzer2016}) , 
whereas the zero temperature case requires the use of discontinuous symbols 
(see \cite{LeschkeSobolevSpitzer2014}).  
We briefly comment on these applications after the main Theorem 
\ref{main:thm}.

The paper is organized as follows. In Sect. 2 we provide some preliminary 
information and state the main result, followed by a short discussion of 
the applications to the EE. 
It is not so trivial to see that the main asymptotic coefficient 
$\CB_d(a, \p\L; f)$ is finite, if the function $f$ is non-smooth. This point and 
other useful properties of $\CB_d(a, \p\L; f)$ are clarified in Sect. 3. 
In Sect. 4 we collect some known and some new bounds for trace norms of 
Wiener-Hopf operators. 
Among other bounds, 
Sect. 4 contains the crucial trace-norm estimate 
for the operator \eqref{Dalpha:eq} 
with a non-smooth 
function $f$( see \eqref{crelle2:eq}) obtained in \cite{Leschke2016}. 
The bounds of Sect. 4 are instrumental in the proof 
of the ``local" asymptotics for the operator \eqref{Dalpha:eq}, see 
Theorem \ref{loc:thm} in Sect. 5. The local results are put together 
to complete the proof of Theorem \ref{main:thm} in Sect. 6. 
The proof follows the ideas 
of \cite{Leschke2016}, \cite{Sobolev2015}, \cite{Sobolev2016}. 
Specifically, to justify the formula 
\eqref{bd:eq} 
we use the standard method of asymptotic analysis: 
first we 
prove it 
for polynomial functions $f$, 
then ``close" the asymptotics using the estimate \eqref{crelle2:eq} from Sect. 4. 

Throughout the paper we adopt the following convention.
For two non-negative numbers (or functions) 
$X$ and $Y$ depending on some parameters, 
we write $X\lesssim Y$ (or $Y\gtrsim X$) if $X\le C Y$ with 
some positive constant $C$ independent of those parameters. 
For example, $\a \gtrsim 1$ 
means that $\a \ge c$ with some constant $c$, independent 
of $\a$. 
If $X\lesssim Y$ and $X\gtrsim Y$, then we write $X\asymp Y$. 
To avoid possible misunderstanding we often make explicit comments on the nature of 
(implicit) constants in the bounds. 

The notation $B(\bz, R)\subset\R^d$, $\bz\in\R^d$, $R>0$,
is used for the open ball of radius $R$, centred at the point 
$\bz$. The function $\chi_{\bz, R}$
stands for the indicator of the ball $B(\bz, R)$. 

\textbf{Acknowledgements.} 
This paper grew  out of numerous discussions 
with H. Leschke and W. Spitzer, who have author's deepest gratitude. 
A part of this paper was written during several visits of 
the author to the FernUniversit\"at Hagen in 2015-2016.

The author was supported by EPSRC grant EP/J016829/1.

\section{Main results}
 
First we specify conditions on the set $\L$ under which we study the operator \eqref{Dalpha:eq}. 

\subsection{The domains and regions}\label{domains:subsect} 
Assume that $d\ge 2$. 
We say that $\L$ is a basic Lipschitz 
(resp. basic $\plainC{m}$, $m = 1, 2, \dots$) domain, if 
there is a Lipschitz (resp. $\plainC{m}$) 
function $\Phi = \Phi(\hat\bx)$, $\hat\bx\in\R^{d-1}$, such that 
with a suitable choice of Cartesian coordinates $\bx = (\hat\bx, x_d)$, 
the domain $\L$ is the epigraph of the function $\Phi$, i.e. 
\begin{equation}\label{basic_d:eq}
\L = \{\bx\in\R^d: x_d > \Phi(\hat\bx)\}. 
\end{equation} 
We use the notation $\L = \G(\Phi)$. 
The function $\Phi$ is assumed to be globally Lipschitz, i.e. 
the constant 
\begin{equation}\label{MPhi:eq}
M_{\Phi} = \underset{\hat\bx\not=\hat\by }
\sup\ \frac{|\Phi(\hat\bx) - \Phi(\hat\by)|}{|\hat\bx-\hat\by|},
\end{equation}
 is finite. 
 \textit{Throughout the paper, all 
 estimates involving basic Lipschitz domains 
$\L = \G(\Phi)$, are uniform in the number $M_\Phi$}.  
 
A domain (i.e. connected open set) 
is said to be Lipschitz (resp. $\plainC{m}$) if locally 
it coincides with some basic Lipschitz 
(resp. $\plainC{m}$-) domain. 
We call $\L$ a Lipschitz (resp. $\plainC{m}$-) 
region if $\L$ is a union of 
finitely many Lipschitz (resp. $\plainC{m}$-) 
domains such that their closures are pair-wise disjoint. The boundary 
$\p\L$ is said to be a $(d-1)$-dimensional Lipschitz surface. 

A basic Lipschitz 
domain $\L=\G(\Phi)$ is said to be piece-wise $\plainC{m}$  
with some $m = 1, 2, \dots$, if  the function 
$\Phi$ is $\plainC{m}$-smooth away from a collection of 
finitely many  $(d-2)$-dimensional 
Lipschitz surfaces $L_1, L_2, \dots\subset \R^{d-1}$. 
We denote
\begin{align}\label{pls:eq}
(\p\L)_{\rm s} = \Phi(L_1)\cup \Phi(L_2)\cup \dots\subset\p\L.
\end{align}
This is the subset  
where the $\plainC{m}$-smoothness of the surface $\p\L$ may break down. 
A piece-wise $\plainC{m}$-region $\L$ and the set $(\p\L)_{\rm s}$ 
for it are defined in the obvious way. 
An expanded version of these definitions 
can be found in \cite{Sobolev2014}, 
\cite{Sobolev2015}, and here we omit the standard details. 


The minimal assumptions on the sets
featuring in this paper are laid out in the following condition. 

\begin{cond}\label{domain:cond}
The set $\L\subset \R^d, d\ge 2$, 
is a Lipschitz region, 
and  either $\L$ or 
	$\R^d\setminus\L$ is bounded.
\end{cond}

Some results, including the main asymptotic formula 
in Theorem \ref{main:thm}, 
require higher smoothness of $\L$.  
Note that if $\L$ is a Lipschitz 
(or $\plainC{m}$-) region, then so is 
the interior of $\R^d\setminus \L$.

\subsection{The main result}

Suppose that $a\in\plainC\infty(\R^d)$ satisfies the condition
\begin{align}\label{abounds:eq}
|\nabla^m a(\bxi)|\lesssim \lu\bxi\ru^{-\b},\ \b > d, 
\end{align}
for all $m = 0, 1, 2, \dots$, with some implicit constants that 
may depend on $m$. Here 
we have used the standard notation $\lu \bxi\ru = \sqrt{1+|\bxi|^2}$.

In order to state the main result we need to introduce the principal  
asymptotic coefficient. For a function $g:\mathbb C\mapsto \mathbb C$
define
 \begin{equation}\label{U:eq}
 	U(s_1, s_2; g) 
 	= \int_0^1 \frac{g\bigl((1-t)s_1 + t s_2\bigr) 
 		- [(1-t)g(s_1) + t g(s_2)]}{t(1-t)} \,dt, s_1, s_2\in\mathbb C. 
 \end{equation} 
 This quantity is well-defined for any H\"older function $g$. 
For $d = 1$ the function $U$ immediately 
defines the asymptotic coefficient:
\begin{align}\label{cb1:eq}
\CB_1(a; g) 
= \frac{1}{8\pi^2}\lim\limits_{\epsilon\downarrow 0}
\int\limits_{\R} \int\limits_{|t|>\epsilon} 
\frac{U\bigl(a(\xi), a(\xi + t); g\bigr)}{t^2}
dt  d\xi.
\end{align}
As explained in the next section, 
for functions $g\in\plainC2(\mathbb R)$ the integral above exists in the 
usual sense. 

As already mentioned previously, our main interest is to include 
less smooth functions in the consideration. Precisely, we are interested in the 
functions satisfying the following condition. 

\begin{cond}\label{f:cond}
	Assume that for some integer $n \ge 1$ the function 
	$f\in\plainC{n}(\R\setminus\{ x_0 \})\cap\plainC{}(\R)$ satisfies the 
	bound 
	\begin{equation}\label{fnorm:eq}
	\1 f\1_n = 
	\max_{0\le k\le n}\sup_{x\not = x_0} |f^{(k)}(x)| |x-x_0|^{-\g+k}<\infty
	\end{equation}
	with some $\g >  0$, 
	and is supported on the interval $[x_0-R, x_0+R]$ with some $R>0$.  
\end{cond}

As shown in \cite{Sobolev2016b}, 
for such functions the principal 
value definition \eqref{cb1:eq} 
becomes necessary if $\g$ is small, 
see Proposition \ref{scales:prop} in the next 
section. We often use the notation
\begin{align}\label{vark:eq}
\varkappa = \min\{\g, 1\},\ \forall \g>0.
\end{align}
For $d\ge 2$ we introduce the functional of $a$, 
defined for every $\be\in\mathbb S^{d-1}$ 
as a principal value integral similar to \eqref{cb1:eq}:
\begin{align}\label{ad:eq}
\CA_d(a, \be; g) 
= \frac{1}{8\pi^2}\lim\limits_{\epsilon\downarrow 0}
\int\limits_{\R^d} \int\limits_{|t|>\epsilon} 
\frac{U\bigl(a(\bxi), a(\bxi + t\be); g\bigr)}{t^2}
dt  d\bxi.
\end{align}
Assuming that $\L$ satisfies Condition \ref{domain:cond}, 
for any continuous function $\varphi$ define
\begin{align}\label{cbd:eq}
\begin{cases}
	\CB_d(a, \varphi; \p\L,  g) := \frac{1}{(2\pi)^{d-1}}
	\int_{\p\L}   \varphi \CA_d(a, \bn_\bx; g) dS_\bx,\\[0.2cm]
	\CB_d(a; \p\L, g) := \CB_d(a, 1; \p\L,  g).
\end{cases}	
	\end{align} 
When it does not cause confusion, 
sometimes some or all variables are omitted from the 
notation and we write, for instance, $\CB_d(a)$, $\CB_d$.

It will be useful to rewrite 
$\CA_d, d\ge 2,$ via $\CB_1$. 
For each unit vector $\be\in\R^d, d\ge 2,$ define the hyperplane 
\begin{equation*}
\Pi_{\be} = \{ \bxi\in\R^d: \be\cdot\bxi = 0 \}.
\end{equation*}
Introduce the orthogonal coordinates 
$\bxi = (\hat{\bxi}, t)$ such that 
$\hat{\bxi}\in \Pi_\be, t\in\R$. 
Then, thinking of the symbol $a(\bxi)$ as depending on the real variable $t$, 
 and on the parameter $\hat\bxi$, we can rewrite 
 the definition \eqref{ad:eq} as follows:
 \begin{align}\label{cbd1d:eq}
 \CA_d(a, \be; f)
 = \int_{\Pi_{\be}} 
	\CB_1\bigl(a(\hat{\bxi}, \ \cdot\ ); f\bigr)d\hat{\bxi}.
 \end{align}

	The next theorem constitutes the main result of the paper. 

\begin{thm}\label{main:thm}
Suppose that $a\in\plainC\infty(\R^d), d\ge 2$, is a real-valued 
function that 
satisfies \eqref{abounds:eq}
 Assume also that $\L$ is a piece-wise $\plainC1$-region 
 satisfying Condition \ref{domain:cond}. 

Let $X = \{z_1, z_2, \dots, z_N\}\subset \R$, $N <\infty$, be a collection of 
points on the real line. 
Suppose that $f\in\plainC2(\R\setminus X)$ is a function such that 
in a neighbourhood of each point $z\in X$ 
it satisfies the bound
\begin{equation}\label{fmain:eq}
|f^{(k)}(t)|\lesssim |t - z|^{\g-k}, \ k = 0, 1, 2, 
\end{equation} 
with some $\g>0$.  

 If $\b > d\varkappa^{-1}$,
 then 
 the operator $D_\a(a, \L; f)$ is trace-class and 
\begin{align}\label{main:eq}
\lim\limits_{\a\to\infty} \a^{1-d}\tr D_\a(a, \L; f) 
= \CB_d(a, \p\L; f).
\end{align}
The above asymptotics are uniform in symbols $a$ that satisfy \eqref{abounds:eq} with the same 
implicit constants.
\end{thm}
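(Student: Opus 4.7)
The plan is the standard asymptotic-closure procedure hinted at in the introduction: establish \eqref{main:eq} first for $f$ in a smooth subclass where the classical Widom analysis applies, and then remove the smoothness hypothesis by approximation, using the sharp trace-norm bound \eqref{crelle2:eq} of \cite{Leschke2016} to control the perturbation. The workhorse behind this is the local asymptotic result in Theorem \ref{loc:thm}, which handles the contribution of a single smooth piece of $\p\L$; the global statement will follow by a partition-of-unity/gluing argument on $\p\L$.

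Concretely, I would proceed in three steps. \emph{(i) Localization.} Choose a finite partition of unity $\{\psi_j\}$ on $\overline{\L}$ such that each $\psi_j$ is supported either (a) away from $\p\L$, (b) in a coordinate chart on $\p\L\setminus(\p\L)_{\rm s}$ realising $\L$ locally as a basic $\plainC1$ epigraph, or (c) in a thin neighbourhood of $(\p\L)_{\rm s}$. The contribution of type (a) is of order $\a^{d-2}$ (hence negligible relative to $\a^{d-1}$) by interior symbol calculus; the contribution of type (c) shrinks to zero as the tube thickness decreases, since $(\p\L)_{\rm s}$ has $(d-1)$-dimensional measure zero and the trace-norm bounds of Section 4 are uniform in $M_\Phi$ (cf.~\eqref{MPhi:eq}); the contribution of type (b) is delivered by Theorem \ref{loc:thm}, and summing gives $\CB_d(a;\p\L, f)$ via the reduction formula \eqref{cbd1d:eq}. \emph{(ii) Smooth $f$.} For $f\in\plainC\infty_0(\R)$ the relevant local asymptotic is essentially Widom's result from \cite{Widom1982, Widom1985}, after a Lipschitz change of variables straightening the basic $\plainC1$ boundary and a routine application of the pseudo-differential calculus; equivalently, for polynomial $f(t)=t^n$ one expands $f(W_\a)-W_\a(f\circ a)$ algebraically in commutators, each of which contributes an $\a^{d-1}$-order term whose coefficient matches \eqref{cbd1d:eq}. \emph{(iii) Closing.} For $f$ satisfying \eqref{fmain:eq}, decompose $f=f_\epsilon+g_\epsilon$ with $f_\epsilon\in\plainC2(\R)$ and $g_\epsilon$ supported in an $\epsilon$-neighbourhood of the singular set $X$, so that $g_\epsilon$ satisfies Condition \ref{f:cond} with $\1 g_\epsilon\1_n$ controlled by a vanishing power of $\epsilon$. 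Step (ii) yields \eqref{main:eq} for $f_\epsilon$; estimate \eqref{crelle2:eq} gives $\a^{1-d}\|D_\a(a,\L;g_\epsilon)\|_1\to 0$ as $\epsilon\to 0$, uniformly in $\a\gtrsim 1$; and the continuity of $\CB_d$ in $f$ (established in Section 3) gives $\CB_d(a;\p\L,g_\epsilon)\to 0$. Sending $\a\to\infty$ first and then $\epsilon\to 0$ produces \eqref{main:eq}.

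The hard part will be calibrating the exponents in \eqref{crelle2:eq} against the symbol-decay hypothesis $\b>d\varkappa^{-1}$ and the regularity exponent $\gamma$ on $f$: the closing argument requires the trace norm of $D_\a(a,\L;g_\epsilon)$ to grow no faster than $\a^{d-1}$ times a positive power of $\epsilon$, and this is precisely where $\varkappa=\min\{\gamma,1\}$ and $\b$ interact through the Leschke bound. A secondary technical point is the treatment of the singular set $(\p\L)_{\rm s}$: one must confirm that the trace-norm contribution from a tube of radius $\rho$ around $(\p\L)_{\rm s}$ is $o(\a^{d-1})$ uniformly in $\a$ as $\rho\to 0$, which relies on $M_\Phi$-uniform bounds from Section 4 applied to a family of basic Lipschitz pieces covering the tube. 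Modulo these quantitative calibrations, the localization in step~(i) and the smooth case in step~(ii) amount to bookkeeping on top of Widom's classical analysis and the estimates assembled in Sections 3 and 4.
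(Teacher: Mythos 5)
Your three-step skeleton (localize, do the smooth case, close by approximation) matches the paper's, and steps (i) and (iii) are essentially correct: the $\varepsilon$-tube argument around $(\p\L)_{\rm s}$ with $\#S\lesssim\varepsilon^{2-d}$ cells each contributing $(\a\varepsilon)^{d-1}$ is exactly the paper's Theorem~\ref{pw:thm}, and the decomposition $f=f_R^{(1)}+f_R^{(2)}$ with \eqref{crelle2:eq} and \eqref{coeffd:eq} controlling the singular piece is exactly the paper's Step~3. However, your step (ii) — the reduction of the curved $\plainC1$-boundary case to Widom's classical result — contains a genuine gap. The proposed ``Lipschitz change of variables straightening the basic $\plainC1$ boundary'' does not work here: the Wiener--Hopf operator $\chi_\L\op_\a(a)\chi_\L$ is built on a translation-invariant convolution kernel, and a $\plainC1$ diffeomorphism destroys this, turning $a(\bxi)$ into a variable-coefficient symbol with only $\plainC0$ dependence on $x$ — nowhere near the smooth symbol class needed, and in any case Proposition~\ref{Widom_82:prop} is a flat half-line result that does not tolerate such a transformation. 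The commutator-expansion alternative you mention ``whose coefficient matches \eqref{cbd1d:eq}'' asserts precisely the content that needs to be proved.

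What the paper actually does in place of your step (ii) is different and is where the real work is. It proves the asymptotics for polynomials $g_p$ only (so that Widom's analytic-function hypothesis is trivially met), works on Whitney balls of radius $\ell\asymp k\a^{-1}$ around the boundary (Proposition~\ref{hor:prop}), and replaces $\L$ by the tangent half-space $\L_0$ of \eqref{lambda0:eq} on each such ball, controlling the error in trace norm via a Hilbert--Schmidt estimate on $\chi_{\L\triangle\L_0}$ (Lemmas~\ref{tlp:lem}, \ref{tan:lem}) together with the modulus of continuity $\varepsilon(\ell)$ of $\nabla\Phi$. The half-space case is then handled by viewing the operator as an $\a$-pseudodifferential operator in the tangential variables with an operator-valued 1D Wiener--Hopf symbol (Lemma~\ref{planeas:lem}), and the whole argument requires a double limit $\a\to\infty$ followed by $k\to\infty$ (Lemma~\ref{boundas:lem}, Theorem~\ref{loc:thm}). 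This tangent-plane mechanism, with its two-scale Whitney covering and the $\Sigma_1/\Sigma_2$ bookkeeping, is the missing core of your argument. A secondary omission: the paper needs an intermediate pass from polynomials to arbitrary $\plainC2$ functions (its Step~2, a Stone--Weierstrass argument again closed by \eqref{crelle2:eq} and \eqref{bdtwice:eq}) before your step (iii) can be invoked, since your $f_\epsilon$ is only $\plainC2$, not in the class covered by Widom's proposition.
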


\begin{rem}\label{lin:rem}
Since 
$D_\a(a, \L; g) = 0$ and 
$\CB_d(a, \p\L; g) = 0$ 
for linear functions $g$, 
in the formula \eqref{main:eq} we can always replace $f$ by 
$f+g$ with a linear function $g$ of our choice. This elementary observation 
becomes useful in the proof of Theorem \ref{homolog:thm} below. 
\end{rem}

Theorem \ref{main:thm} has two useful 
corollaries describing the asymptotics 
of $D_\a(\l a, \L; f)$ as $\a\to\infty$ and $\l\to 0, \l >0$. 
The first one is concerned with the asymptotically homogeneous functions $f$.

\begin{thm}\label{homo:thm}
Let the region $\L$ be as in Theorem \ref{main:thm}. 
Suppose that the family of real-valued 
symbols $\{a_0, a_\l\}$, $\l>0$, satisfies \eqref{abounds:eq} with 
some $\b > d\varkappa^{-1}$, 
uniformly in $\l$, and is such that $a_\l\to a$ as $\l\to 0$ pointwise.

Denote $f_0(t) = M |t|^\g$ with some complex $M$ and $\g >0$. 
Suppose that the function $f\in \plainC2(\R\setminus\{0\})$ 
satisfies the condition 
\begin{align}\label{almosthom:eq}
\lim\limits_{t\to 0} |t|^{n-\g}\frac{d^n}{dt^n} 
\big(
f(t) - f_0(t)
\big) = 0,\ n = 0, 1, 2.
\end{align}
Then 
\begin{align}\label{homo:eq}
\lim\limits_{\a\to\infty\, \l\to 0} 
\bigl(\a^{1-d} \l^{-\g}\tr D_\a(\l a_\l, \L; f)\bigr)
= \CB_d(a_0, \p\L; f_0).
\end{align}
\end{thm}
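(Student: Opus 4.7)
The natural strategy is to decompose $f = f_0 + g$ with $g := f - f_0$, use the linearity of $D_\a(\cdot, \L; \cdot)$ in its last argument, and treat the two pieces separately. The key observation for the first piece is the exact scaling identity
\[
D_\a(\l a_\l, \L; f_0) = \l^\g D_\a(a_\l, \L; f_0),\qquad \l>0,
\]
which follows from the linearity of $W_\a(\,\cdot\,;\L)$ in the symbol, the pointwise identity $f_0(\l s) = \l^\g f_0(s)$ for $\l > 0$, and the corresponding spectral-calculus identity $f_0(\l T) = \l^\g f_0(T)$ for self-adjoint $T$. The analogous manipulation applied to $g$ yields $D_\a(\l a_\l, \L; g) = D_\a(a_\l, \L; g_\l)$, where $g_\l(s) := g(\l s)$.

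For the $f_0$-contribution, the plan is to apply Theorem \ref{main:thm} to each symbol $a_\l$ with test function $f_0$; this is permitted since $\{a_\l\}$ satisfies \eqref{abounds:eq} with common constants and $f_0$ satisfies \eqref{fmain:eq} with the prescribed $\g$. The uniformity clause of Theorem \ref{main:thm} then gives
\[
\a^{1-d}\tr D_\a(a_\l, \L; f_0) = \CB_d(a_\l, \p\L; f_0) + o_\a(1),\qquad\a\to\infty,
\]
uniformly in $\l$. One then passes to the limit $\l\to 0$ in the coefficient by dominated convergence, exploiting the pointwise convergence $a_\l\to a_0$ and the uniform decay \eqref{abounds:eq}, the principal-value integrals \eqref{ad:eq} being controlled through the structural properties of $\CB_d$ developed in Sect.~3. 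Multiplication by $\l^\g$ then produces the right-hand side of \eqref{homo:eq} from this piece.

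For the $g$-contribution, the goal is $\a^{1-d}\l^{-\g}\tr D_\a(a_\l, \L; g_\l)\to 0$. Since $\|a_\l\|_\infty$ is uniformly bounded, the spectrum of $W_\a(a_\l; \L)$ lies in a fixed interval $[-R, R]$, so after a harmless smooth cutoff outside $[-R,R]$ only $g_\l|_{[-R,R]}$ is relevant. A direct computation via the chain rule and \eqref{almosthom:eq} gives
\[
\sup_{0<|s|\le R}|s|^{k-\g}|g_\l^{(k)}(s)| = \l^\g \sup_{0<|t|\le R\l}|t|^{k-\g}|g^{(k)}(t)| = o(\l^\g),\qquad k=0,1,2,
\]
so the norm \eqref{fnorm:eq} of the cut-off $g_\l$ is $o(\l^\g)$. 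The trace-norm estimate \eqref{crelle2:eq} of Sect.~4 then majorises $\|D_\a(a_\l, \L; g_\l)\|_1$ by a constant times $\a^{d-1}$ times this norm, giving the required $o(\a^{d-1}\l^\g)$ bound. The main difficulty to overcome is the continuity of $\CB_d(\,\cdot\,, \p\L; f_0)$ under the weak (pointwise) convergence $a_\l\to a_0$, which will require careful handling of the principal-value integral \eqref{ad:eq} using the material of Sect.~3; the cutoff step for $g_\l$, needed so that the compact-support hypothesis behind \eqref{crelle2:eq} is satisfied without affecting the operator on its spectral interval, is routine by comparison.
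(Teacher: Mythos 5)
Your proposal follows essentially the same route as the paper's proof: split off the homogeneous piece $f_0$, pull out the $\l^\g$ scaling via the identity $D_\a(\l a_\l, \L; \phi) = D_\a(a_\l, \L; \phi(\l\,\cdot))$, apply Theorem~\ref{main:thm} to the $f_0$-piece uniformly in $\l$, and control the remainder via the trace-norm bound \eqref{crelle2:eq} after a cutoff, using \eqref{almosthom:eq} to make the relevant $\1\cdot\1_2$-norm $o(\l^\g)$. The only point you flag as a remaining "difficulty" — continuity of $\CB_d(\,\cdot\,, \p\L; f_0)$ under pointwise convergence $a_\l\to a_0$ — is exactly Corollary~\ref{cbconvf:cor}, already available from Sect.~3, so there is no gap to fill there.
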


In the next theorem instead of the homogeneous 
function $|t|^\g$ we consider the function 
\begin{align*}
h(t) = -t\log |t|, \ \ t\in\R,
\end{align*}
which still leads to a homogeneous asymptotic behaviour.

\begin{thm}\label{homolog:thm}
Let the region $\L$ be as in Theorem \ref{main:thm}. 
Suppose that the family of real-valued 
symbols $\{a_0, a_\l\}$, $\l>0$, satisfies \eqref{abounds:eq} with 
some $\b > d\varkappa^{-1}$, 
uniformly in $\l$, and is such that $a_\l\to a$ as $\l\to 0$ pointwise. 
 
Suppose that the function $f\in \plainC2(\R\setminus\{0\})$ 
satisfies the condition 
\begin{align}\label{almosthomlog:eq}
\lim\limits_{t\to 0} |t|^{n-1}\frac{d^n}{dt^n} 
\big(
f(t) - h(t)
\big) = 0,\ n = 0, 1, 2.
\end{align}
Then 
\begin{align}\label{homolog:eq}
\lim\limits_{\a\to\infty\, \l\to 0} 
\bigl(\a^{1-d} \l^{-1}\tr D_\a(\l a_\l, \L; f)\bigr)
= \CB_d(a_0, \p\L; h).
\end{align}
\end{thm}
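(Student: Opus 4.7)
The approach mirrors that of Theorem \ref{homo:thm}, with the role of the homogeneous function $M|t|^\gamma$ now played by $h(t) = -t\log|t|$. The key algebraic observation, and the reason the author highlights Remark \ref{lin:rem}, is the ``logarithmic homogeneity''
\begin{equation*}
h(\lambda t) = \lambda h(t) - (\lambda\log\lambda)\,t, \qquad \lambda > 0,
\end{equation*}
so $h(\lambda\,\cdot\,)$ differs from $\lambda h$ only by a linear function. Combining this with $W_\alpha(\lambda a_\lambda; \Lambda) = \lambda W_\alpha(a_\lambda;\Lambda)$, the functional calculus for the self-adjoint operator $W_\alpha(a_\lambda;\Lambda)$, and Remark \ref{lin:rem} (which annihilates linear terms in both $D_\alpha$ and $\CB_d$), one obtains the exact scaling identities
\begin{equation*}
D_\alpha(\lambda a_\lambda, \Lambda; h) = \lambda\,D_\alpha(a_\lambda, \Lambda; h), \qquad \CB_d(\lambda a_\lambda, \partial\Lambda; h) = \lambda\,\CB_d(a_\lambda, \partial\Lambda; h).
\end{equation*}

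The plan is then to split $f = h + \tilde f$ with $\tilde f := f - h$, so by linearity and the identity $D_\alpha(\lambda a_\lambda,\Lambda;\tilde f) = \lambda D_\alpha(a_\lambda,\Lambda;G_\lambda)$ for $G_\lambda(s):=\tilde f(\lambda s)/\lambda$, one has
\begin{equation*}
\alpha^{1-d}\lambda^{-1}\tr D_\alpha(\lambda a_\lambda, \Lambda; f) = \alpha^{1-d}\tr D_\alpha(a_\lambda, \Lambda; h) + \alpha^{1-d}\tr D_\alpha(a_\lambda, \Lambda; G_\lambda).
\end{equation*}
For the first summand I would apply Theorem \ref{main:thm} to $h$ with $X=\{0\}$. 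Although $h$ fails \eqref{fmain:eq} with $\gamma=1$ because of the logarithm, on any bounded interval it satisfies \eqref{fmain:eq} for every $\gamma\in(0,1)$ (observe $|h(t)|\lesssim|t|^{1-\varepsilon}$, $|h'(t)|\lesssim|t|^{-\varepsilon}$, $|h''(t)|=|t|^{-1}$). Since $\beta > d = d\varkappa^{-1}|_{\varkappa=1}$, one can choose $\varepsilon>0$ small enough that $\beta > d/(1-\varepsilon)$ still holds. After multiplying $h$ by a smooth cutoff whose support contains the (uniformly bounded) spectra of the operators $W_\alpha(a_\lambda;\Lambda)$, the uniform version of \eqref{main:eq} gives
\begin{equation*}
\alpha^{1-d}\tr D_\alpha(a_\lambda, \Lambda; h) \longrightarrow \CB_d(a_\lambda, \partial\Lambda; h) \qquad (\alpha\to\infty),
\end{equation*}
uniformly in $\lambda$; a separate continuity argument for $\CB_d$ in its symbol argument (a routine dominated-convergence use of the properties of $\CB_d$ established in Section~3, based on $a_\lambda\to a_0$ pointwise with uniform decay) then lets me replace $a_\lambda$ by $a_0$.

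The remainder term is handled by a standard ``closing'' argument. Condition \eqref{almosthomlog:eq} applied to $\tilde f$ translates into
\begin{equation*}
|G_\lambda^{(k)}(s)| \le \epsilon(\lambda, R)\,|s|^{1-k}, \qquad k=0,1,2,\ |s|\le R,
\end{equation*}
with $\epsilon(\lambda,R)\to 0$ as $\lambda\to 0$ for every fixed $R$. Choosing $R$ large enough to enclose the spectra of all $W_\alpha(a_\lambda;\Lambda)$ and invoking the trace-norm bound \eqref{crelle2:eq} from Section~4, whose constants depend on $G_\lambda$ only through its $C^2$-type seminorms on a bounded interval, I obtain $|\alpha^{1-d}\tr D_\alpha(a_\lambda, \Lambda; G_\lambda)| \lesssim \epsilon(\lambda, R)\to 0$ uniformly in $\alpha$, which completes the argument.

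The main obstacle is extracting enough \emph{uniformity} from the underlying tools: both \eqref{main:eq} and \eqref{crelle2:eq} must depend on the test function only through finitely many seminorms on a fixed bounded interval, and they must be uniform in the family $\{a_\lambda\}$ as $\lambda$ varies. This is precisely what the ``uniform in symbols $a$'' clause of Theorem \ref{main:thm} and the structural form of \eqref{crelle2:eq} are designed to provide; once they are in hand, the proof reduces to the algebraic bookkeeping around the logarithmic scaling of $h$ sketched above.
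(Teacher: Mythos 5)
Your proposal is correct and follows essentially the same route as the paper: split off the logarithmically-homogeneous part $h$, use the identity $h(\lambda t)=\lambda h(t)-(\lambda\log\lambda)t$ together with Remark \ref{lin:rem} to reduce $D_\alpha(\lambda a_\lambda,\Lambda;h)$ to $\lambda D_\alpha(a_\lambda,\Lambda;h)$, apply Theorem \ref{main:thm} (with $h$ satisfying \eqref{fmain:eq} for every $\gamma<1$) plus the symbol-continuity of $\CB_d$ (Corollary \ref{cbconvf:cor}), and control the remainder $G_\lambda$ via the trace-norm bound \eqref{crelle2:eq}, noting that \eqref{almosthomlog:eq} forces the relevant seminorm $\1 G_\lambda\zeta\1_2\to 0$. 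The paper organizes the algebra slightly differently (rescaling to $f_\lambda=\lambda^{-1}f(\lambda\cdot)$ and splitting $f_\lambda=h_\lambda+g_\lambda$ rather than splitting $f=h+\tilde f$ first), but the two bookkeepings are equivalent.
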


We do not discuss applications of Theorems 
\ref{homo:thm} and \ref{homolog:thm}, but observe nevertheless that 
the entropy functions \eqref{eta_gamma:eq} and \eqref{eta1:eq} 
satisfy the conditions \eqref{almosthom:eq} and \eqref{almosthomlog:eq} 
respectively.

\subsection{Entanglement Entropy}
Here we briefly explain how Theorem \ref{main:thm} applies to the study of 
the entanglement entropy. 
More detailed discussion of the 
subject can be found in \cite{LeschkeSobolevSpitzer2014}, 
\cite{LeschkeSobolevSpitzer2016}, \cite{Leschke2016}. 

We consider the operator \eqref{Dalpha:eq} with the \textit{Fermi symbol}
\begin{equation}\label{positiveT:eq} 
a(\bxi) := a_{T, \mu}(\bxi) 
:= \frac{1}{1+ \exp \frac{h(\bxi) - \mu}{T}}\,,\quad \bxi\in\R^d,
\end{equation}
where $T>0$ is the temperature and $\mu\in\R$ is the chemical potential. 
The function $h\in \plainC\infty(\R^d)$ is the free (one-particle) 
Hamiltonian, and we assume that 
$h(\bxi)\gtrsim |\bxi|^{\b_1}$ as $|\bxi|\to \infty$ with some $\b_1 >0$, 
so that $a$ decays fast at infinity, and 
that $|\nabla^m h(\bxi)|\lesssim \lu \bxi\ru^{\b_2}$, $m= 0, 1, \dots$ with some $\b_2>0$. 
This ensures that \eqref{positiveT:eq} satisfies \eqref{abounds:eq} with an arbitrary 
$\b >0$.
The parameters $T$ and $\mu$  
are fixed.
For the function $f$ we pick the 
\textit{$\g$-R\'enyi entropy function} 
$\eta_\g: \R\mapsto [0, \infty)$ 
defined for all $\g >0$ as follows. If $\g\not = 1$, then
\begin{equation}\label{eta_gamma:eq}
\eta_\gamma(t) := \left\{\begin{array}{ll} 
\frac{1}{1-\gamma} \log\big[t^\gamma + (1-t)^\gamma\big]& 
\mbox{ for }t\in(0,1),\\[0.2cm]
0&\mbox{ for }t\not\in(0,1),\end{array}\right.
\end{equation}
and for $\gamma=1$ (the von Neumann case) it is defined as the limit 
\begin{equation}\label{eta1:eq}
\eta_1(t) := \lim_{\gamma\to1} \eta_\gamma(t) = \left\{\begin{array}{ll} -t \log(t) -(1-t)\log(1-t)& \mbox{ for } t\in(0,1),\\[0.2cm]
0& \mbox{ for }t\not\in(0,1).\end{array}\right.
\end{equation} 
For $\g\not = 1$ the function $\eta_\g$ satisfies condition \eqref{fmain:eq} 
with $\g$ replaced with $\varkappa = \min\{\g, 1\}$, and with $X=\{0, 1\}$. 
The function $\eta_1$ satisfies \eqref{fmain:eq} with an 
arbitrary $\g\in (0, 1)$, 
and the same set $X$. 

For arbitrary $\L\subset\R^d$ 
we define 
\textit{the $\g$-R\'enyi entanglement entropy (EE)} with respect 
to the bipartition 
$\R^d = \L \cup (\R^d\setminus\L)$, as 
\bee \label{def:EE}
\mathrm{H}_\g(\L) =\mathrm{H}_\g(T,\mu; \L) 
= \tr D_1(a_{T,\mu},\L;\eta_\g) 
+ \tr D_1(a_{T,\mu},\R^d\setminus\L;\eta_\g). 
\ene 
These entropies were studied in 
\cite{LeschkeSobolevSpitzer2016}, \cite{Leschke2016}.
In particular, in \cite{Leschke2016} it was shown that for any $T>0$ 
the EE is finite, if $\L$ satisfies Condition \ref{domain:cond}. 
We are interested in the scaling limit 
of the EE, i.e. the limit 
of $\mathrm{H}_\g(\a\L)$ as $\a\to\infty$. 
The next theorem is a direct consequence 
of Theorem \ref{main:thm}:

\begin{thm}\label{EE:thm}
Assume that $\L$ is a piece-wise $\plainC1$-region satisfying 
Condition \ref{domain:cond}.
Let the symbol $a = a_{T, \mu}$ and the functions $\eta_\g, \g>0,$ 
be as defined in \eqref{positiveT:eq} and \eqref{eta_gamma:eq}-\eqref{eta1:eq} 
respectively. 
Then
\begin{align*}
\lim\limits_{\a\to\infty} 
\a^{1-d} \mathrm{H}_\g(\a\L) = 2\CB_d(a_{T, \mu}, \p\L; \eta_\g).
\end{align*}
\end{thm}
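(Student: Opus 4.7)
The plan is to reduce $\mathrm{H}_\g(\a\L)$ to a sum of two traces of the form $\tr D_\a(a,\cdot\,;\eta_\g)$ and then apply Theorem \ref{main:thm} to each piece separately. By the unitary scaling equivalence noted in the introduction, $D_\a(a,\L;f)$ is unitarily equivalent to $D_1(a,\a\L;f)$, so
\begin{equation*}
\mathrm{H}_\g(\a\L) = \tr D_\a(a_{T,\mu},\L;\eta_\g) + \tr D_\a(a_{T,\mu},\R^d\setminus\L;\eta_\g).
\end{equation*}
Here I also use that $\a(\R^d\setminus\L) = \R^d\setminus(\a\L)$ and that $\R^d\setminus\L$ is itself a piece-wise $\plainC1$-region (the excerpt observes in \S\ref{domains:subsect} that the interior of the complement of a Lipschitz/$\plainC{m}$ region is again of the same class), which moreover satisfies Condition \ref{domain:cond} since that condition is symmetric under complementation.

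Next I check the hypotheses of Theorem \ref{main:thm} for the symbol and the test function. The Fermi symbol $a_{T,\mu}$ defined in \eqref{positiveT:eq} is a real-valued $\plainC\infty$-function, and the assumptions on $h$ ensure that $a_{T,\mu}$ together with all its derivatives decays faster than any polynomial; thus \eqref{abounds:eq} holds for arbitrarily large $\b$. As noted right after \eqref{eta1:eq}, the R\'enyi entropy function $\eta_\g$ with $\g\ne 1$ satisfies \eqref{fmain:eq} at $X=\{0,1\}$ with exponent $\varkappa=\min\{\g,1\}$, while $\eta_1$ satisfies \eqref{fmain:eq} with the same $X$ for every $\g\in(0,1)$. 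In either case we can choose $\b$ large enough that $\b > d\varkappa^{-1}$ holds, so Theorem \ref{main:thm} applies to both $\L$ and $\R^d\setminus\L$, giving
\begin{align*}
\lim_{\a\to\infty}\a^{1-d}\tr D_\a(a_{T,\mu},\L;\eta_\g) &= \CB_d(a_{T,\mu},\p\L;\eta_\g),\\
\lim_{\a\to\infty}\a^{1-d}\tr D_\a(a_{T,\mu},\R^d\setminus\L;\eta_\g) &= \CB_d(a_{T,\mu},\p(\R^d\setminus\L);\eta_\g).
\end{align*}

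Finally, I need to check that the two boundary coefficients coincide. The boundaries $\p\L$ and $\p(\R^d\setminus\L)$ agree as surfaces, but the outward unit normals $\bn_\bx$ are opposite. Looking at \eqref{ad:eq}, a substitution $\bxi\mapsto \bxi+t\be$, $t\mapsto -t$ together with the manifest symmetry $U(s_1,s_2;g)=U(s_2,s_1;g)$ of the integrand defined in \eqref{U:eq} (replace $t$ by $1-t$ in the defining integral) shows that $\CA_d(a,\be;g)=\CA_d(a,-\be;g)$. Consequently, by \eqref{cbd:eq}, $\CB_d(a_{T,\mu},\p(\R^d\setminus\L);\eta_\g)=\CB_d(a_{T,\mu},\p\L;\eta_\g)$, and adding the two limits yields the factor of $2$ in the statement.

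There is no substantive obstacle: the argument is purely a bookkeeping reduction of the bipartite quantity to Theorem \ref{main:thm}. The only points requiring care are verifying that $\eta_\g$ genuinely fits the framework \eqref{fmain:eq} (handled uniformly by the $\varkappa$-notation, including the logarithmic von Neumann case $\g=1$), and confirming the reflection symmetry $\CA_d(a,\be;g)=\CA_d(a,-\be;g)$ that makes the contributions from $\L$ and its complement add rather than cancel.
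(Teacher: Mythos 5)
Your proposal is correct and follows the same route the paper intends when it calls Theorem~\ref{EE:thm} a ``direct consequence'' of Theorem~\ref{main:thm}: scale back to $D_\a$, apply the main theorem separately to $\L$ and to the interior of $\R^d\setminus\L$ (both piece-wise $\plainC1$ regions satisfying Condition~\ref{domain:cond}), and sum. Your explicit verification that $\CA_d(a,\be;g)=\CA_d(a,-\be;g)$ (via $t\mapsto -t$ in \eqref{ad:eq}, or equivalently $U(s_1,s_2;g)=U(s_2,s_1;g)$) so that the two boundary terms add rather than cancel is exactly the only nontrivial point, and you handle it correctly.
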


This result was stated in \cite[Theorem]{LeschkeSobolevSpitzer2016}, 
but the article \cite{LeschkeSobolevSpitzer2016} contained only a sketch of 
the proof. 

The EE can be also studied for the 
zero temperature, see \cite{LeschkeSobolevSpitzer2014}. In this case 
the Fermi symbol is naturally replaced 
by the indicator function of the region $\{\bxi\in\R^d: h(\bxi)<\mu\}$.

It is worth pointing out that 
it is also instructive to study the behaviour of $\mathrm{H}_\g(T, \mu; \a\L)$ 
as $\a\to\infty$ and $T\to 0 $ simultaneously. 
This study was undertaken in \cite{Leschke2016} (for $d = 1$) and 
\cite{Sobolev2016a} (for arbitrary $d\ge 2$). 
The results of \cite{Leschke2016} 
require $\a T\gtrsim 1$, $\a\to\infty$, so that, in particular, 
$T = const$ is allowed. 
On the contrary, in the paper \cite{Sobolev2016a}, where the multi-dimensional case 
was studied, both the final result and its proof always require that 
$\a\to\infty, T\to 0$. Thus, the results of \cite{Sobolev2016a}, together 
with Theorem \ref{EE:thm}, describe the large-scale asymptotic 
behaviour (i.e. as $\a\to\infty$) for the 
entire range of bounded temperatures (i.e. $T\lesssim 1$) for $d\ge 2$.

\section{Asymptotic coefficient $\CB_d$}\label{ascoeff:sect}

In this section we collect 
some useful properties of the coefficient $\CB_d$ 
in all dimensions $d\ge 1$.

\subsection{Smooth functions $g$.  
Estimates for the coefficient $\CB_d$} 

The following result is a basis for our asymptotic calculations:

 \begin{prop}\label{Widom_82:prop}
 	[see \cite[Theorem 1(a)]{Widom1982}]  
 	Suppose that $a$ is bounded and 
satisfies 
\begin{equation}\label{sobolev:eq}
\iint\frac{|a(\xi_1) - a(\xi_2)|^2}{|\xi_1-\xi_2|^2} d\xi_1 d\xi_2<\infty.
\end{equation}
Let $g$ be analytic on a 
 	neighbourhood of the closed convex hull of 
 	the function $a$. Then the operator $D_\a(a, \R_\pm; g)$ 
 	is trace class and 
 	\begin{equation}\label{Widom_82:eq}
 		\tr D_\a(a, \R_\pm; g)	= \CB_1(a; g).
 	\end{equation}
 \end{prop}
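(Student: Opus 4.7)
The first observation is that the operator $D_\a(a, \R_\pm; g)$ is unitarily equivalent to $D_1(a, \a\R_\pm; g) = D_1(a, \R_\pm; g)$, since $\R_\pm$ is invariant under positive dilations; consequently the trace does not depend on $\a$ and it suffices to work with $\a = 1$. Write $P_\pm = \chi_{\R_\pm}$. The basic Toeplitz--Hankel identity
\begin{equation*}
W_1(a) W_1(b) = W_1(ab) - P_+ \op(a) P_- \op(b) P_+,
\end{equation*}
valid because $P_+ + P_- = I$ and $\op(a)\op(b) = \op(ab)$ for pure Fourier multipliers, iterates to yield, for the monomial $g(t) = t^n$,
\begin{equation*}
D_1(a, \R_+; t^n) = W_1(a)^n - W_1(a^n) = -\sum_{k=0}^{n-1} W_1(a)^{n-1-k} \,P_+ \op(a) P_- \op(a^k) P_+ .
\end{equation*}
This is the algebraic backbone of the argument.

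For the trace-class claim, I would use that condition \eqref{sobolev:eq} is an $\plainH{1/2}$-type regularity requirement on $a$; a classical Fourier-side computation then shows that each Hankel-type factor $P_+ \op(c) P_-$ with $c$ a polynomial in $a$ is Hilbert--Schmidt, so every term in the identity above is trace class. The same kind of bound controls the resolvent $(\zeta - W_1(a; \R_\pm))^{-1}$ uniformly on a contour $\Gamma$ encircling the numerical range of $W_1(a; \R_\pm)$, which lies in the closed convex hull of the range of $a$; this lets me invoke the analytic functional calculus via
\begin{equation*}
g(W_1(a; \R_\pm)) = \frac{1}{2\pi i}\oint_{\Gamma} g(\zeta)\,(\zeta - W_1(a; \R_\pm))^{-1}\,d\zeta
\end{equation*}
to reduce the problem to resolvents, or equivalently to polynomial approximation of $g$.

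The analytic heart of the proof is then to compute the traces of the monomial terms and match them with $\CB_1(a; t^n)$. By cyclicity and Plancherel, the quantity $\tr[W_1(a)^{n-1-k} P_+ \op(a) P_- \op(a^k) P_+]$ reduces to a Fourier-side integral; summing over $k$ and symmetrizing in the two spectral variables $\xi, \xi + t$ should produce precisely the second-divided-difference integrand $U(a(\xi), a(\xi+t); t^n)/t^2$ that appears in \eqref{cb1:eq}. Once the identity $\tr D_1(a, \R_+; t^n) = \CB_1(a; t^n)$ is established for every monomial, the analogue for any analytic $g$ follows from the Cauchy integral above together with trace-norm bounds uniform in $\sup_\Gamma |g|$.

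The principal obstacle is this final matching step: translating the iterated Hilbert--Schmidt pairings into the specific symmetric divided-difference form \eqref{U:eq}. Individual terms in the sum can be only conditionally convergent at $t = 0$, and recovering the principal-value definition of $\CB_1$ requires careful symmetrization of the Fourier-side expression in $\xi$. A secondary difficulty is that the trace-norm bounds needed to pass from polynomials to analytic $g$ must be controlled independently of the polynomial degree, forcing one to exploit the Hilbert--Schmidt structure of the Hankel factors rather than crude operator-norm estimates.
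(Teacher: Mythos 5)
The paper does not prove this proposition: it is stated as a cited result, namely Theorem~1(a) of Widom's 1982 article, and the author simply invokes it as a black box throughout (for instance in the proof of Lemma~\ref{planeas:lem}). So there is no internal proof to compare against.

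That said, your sketch is structurally a faithful reconstruction of Widom's actual argument. The scale-invariance reduction to $\a=1$ is correct and is exactly what makes the formula exact rather than asymptotic for a half-line. The Toeplitz--Hankel commutator identity and its iteration giving
\begin{equation*}
W_1(a)^n - W_1(a^n) = -\sum_{k=0}^{n-1} W_1(a)^{n-1-k}\, P_+ \op(a) P_-\, \op(a^k) P_+
\end{equation*}
are the right algebraic backbone, and your trace-class argument is sound: condition \eqref{sobolev:eq} is equivalent to finiteness of $\int |t|\,|\hat a(t)|^2\,dt$, i.e.\ to the Hankel operator $P_-\op(a)P_+$ being Hilbert--Schmidt; since $s\mapsto s^k$ is Lipschitz on the (bounded) range of $a$, each $a^k$ inherits \eqref{sobolev:eq}, so each term is a product of two Hilbert--Schmidt Hankel factors sandwiched by bounded operators, hence trace class.

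The genuine gap is the one you yourself flag: the trace computation identifying $\tr D_1(a,\R_\pm;g)$ with the double integral $\CB_1(a;g)$ built from the second divided difference $U$ of \eqref{U:eq}. This is not a routine Plancherel calculation, and it is the entire content of Widom's paper. Two remarks on how Widom closes it, which you might find clarifying. First, he does not sum over monomial degrees and then pass to general $g$; he works directly with the resolvent $g_z(\lambda)=(\lambda-z)^{-1}$, for which $U(s_1,s_2;g_z)$ has a closed form, and obtains the general case by the Cauchy integral you wrote down. This sidesteps your worry about controlling trace norms uniformly in polynomial degree. Second, the "symmetrization in $\xi$ and $\xi+t$" you anticipate is not merely cosmetic: the individual monomial terms in your sum produce, on the Fourier side, expressions whose $t\to 0$ behaviour is singular term by term, and only the symmetric combination assembles into the absolutely convergent $U$-integrand. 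So the route you outline can be made to work, but the computational core you defer is precisely the non-trivial part and would need to be carried out in detail for the argument to be complete.
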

 
In fact the above asymptotics are known to 
hold under weaker conditions on the symbol 
$a$ and function $g$ (see 
\cite{Peller1989}), but Proposition 
\ref{Widom_82:prop} is sufficient for our purposes. 

Now we concentrate on estimates for the coefficient \eqref{cbd:eq}.
As observed in \cite{Widom1982}, 
 if $g$ is twice differentiable, we can integrate by parts in \eqref{U:eq} 
 to obtain the formula
 \begin{equation*}
 	U(s_1, s_2; g) = 
 	(s_1-s_2)^2 \int_0^1 g''\bigl(
 	(1-t)s_1 + t s_2
 	\bigr)
 	\bigl(
 	t\log t + (1-t) \log (1-t)
 	\bigr) dt.
 \end{equation*} 
 Thus, assuming that $g''$ is uniformly bounded, we arrive at the estimate
\begin{align}\label{cad:eq}
|\CA_d(a, \be; g)|
\lesssim \|g''\|_{\plainL\infty}\int\limits_{\R^d} \int\limits_{\R} 
\frac{|a(\bxi) - a(\bxi+t\be)|^2}{t^2}
dt d\bxi.
\end{align} 
For the sake of simplicity, further estimates are stated for 
symbols $a$ satisfying the bounds \eqref{abounds:eq}. 
Unless otherwise stated, all the estimates are uniform in the symbols $a$ satisfying \eqref{abounds:eq} 
with the same implicit constants. 

\begin{lem}
Suppose that 
$g\in\plainC2(\R)$ and $g''$ is bounded.  
Suppose that $a$ satisfies \eqref{abounds:eq} with some $\b > d/2$, $d\ge 2$, 
and 
that $\L$ satisfies Condition \ref{domain:cond}. 
Then 
\begin{align}\label{adest:eq}
|\CA_d(a, \be; g)|\lesssim \|g''\|_{\plainL\infty},
\end{align}
uniformly in $\be\in\mathbb S^{d-1}$, 
and 
\begin{align}\label{bdtwice:eq}
|\CB_d(a, \varphi; \p\L, g)|
\lesssim \|g''\|_{\plainL\infty} \|\varphi\|_{\plainL\infty}
\meas_{d-1}(\p\L\cap\supp\varphi),
\end{align}
for any continuous function $\varphi$. 

If, in addition, $g'$ is uniformly 
bounded and $\b >d$, then for all $\be, \bb\in\mathbb S^{d-1}$, we have 
\begin{align}\label{contca:eq}
|\CA_d(a, \be; g) - \CA_d(a, \bb; g)|
\lesssim (\|g'\|_{\plainL\infty} + 
\|g''\|_{\plainL\infty})|\be-\bb|^\d,
\end{align}
for any $\d\in (0, 1)$, with an implicit constant depending on $\d$. 
\end{lem}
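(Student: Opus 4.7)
For \eqref{adest:eq} I start from the estimate \eqref{cad:eq} that is already in the text and split the $t$-integral at $|t|=1$. On $|t|\le 1$, the Cauchy--Schwarz bound $|a(\bxi)-a(\bxi+t\be)|^2 \le |t|\int_0^{|t|}|\nabla a(\bxi+s\be)|^2\,ds$, together with Fubini and translation invariance of Lebesgue measure, cancels the factor $t^{-2}$ and reduces the double integral to $\lesssim\|\nabla a\|_{\plainL{2}}^2$, which is finite since $\b>d/2$ forces $\nabla a\in\plainL{2}(\Rd)$. On $|t|>1$, I use $|a(\bxi)-a(\bxi+t\be)|^2\le 2|a(\bxi)|^2+2|a(\bxi+t\be)|^2$, integrate in $\bxi$ to obtain $4\|a\|_{\plainL{2}}^2$, and then use $\int_{|t|>1}t^{-2}\,dt<\infty$. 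All estimates are uniform in $\be\in\mathbb S^{d-1}$ and in the class \eqref{abounds:eq}. Bound \eqref{bdtwice:eq} then follows immediately by inserting \eqref{adest:eq} into the definition \eqref{cbd:eq}.

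For \eqref{contca:eq} the core step is an interpolated pointwise estimate for $V(\bxi,t) := U(a(\bxi),a(\bxi+t\be);g) - U(a(\bxi),a(\bxi+t\bb);g)$. Writing $F(\tau;s_1,s_2) := g((1-\tau)s_1+\tau s_2) - (1-\tau)g(s_1) - \tau g(s_2)$, a direct computation gives $F(\tau;s_1,s_2) - F(\tau;s_1,s_2') = \tau(s_2-s_2')\int_0^1[g'(x_{\tau,\sigma}) - g'(y_\sigma)]\,d\sigma$, with $y_\sigma := (1-\sigma)s_2'+\sigma s_2$ and $x_{\tau,\sigma} := (1-\tau)s_1+\tau y_\sigma$, so that $|x_{\tau,\sigma}-y_\sigma|\le (1-\tau)(|s_1-s_2|+|s_1-s_2'|)$. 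Bounding the inner $g'$-difference by the interpolated estimate $|g'(x)-g'(y)|\le 2\|g'\|_{\plainL\infty}^{1-\d}\|g''\|_{\plainL\infty}^{\d}|x-y|^\d$, valid for any $\d\in(0,1)$, produces a factor $(1-\tau)^\d$; dividing by $\tau(1-\tau)$ leaves $(1-\tau)^{\d-1}$, integrable over $[0,1]$, and integration in $\tau$ yields $|U(s_1,s_2;g)-U(s_1,s_2';g)|\le (C/\d)(\|g'\|_{\plainL\infty}+\|g''\|_{\plainL\infty})(|s_1-s_2|+|s_1-s_2'|)^\d\,|s_2-s_2'|$.

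I then apply this with $s_1=a(\bxi)$, $s_2=a(\bxi+t\be)$, $s_2'=a(\bxi+t\bb)$ and integrate over $\bxi$ by H\"older's inequality with conjugate exponents $p=2/\d$, $q=2/(2-\d)$. The first paragraph yields $\|A_\be\|_{\plainL{2}_\bxi},\|A_\bb\|_{\plainL{2}_\bxi}\lesssim |t|$, hence $\|(|A_\be|+|A_\bb|)^\d\|_{\plainL{p}_\bxi}\lesssim |t|^\d$. For $B(\bxi,t):=a(\bxi+t\be)-a(\bxi+t\bb)$, the stronger hypothesis $\b>d$ gives $a,\nabla a\in\plainL{1}$, so $\|B\|_{\plainL{1}_\bxi}\lesssim \min(1,|t||\be-\bb|)$; combined with $\|B\|_{\plainL\infty}\lesssim 1$, the elementary inequality $|B|^{r}\lesssim|B|$ for $r\ge 1$ yields $\|B\|_{\plainL{q}_\bxi}\lesssim (\min(1,|t||\be-\bb|))^{(2-\d)/2}$. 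H\"older thus bounds the $\bxi$-integral by $|t|^\d(\min(1,|t||\be-\bb|))^{(2-\d)/2}$; splitting the remaining $t$-integral of $|t|^{\d-2}(\min(1,|t||\be-\bb|))^{(2-\d)/2}$ at $|t|=1/|\be-\bb|$ produces $\lesssim|\be-\bb|^{1-\d}$ from each region. Renaming $1-\d$ as $\d$ yields \eqref{contca:eq} for any $\d\in(0,1)$.

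The main obstacle is the interpolated pointwise bound on $U(s_1,s_2;g)-U(s_1,s_2';g)$. Using only $\|g'\|_{\plainL\infty}$ to bound the inner $g'$-difference produces the non-integrable singularity $(1-\tau)^{-1}$ in the $\tau$-integral defining $U$, while using only $\|g''\|_{\plainL\infty}$ gives a quadratic factor $(|s_1-s_2|+|s_1-s_2'|)|s_2-s_2'|$ that translates, after $\bxi$-integration, into $|t|^2$ growth and no genuine H\"older decay in $|\be-\bb|$. Interpolating inside the $g'$-difference — rather than between two completed bounds — simultaneously tames the singularity near $\tau=1$ and the singularity at $t=0$, and is the crux of the argument.
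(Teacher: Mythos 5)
Your proof of \eqref{adest:eq} and \eqref{bdtwice:eq} is correct and matches the paper's in spirit (split the $t$-integral at $|t|=1$, control the near-diagonal piece via the gradient and the tail via boundedness in $L^2$), although you use Cauchy--Schwarz plus Fubini/translation invariance where the paper uses the elementary pointwise bound $|a(\bxi)-a(\bxi+t\be)|\lesssim|t|\langle\bxi\rangle^{-\b}$ and then integrates. Both versions need only $2\b>d$.

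Your argument for \eqref{contca:eq} is also correct, but it is a genuinely different route. The paper splits the $t$-axis into \emph{three} regions $|t|<r$, $r<|t|<R$, $|t|>R$, uses only the weak $\d$-H\"older bound $|U(s_1,s_2;g)-U(r_1,r_2;g)|\lesssim\|g'\|_\infty(|s_1-r_1|^\d+|s_2-r_2|^\d)$ on the middle piece, crude $\|g''\|_\infty$-bounds on the outer pieces, and finishes by optimizing over the two parameters $r,R$ (which is what produces the $\d^2$-exponent that is then renamed). You instead prove a \emph{sharper} pointwise estimate on $U$, namely
\[
|U(s_1,s_2;g)-U(s_1,s_2';g)|\lesssim_\d(\|g'\|_\infty+\|g''\|_\infty)\,(|s_1-s_2|+|s_1-s_2'|)^\d\,|s_2-s_2'|,
\]
by interpolating inside the $\tau$-integral between $\|g'\|_\infty$ and the $\|g''\|_\infty$-mean-value bound. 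This superlinear estimate carries a full factor of $|s_2-s_2'|$, and you then match the two competing powers via H\"older's inequality in $\bxi$ with exponents $2/\d$ and $2/(2-\d)$, exploiting $a,\nabla a\in\plainL{1}\cap\plainL{2}$ (hence $\b>d$) to bound the resulting $L^{2/\d}$- and $L^{2/(2-\d)}$-norms; a \emph{two}-region split of the $t$-integral at $|t|=|\be-\bb|^{-1}$ then finishes. The trade-off is roughly this: the paper's argument is more elementary (no interpolation inside the $\tau$-integral, no $L^p$-duality in $\bxi$) but needs a more elaborate three-piece decomposition and a two-parameter optimization; your argument front-loads the work into a stronger $U$-estimate, after which the $t$-integration is mechanical. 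Both give the same conclusion for arbitrary $\d\in(0,1)$.

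Two cosmetic remarks. First, you write $\|A_\be\|_{\plainL{2}_\bxi}\lesssim|t|$; this is valid for all $t$ only because for $|t|>1$ the left side is $\lesssim1\le|t|$, and it would be slightly cleaner to use $\min(1,|t|)$, though your final $t$-integral converges either way. Second, your interpolated bound produces the coefficient $\|g'\|_\infty^{1-\d}\|g''\|_\infty^{\d}$; Young's inequality converts this to $\|g'\|_\infty+\|g''\|_\infty$, which you implicitly use and should state if this were being written out in full.
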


\begin{proof} 
The bound \eqref{bdtwice:eq} follows from \eqref{adest:eq} in view of the definition \eqref{cbd:eq}.
Let us prove \eqref{adest:eq}. 
Let $r\in (0, 1)$, and assume that $|t|\le r$.  
Write the elementary bound 
\begin{align}\label{elem:eq}
|a(\bxi) - a(\bxi+t\be)|\le |t| \max\limits_{|\boldeta-\bxi|\le 1} 
|\nabla a(\boldeta)|\lesssim |t| \lu \bxi\ru^{-\b}.
\end{align}
Thus the right-hand side of \eqref{cad:eq} (with $\|g''\|_{\plainL\infty}$ omitted)
is bounded by 
\begin{align}\label{elem1:eq}
\int\limits_{\R^d} \int\limits_{|t|<1}
&\ \frac{|a(\bxi) - a(\bxi+t\be)|^2}{t^2}
dt d\bxi
+ \int\limits_{\R^d} \int\limits_{|t|>1} 
\frac{|a(\bxi) - a(\bxi+t\be)|^2}{t^2}
dt d\bxi\notag\\[0.2cm]
&\ \lesssim \int\limits_{\R^d} 
\lu \bxi\ru^{-2\b}\,
d\bxi 
+ \int\limits_{\R^d} \lu \bxi\ru^{-2\b}
\int\limits_{|t|>1} \frac{1}{t^2} dt d\bxi\lesssim 1.   
\end{align}
By \eqref{cad:eq} this leads to \eqref{adest:eq}.

Let us prove \eqref{contca:eq}. 
For arbitrary $r\in (0, 1), R>1$, split 
$\CA_d(\be) = \CA_d(a, \be; g)$ into three terms:
\begin{align*}
8\pi^2\CA_d(\be) = \CK_1(\be; r) + \CK_2(\be; r, R) + \CK_3(\be; R), 
\end{align*} 
with
\begin{align*}
\CK_1(\be; r) =  &\ \int\limits_{\R^d} \int\limits_{|t|< r} 
\frac{U\bigl(a(\bxi), a(\bxi + t\be); g\bigr)}{t^2}\,
dt  d\bxi,
\\[0.2cm]
\CK_2(\be; r, R) =  &\ \int\limits_{\R^d} \int\limits_{r<|t|< R} 
\frac{U\bigl(a(\bxi), a(\bxi + t\be); g\bigr)}{t^2}\,
dt  d\bxi,
\\[0.2cm]
\CK_3(\be; R) =  &\ \int\limits_{\R^d} \int\limits_{|t|> R} 
\frac{U\bigl(a(\bxi), a(\bxi + t\be); g\bigr)}{t^2}\, 
dt  d\bxi.
\end{align*} 
Similarly to the first step of the proof,
\begin{align*}
|\CK_1(\be; r)|\lesssim r \|g''\|_{\plainL\infty} \int\limits_{\R^d}
\max\limits_{|\boldeta-\bxi|\le r} 
|\nabla a(\boldeta)|^2 d\bxi  \lesssim r \|g''\|_{\plainL\infty},
\end{align*} 
and 
\begin{align*}
|\CK_3(\be; R)|\lesssim \|g''\|_{\plainL\infty}\int\limits_{\R^d} |a(\bxi)|^2 
\int\limits_{|t|>R}\frac{1}{t^2}\,dt d\bxi
\lesssim \frac{1}{R} \|g''\|_{\plainL\infty}.
\end{align*} 
In order to estimate the middle integral, 
i.e. $\CK_2$, we point out the following elementary  
estimate:
 \begin{equation}\label{approx1:eq}
|U(s_1, s_2; g) - U(r_1, r_2; g)|\lesssim  \|g'\|_{\plainL\infty}
\bigl(|s_1-r_1|^\d + |s_2-r_2|^\d\bigr),\ \forall \d\in (0, 1),
\end{equation}
with an implicit constant depending on $\d$.  
Substituting $s_1=r_1 = a(\bxi)$ and 
$s_2 = a(\bxi + t\be),\ r_2 = a(\bxi+t\bb)$, and using 
\eqref{elem:eq}, we can estimate as follows:
\begin{align*}
|U(s_1, s_2; g) - U(r_1, r_2; g)|
\lesssim &\ \|g'\|_{\plainL\infty} |a(\bxi + t\be) - a(\bxi+t\bb)|^\d\\[0.2cm]
\lesssim &\ \|g'\|_{\plainL\infty} |t|^\d|\be-\bb|^\d \lu \bxi+t\be\ru^{-\b\d}.
\end{align*}
 Taking $\d\in (0, 1)$ such that $\b\d >d$, we obtain
 \begin{align*}
| \CK_2(\be; r, R) - \CK_2(\bb; r, R)|
 \lesssim &\ \|g'\|_{\plainL\infty} |\be-\bb|^\d
 \int\limits_{r<|t|<R}|t|^{\d-2}
 \int\limits_{\R^d}  \lu\bxi+t\be\ru^{-\b\d}\, d\bxi dt\\[0.2cm]
 \lesssim &\ \|g'\|_{\plainL\infty}|\be-\bb|^\d r^{\d-1}.
 \end{align*}
 Collecting the bounds together, we get: 
 \begin{align*}
| \CA_d(\be) - \CA_d(\bb)|
\lesssim &\ |\CK_1(\be; r)| + |\CK_1(\bb; r)|\\[0.2cm] 
&\ + |\CK_3(\be; R)|+ |\CK_3(\bb; R)|  + 
|\CK_2(\be; r, R) - \CK_2(\bb; r, R)|\\[0.2cm]
\lesssim &\ (\|g'\|_{\plainL\infty}+\|g''\|_{\plainL\infty})
(r + R^{-1} + |\be-\bb|^\d r^{\d-1}).
 \end{align*}
 Take $r = |\be-\bb|^\d$, $R^{-1} = |\be-\bb|$, so 
 that the last bracket is bounded by $|\be-\bb|^{\d^2}$. Re-denote $\d^2\mapsto \d$.
The proof of \eqref{contca:eq} is complete. 
\end{proof}

\subsection{Non-smooth test functions} \label{non_smooth:subsect}
For  functions $f$, satisfying Condition 
\ref{f:cond}, 
 the coefficient $\CB_1(a; f)$ 
was studied in \cite{Sobolev2016b}. 
In order to use the results of \cite{Sobolev2016b} we need to recall the notion of 
\textit{multi-scale symbols}. 
Consider a $\plainC\infty$-symbol $a(\bxi)$ 
for which there exist
 positive continuous functions 
$v = v(\bxi)$ and $\tau = \tau(\bxi)$,  
such that
\begin{equation}\label{scales:eq}
|\nabla_{\bxi}^k a(\bxi)|\lesssim 
\tau(\bxi)^{-k} v(\bxi),\ k = 0, 1, \dots,\quad \bxi\in\R^d.
\end{equation} 
It is natural to call $\tau$ the \textit{scale (function)} 
and $v$ the \textit{amplitude (function)}. 
We  refer to symbols 
$a$ satisfying \eqref{scales:eq} as \textit{multi-scale} symbols.
It is convenient to introduce the notation 
\begin{equation}\label{Vsigma:eq}
V_{\s, \rho}(v, \tau)  := \int \frac{v(\bxi)^\s}{\tau(\bxi)^\rho}d\bxi, \ 
\s>0, \rho\in\R.
\end{equation}
Apart from the continuity we often need some extra conditions on the scale 
and the amplitude. First we assume that $\tau$ is globally Lipschitz,
that is,
\begin{equation}\label{Lip:eq}
|\tau(\bxi) - \tau(\boldeta)| \le \nu |\bxi-\boldeta|,\ \ \bxi,\boldeta\in\R^d,
\end{equation}
 with some $\nu>0$. 
By adjusting the implicit constants in \eqref{scales:eq} we may assume that 
$\nu<1$. It is straightforward to check that 
\begin{equation}\label{dve:eq}
(1+\nu)^{-1}\le 
\frac{\tau(\bxi)}{\tau(\boldeta)} 
\le (1-\nu)^{-1},\ \ \boldeta\in B\bigl(\bxi, \tau(\bxi)\bigr).
\end{equation}
Under this assumption on the scale 
$\tau$, the amplitude $v$ is assumed to satisfy the bounds
\begin{equation}\label{w:eq}
\frac{v(\boldeta)}{v(\bxi)}\asymp 1,\ \boldeta\in B\bigl(\bxi, \tau(\bxi)\bigr).
\end{equation}
If $a$ satisfies \eqref{abounds:eq}, then it can be viewed as 
a multi-scale symbol with 
\begin{align}\label{power:eq}
v(\bxi) = \lu \bxi\ru^{-\b},\ \tau(\bxi) = 1,\ \bxi\in\R^d,
\end{align}
so that 
\begin{align*}
V_{\s, \rho}(v, \tau) \asymp 1,\ \forall \s> d\b^{-1}, \forall \rho\in\R.
\end{align*}
For the next statements recall the definition \eqref{vark:eq}.

\begin{prop}\label{scales:prop}[\cite[Theorem 6.1]{Sobolev2016b}]
Suppose that $f$ satisfies Condition \ref{f:cond} with $n = 2$, 
$\g >0$ and some $R > 0$. 
Let the symbol 
$a\in\plainC\infty(\R)$ be a multi-scale symbol. 
Then for any $\s\in (0, \varkappa]$ 
we have 
\begin{equation}\label{coeffscales:eq}
|\CB_1(a; f)|\lesssim \1 f\1_2 R^{\g-\s} V_{\s, 1}(v, \tau), 
\end{equation}
with a constant independent of $f$, 
uniformly in the functions $\tau, v$, 
and the symbol $a$.
\end{prop}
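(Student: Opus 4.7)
The plan is to bound $|\CB_1(a;f)|$ by a pointwise estimate on the integrand in \eqref{cb1:eq}, then exploit the multi-scale structure \eqref{scales:eq}. The central pointwise bound I would prove is
\[
|U(s_1, s_2; f)| \lesssim \1 f\1_2\, R^{\g-\s}\, |s_1-s_2|^\s, \quad \s \in (0, \varkappa].
\]
Its derivation relies on an antecedent Hölder-type bound for $f$ itself, namely $|f(s_1) - f(s_2)| \lesssim \1 f\1_2\, R^{\g-\s}\, |s_1-s_2|^\s$ for all $s_1, s_2 \in \R$. This follows by combining $\supp f \subset [x_0-R, x_0+R]$ with the derivative estimates in Condition~\ref{f:cond}: for $\g \le 1$, integration of $|f'(y)| \lesssim \1 f\1_2 |y-x_0|^{\g-1}$ yields the pure $\g$-Hölder bound, which interpolates against the support size to produce the factor $R^{\g-\s}$; for $\g > 1$, $|f'|$ is bounded by $\1 f\1_2 R^{\g-1}$ on the support and vanishes outside, giving a Lipschitz estimate that interpolates in the same way. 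Inserting this into the definition \eqref{U:eq} and splitting the numerator as $(1-t)[f(s_t)-f(s_1)] + t[f(s_t)-f(s_2)]$ with $s_t = (1-t)s_1+ts_2$ yields the stated bound on $U$.

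With $s_1 = a(\xi)$, $s_2 = a(\xi+t)$ substituted into \eqref{cb1:eq}, I split the $t$-integration at the scale $|t| = \tau(\xi)$. For $|t| > \tau(\xi)$, the bound $|a(\xi)-a(\xi+t)| \lesssim v(\xi)$ combined with $\int_{|t|>\tau(\xi)} t^{-2}\, dt \lesssim \tau(\xi)^{-1}$ gives a contribution $\lesssim \1 f\1_2\, R^{\g-\s}\, v(\xi)^\s /\tau(\xi)$, whose $\xi$-integral reproduces $V_{\s,1}(v,\tau)$. For $|t| \le \tau(\xi)$, the pointwise bound alone leads to $\int |t|^{\s-2}\, dt$, which diverges at $t=0$ when $\s \le 1$. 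Here the quadratic vanishing $U = O(|s_1-s_2|^2)$ encoded in the second-derivative representation
\[
U(s_1, s_2; f) = (s_1-s_2)^2 \int_0^1 f''(s_t)\bigl[t\log t+(1-t)\log(1-t)\bigr]\, dt
\]
must be exploited. A case split according to whether the segment $[a(\xi), a(\xi+t)]$ avoids a neighbourhood of $x_0$ of radius comparable to $v(\xi)|t|/\tau(\xi)$ (apply the $C^2$-formula with $|f''|\lesssim \1 f\1_2 |y-x_0|^{\g-2}$ along the segment, together with $|a(\xi)-a(\xi+t)|^2 \lesssim v(\xi)^2 t^2 /\tau(\xi)^2$) or not (apply the pointwise bound on a small residual set, balanced against the principal-value cancellation from the symmetry $(\xi,t) \leftrightarrow (\xi + t, -t)$) again yields the contribution $\lesssim \1 f\1_2\, R^{\g-\s}\, v(\xi)^\s/\tau(\xi)$.

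The main obstacle is this small-$|t|$ estimate. The Hölder-type and $C^2$-type bounds for $U$ must be blended coherently across the dichotomy ``$a(\xi)$ close to / far from $x_0$'', uniformly in $\xi$, and the principal-value prescription in \eqref{cb1:eq} must be invoked when $\g$ is small, precisely because the integrand is not absolutely integrable in that regime. The Lipschitz hypothesis \eqref{Lip:eq} on $\tau$ and the near-constancy \eqref{w:eq} of $v$ on balls of radius $\tau(\xi)$ guarantee that these local estimates glue together without loss. Once the pointwise bound is in place, the multi-scale weight $V_{\s,1}(v,\tau)$ emerges from the definition \eqref{Vsigma:eq} in a routine manner, completing the proof.
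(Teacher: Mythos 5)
The paper does not prove this proposition. It is quoted verbatim as \cite[Theorem~6.1]{Sobolev2016b}, so there is no argument here to compare your sketch against; one can only judge it on its own merits.

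Your first two steps are sound. The bound $|f(s_1)-f(s_2)|\lesssim\1 f\1_2 R^{\g-\s}|s_1-s_2|^\s$ for $\s\in(0,\varkappa]$ is correct (Hölder from the $f'$ estimate for $\g\le1$, Lipschitz via $\|f'\|_\infty\lesssim\1 f\1_2 R^{\g-1}$ for $\g>1$, interpolated against the support size), and inserting it into \eqref{U:eq} via the split $(1-t)[f(s_t)-f(s_1)]+t[f(s_t)-f(s_2)]$ gives $|U(s_1,s_2;f)|\lesssim\1 f\1_2 R^{\g-\s}|s_1-s_2|^\s$ with the convergent weight $\int_0^1(t^{\s-1}+(1-t)^{\s-1})\,dt=2/\s$. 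The region $|t|>\tau(\xi)$ then produces $V_{\s,1}(v,\tau)$ after the bound $|a(\xi)-a(\xi+t)|^\s\le |a(\xi)|^\s+|a(\xi+t)|^\s\lesssim v(\xi)^\s+v(\xi+t)^\s$ and a change of variables $\eta=\xi+t$ for the second piece, using the Lipschitz property \eqref{Lip:eq} of $\tau$ to convert the constraint $|t|>\tau(\eta-t)$ into $|t|\gtrsim\tau(\eta)$ — a detail you omit, but it is routine.

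The gap is in the small-$t$ regime, which you rightly flag as ``the main obstacle'' but do not resolve. Two concrete problems. First, the symmetry you propose to exploit, $(\xi,t)\leftrightarrow(\xi+t,-t)$, cannot produce a principal-value cancellation: $U$ is symmetric in its first two arguments (substitute $t\mapsto 1-t$ in \eqref{U:eq}) and $t^2$ is even, so the integrand $U(a(\xi),a(\xi+t);f)/t^2$ is \emph{invariant}, not sign-reversing, under this map. Invoking it to ``balance'' a residual contribution is a misconception. Second, your dichotomy between the segment $[a(\xi),a(\xi+t)]$ staying away from $x_0$ and hitting it does isolate where the $\plainC2$-representation of $U$ helps, but carrying out the two halves still leaves a $\xi$-integral that concentrates near zeros of $a(\cdot)-x_0$, roughly $\int v\tau^{-1}|a(\xi)-x_0|^{\s-1}\,d\xi$, and it is not clear this is controlled by $V_{\s,1}(v,\tau)$ for all $\s\in(0,\varkappa]$ without assumptions on the zero set of $a-x_0$ (degenerate zeros would spoil it). This is precisely the regime where the principal-value prescription is genuinely needed, so the cancellation mechanism must be identified correctly — and it is not the one you name. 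As written, the proposal stops short of a proof at exactly the point where the proposition is nontrivial.
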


\begin{cor} 
Let the function $f$ be as in Proposition 
\ref{scales:prop}, and let $\L$ satisfy Condition \ref{domain:cond}. 
Let the symbol 
$a\in\plainC\infty(\R^d), d\ge 2$,  
be a real-valued symbol satisfying 
\eqref{abounds:eq} with $\b > ~d\varkappa^{-1}$.
Then the coefficient $\CB_d(a, \varphi; \p\L, f)$ 
in \eqref{cbd:eq} is well-defined. Moreover, 
for any $\s\in (d\b^{-1}, \varkappa]$ 
it satisfies the bound 
\begin{equation}\label{coeffd:eq}
|\CB_d(a,\ \varphi; \p\L, f)|\lesssim \1 f\1_2 
\|\varphi\|_{\plainL\infty}
\meas_{d-1}(\p\L\cap\supp\varphi) 
R^{\g-\s},
\end{equation}
with an implicit constant independent of the functions 
$f$, $\varphi$, and the region $\L$.
\end{cor}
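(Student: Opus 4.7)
The plan is to reduce the $d$-dimensional coefficient to the one-dimensional case via the identity \eqref{cbd1d:eq}, and then invoke Proposition \ref{scales:prop}.

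Fix a unit vector $\be\in\mathbb S^{d-1}$ and introduce orthogonal coordinates $\bxi = (\hat\bxi, t)$ with $\hat\bxi\in \Pi_\be$. For each fixed $\hat\bxi$ regard $a(\hat\bxi, \cdot)$ as a one-variable $\plainC\infty$-symbol. By \eqref{abounds:eq}, for every $k\ge 0$,
\begin{equation*}
|\p_t^k a(\hat\bxi, t)|\lesssim \lu(\hat\bxi, t)\ru^{-\b},
\end{equation*}
so $a(\hat\bxi, \cdot)$ is a multi-scale symbol in the sense of \eqref{scales:eq} with constant scale $\tau\equiv 1$ and amplitude $v(t) = \lu(\hat\bxi, t)\ru^{-\b}$, uniformly in the parameter $\hat\bxi$. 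Proposition \ref{scales:prop} then gives, for every $\s\in (0,\varkappa]$,
\begin{equation*}
|\CB_1(a(\hat\bxi, \cdot); f)|\lesssim \1 f\1_2 R^{\g-\s} \int_\R \lu(\hat\bxi, t)\ru^{-\b\s}\, dt,
\end{equation*}
with implicit constant independent of $\hat\bxi$ (and of $\be$, since the constants in the multi-scale bounds do not depend on the coordinate system).

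Next I would integrate this estimate over the hyperplane $\Pi_\be$, using the formula \eqref{cbd1d:eq}. Fubini gives
\begin{equation*}
\int_{\Pi_\be}\int_\R \lu(\hat\bxi, t)\ru^{-\b\s}\,dt\,d\hat\bxi = \int_{\R^d}\lu\bxi\ru^{-\b\s}\,d\bxi,
\end{equation*}
which is finite precisely when $\b\s > d$. The hypothesis $\b > d\varkappa^{-1}$ ensures that the interval $(d\b^{-1}, \varkappa]$ is non-empty, so such $\s$ exists. For any such $\s$,
\begin{equation*}
|\CA_d(a, \be; f)|\lesssim \1 f\1_2 R^{\g-\s},
\end{equation*}
uniformly in $\be\in \mathbb S^{d-1}$. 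A dominated convergence argument based on this uniform bound (and the existence of the one-dimensional principal value limit from Proposition \ref{scales:prop}) justifies interchanging the $\epsilon\downarrow 0$ limit with the $\hat\bxi$-integral, showing that the principal value \eqref{ad:eq} exists as an honest limit, and hence that $\CB_d(a, \varphi; \p\L, f)$ in \eqref{cbd:eq} is well-defined.

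Finally, inserting the uniform estimate for $\CA_d(a, \bn_\bx; f)$ into the definition \eqref{cbd:eq} and pulling $\varphi$ out yields
\begin{equation*}
|\CB_d(a, \varphi; \p\L, f)|\lesssim \|\varphi\|_{\plainL\infty}\meas_{d-1}(\p\L\cap\supp\varphi)\, \1 f\1_2 R^{\g-\s},
\end{equation*}
which is \eqref{coeffd:eq}. The only nontrivial step is the reduction to the one-dimensional coefficient and the verification that Proposition \ref{scales:prop} applies uniformly in the slice parameter $\hat\bxi$; once that is in place, the corollary follows by direct integration.
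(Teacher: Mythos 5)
Your proof follows the same route as the paper's: reduce via \eqref{cbd1d:eq} to the one-dimensional coefficient $\CB_1(a(\hat\bxi,\cdot);f)$, view the restricted symbol as multi-scale with $\tau\equiv1$ and amplitude $\lu(\hat\bxi,t)\ru^{-\b}$, invoke Proposition \ref{scales:prop}, and integrate over the hyperplane using $\b\s>d$. The only cosmetic difference is that you apply Fubini directly to obtain $\int_{\R^d}\lu\bxi\ru^{-\b\s}d\bxi$ whereas the paper first records $V_{\s,1}(v_{\hat\bxi},\tau)\lesssim\lu\hat\bxi\ru^{-\s\b+1}$ and then integrates in $\hat\bxi$; these are the same computation, and your extra remark about dominated convergence justifying the existence of the principal value is a welcome clarification of a point the paper leaves implicit.
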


\begin{proof} 
By the definition \eqref{cbd:eq} it suffices to prove 
that
\begin{align*}
|\CA_d(a, \be; f)|\lesssim \1 f\1_2  
R^{\g-\s},
\end{align*}
uniformly in $\be\in\mathbb S^{d-1}$. Choose the coordinates in such a way that 
$\be = (0, \dots, 0, 1)$, and represent $\bxi\in\R^d$ as $\bxi = (\hat\bxi, \xi_d)$. 
Thus by \eqref{cbd1d:eq},  
\begin{align}\label{fl:eq}
\CA_d(a, \be; f) = \int\limits_{\R^{d-1}} 
\CB_1\big(a(\hat\bxi,\ \cdot\ ); f\big) d\hat\bxi.
\end{align}
By \eqref{abounds:eq}, 
the symbol $a(\hat\bxi,\ \cdot\ )$ satisfies \eqref{scales:eq} 
with 
\begin{align*}
v_{\hat\bxi}(t) = (1+|\hat\bxi|^2 + t^2)^{-\frac{\b}{2}},\ \tau(t) = 1,\ 
\forall t\in\R. 
\end{align*}
It is immediate that
\begin{align*}
V_{\s, \rho} \big(v_{\hat\bxi}, \tau\big)
\lesssim \lu \hat\bxi\ru^{-\s\b+1},\ \forall \rho\in\R, 
\end{align*}
and hence, by \eqref{coeffscales:eq} and \eqref{fl:eq},  
\begin{align*}
|\CA_d(a, \be; f)|\lesssim \1 f\1_2 R^{\g-\s}\int\limits_{\R^{d-1}} 
\lu \hat\bxi\ru^{-\s\b+1} 
d\hat\bxi\lesssim \1 f\1_2 R^{\g-\s},
\end{align*}
under the assumption that $\s\b > d$. This gives the required bound. 
\end{proof}
 
Let us also establish the continuity of the asymptotic coefficient 
$\CB_d$ in the functional parameter $a$:

\begin{cor}\label{cbconvf:cor}
Let the function $f$ be as in Proposition 
\ref{scales:prop}, and let $\L$ satisfy Condition \ref{domain:cond}. 
Suppose that the family of symbols $\{a_0, a_\l\}$, $\l>0$, satisfies \eqref{abounds:eq} 
with some $\b > ~d\varkappa^{-1}$, 
uniformly in $\l$, and is such that $a_\l\to a$ as $\l\to 0$ pointwise. 
Then 
\begin{align}\label{cbconvf:eq}
\CB_d(a_\l,\varphi; \p\L, f)\to \CB_d(a_0,\varphi; \p\L, f),\ \l\to 0.
\end{align}
\end{cor}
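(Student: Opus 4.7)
The plan is to apply Lebesgue's dominated convergence theorem twice---once on the surface $\p\L$ and once on the hyperplane $\Pi_\be$---reducing the claim to a one-dimensional continuity statement. The estimate \eqref{coeffd:eq}, whose proof inputs only the constants appearing in \eqref{abounds:eq}, already gives $|\CA_d(a_\l, \bn_\bx; f)|\lesssim \1 f\1_2 R^{\g-\s}$ uniformly in $\l$ and in $\bx\in\p\L$. Combined with the continuity of $\varphi$ and the boundedness of $\p\L$ (Condition \ref{domain:cond}), this furnishes an integrable majorant on $\p\L$, so \eqref{cbconvf:eq} will follow from dominated convergence once we establish the pointwise limit $\CA_d(a_\l, \be; f)\to \CA_d(a_0, \be; f)$ for each $\be\in\mathbb S^{d-1}$.

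To handle the pointwise convergence of $\CA_d$, I rewrite it via \eqref{cbd1d:eq} as an integral over $\Pi_\be$ of one-dimensional $\CB_1$ coefficients and apply dominated convergence again. Proposition \ref{scales:prop} applied to the slice $a_\l(\hat\bxi, \cdot)$ with amplitude $v_{\hat\bxi}(t)=(1+|\hat\bxi|^2+t^2)^{-\b/2}$ and scale $\tau\equiv 1$ gives, exactly as in the derivation of \eqref{coeffd:eq},
\[
\bigl|\CB_1\bigl(a_\l(\hat\bxi,\cdot); f\bigr)\bigr| \lesssim \1 f\1_2 R^{\g-\s} \lu\hat\bxi\ru^{-\s\b+1}
\]
uniformly in $\l$. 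Choosing $\s\in(d/\b,\varkappa]$ (a nonempty interval by the hypothesis $\b>d\varkappa^{-1}$) makes this majorant integrable over $\R^{d-1}$, reducing the claim to the one-dimensional statement $\CB_1(a_\l(\hat\bxi,\cdot); f)\to \CB_1(a_0(\hat\bxi,\cdot); f)$ for each fixed $\hat\bxi$.

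The main obstacle is this one-dimensional continuity. Writing $b_\l(t):=a_\l(\hat\bxi,t)$, the uniform derivative bounds in \eqref{abounds:eq} combined with the pointwise convergence $b_\l\to b_0$ imply by Arzel\`a--Ascoli that $b_\l\to b_0$ uniformly on every compact subset of $\R$, while all $b_\l$ remain uniformly multi-scale with a common amplitude and scale. I would then truncate the defining integral \eqref{cb1:eq} to a compact annular region $\{(\xi,t):r<|t|<R,\ |\xi|<L\}$ with $0<r<R$ and $L>0$. On this region the integrand $U(b_\l(\xi),b_\l(\xi+t);f)/t^2$ converges uniformly to its limit---the H\"older-type continuity of $U(\cdot,\cdot;f)$ in its symbol arguments is a direct consequence of Condition \ref{f:cond}---so the compact part passes to the limit by ordinary dominated convergence. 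The tails (small $|t|$, large $|t|$, large $|\xi|$) are controlled uniformly in $\l$ by rerunning the truncation-based arguments that prove \eqref{coeffscales:eq} in \cite{Sobolev2016b}; each tail piece is bounded purely in terms of $\1 f\1_2$ and the multi-scale data $v_{\hat\bxi},\tau$, with no further dependence on $b_\l$. Letting $r\to 0$ and $R,L\to\infty$ then yields the one-dimensional convergence and completes the proof of the corollary.
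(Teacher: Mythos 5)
Your overall two-step dominated-convergence architecture (first on $\p\L$, then on $\Pi_\be$ after rewriting $\CA_d$ via \eqref{cbd1d:eq}) is sound, and the majorants you produce from \eqref{adest:eq}/\eqref{coeffd:eq} and from Proposition \ref{scales:prop} are correct and uniform in $\l$ for exactly the reasons you give. However, your route diverges from the paper's at the one-dimensional step, and it is precisely there that the proposal becomes incomplete.

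The crux is that you try to pass to the limit \emph{directly} in $\CB_1(a_\l(\hat\bxi,\cdot);f)$ for the non-smooth $f$. Two parts of that step are under-justified. First, you invoke ``the H\"older-type continuity of $U(\cdot,\cdot;f)$ in its symbol arguments'' as a direct consequence of Condition \ref{f:cond}. The only continuity estimate for $U$ established in the paper, \eqref{approx1:eq}, has $\|g'\|_{\plainL\infty}$ on the right-hand side, which is infinite when $\g<1$; it cannot be cited here. What you actually need for your compact-region argument is only \emph{plain} joint continuity of $(s_1,s_2)\mapsto U(s_1,s_2;f)$ for H\"older $f$, which is true but requires its own (short) argument and is not stated anywhere in the paper --- and claiming H\"older modulus of continuity is a genuine overstatement. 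Second, and more seriously, the tail control in the principal-value integral defining $\CB_1$ must be \emph{uniform in $\l$} for your $r\to 0$, $R,L\to\infty$ argument to close. You delegate this to ``rerunning the truncation-based arguments'' of \cite{Sobolev2016b}; but those arguments are not reproduced in the present paper, and asserting that they yield $\l$-uniform bounds without exhibiting them leaves the key technical point unverified. In particular, the small-$|t|$ tail is where the principal-value structure bites for small $\g$, and this is exactly the regime where \cite{Sobolev2016b} does real work.

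The paper sidesteps both issues by a cleaner two-stage reduction that is entirely self-contained. It first proves the convergence \eqref{cbconv:eq} for \emph{smooth} test functions $g$ with bounded $g', g''$: there the continuity estimate \eqref{approx1:eq} applies, the integrable majorant comes from the elementary bounds \eqref{elem:eq}--\eqref{elem1:eq}, and a single application of dominated convergence over $\R^d_\bxi\times\R_t$ (not over $\Pi_\be$ slice-by-slice) gives the pointwise limit $\CA_d(a_\l,\be;g)\to\CA_d(a_0,\be;g)$; a second dominated convergence over $\p\L$, justified by \eqref{adest:eq}, finishes the smooth case. It then handles the non-smooth $f$ by the cutoff splitting $f=f_R^{(1)}+f_R^{(2)}$ with $f_R^{(1)}(t)=f(t)\z(tR^{-1})$: the piece $f_R^{(2)}\in\plainC2(\R)$ is covered by the smooth case, while \eqref{coeffd:eq} gives $|\CB_d(a_\l,\varphi;\p\L,f_R^{(1)})|\lesssim \1 f\1_2 R^{\g-\s}$ uniformly in $\l$, so letting $R\to 0$ at the end yields \eqref{cbconvf:eq}. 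This avoids ever needing a continuity statement for $U(\cdot,\cdot;f)$ with singular $f$, and avoids re-proving tail bounds from \cite{Sobolev2016b}. If you want to rescue your approach, you should either (i) adopt this smooth/non-smooth splitting before invoking any continuity of $U$, or (ii) actually prove the qualitative continuity of $U(\cdot,\cdot;f)$ from the H\"older bound implied by Condition \ref{f:cond}, \emph{and} carry out the $\l$-uniform tail estimates rather than citing them.
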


\begin{proof} 
Let us consider first a test function $g\in\plainC2(\R)$ with uniformly bounded $g'$ and $g''$, and prove that 
\begin{align}\label{cbconv:eq}
\CB_d(a_\l,\varphi; \p\L, g)\to \CB_d(a_0,\varphi; \p\L, g),\ \l\to 0.
\end{align}
In view of the definition \eqref{cbd:eq} it suffices to prove that  
\begin{align}\label{alconv:eq}
\CA_d(a_\l, \be; g)\to \CA_d(a_0, \be; g),\ \l\to 0,
\end{align}
for each $\be\in\mathbb S^{d-1}$. Indeed, by \eqref{adest:eq} the integrals $\CA_d(a_\l, \be; g)$ 
are bounded uniformly in $\be$, so the Dominated Convergence Theorem would lead to \eqref{cbconv:eq}. 

Proof of \eqref{alconv:eq}. 
According to the bounds \eqref{elem:eq}, \eqref{elem1:eq}, the family  
\begin{align*}
F_\l(\bxi, t ) : = U\big(a_\l(\bxi), a_\l(\bxi+t\be); g\big)
\end{align*}
 has an integrable majorant.  
Furthermore, in view of \eqref{approx1:eq}, 
\begin{align*}
|F_\l(\bxi, t) - F_0(\bxi, t)|
\lesssim \|g'\|_{\plainL\infty}\big(
|a_\l(\bxi)-a_0(\bxi)|^\d + 
|a_\l(\bxi + t\be)-a_0(\bxi+t\be)|^\d
\big).
\end{align*}
Since the right-hand side tends zero as $\l\to 0$, we have the convergence  
$F_\l(\bxi, t)\to F_0(\bxi, t), \l\to 0,$ for all $\bxi, t$.
By the Dominated Convergence Theorem,  \eqref{alconv:eq} holds, as claimed. 

Return to the function $f$. 
Let $\z\in\plainC\infty_0(\R)$ be a real-valued function, such that 
$\z(t) = 1$ for $|t|\le 1/2$. 
Represent $f= f_R^{(1)}+ f_R^{(2)}, 0<R\le 1$, where 
$f_R^{(1)}(t) = f(t) \z\bigl(tR^{-1}\bigr)$,\ 
$f_R^{(2)}(t) = f(t)  - f_R^{(1)}(t)$. 
It is clear that  $f_R^{(2)}\in \plainC2(\R)$, and hence the convergence \eqref{cbconv:eq} holds with 
$g = f_R^{(2)}$, for each $R>0$. Furthermore, since $\1 f_R^{(1)}\1_2\lesssim \1 f\1_2$,  
the bound \eqref{coeffd:eq} implies that 
\begin{align*}
|\CB_d(a_\l, \varphi; \p\L, f_R^{(1)})|\lesssim \1 f\1_2 \|\varphi\|_{\plainL\infty} 
\meas_{d-1}(\p\L\cap \supp\varphi) R^{\g-\s},
\end{align*}
with an arbitrary $\s\in (d\b^{-1}, \varkappa]$. Since $R>0$ is arbitrary, this implies the convergence 
\eqref{cbconvf:eq}. 
\end{proof}

\section{ Estimates for multidimensional Wiener-Hopf operators} 

As always, we assume that $a\in\plainC\infty(\R^d)$ satisfies \eqref{abounds:eq}. 
Our main objective in this section is to prepare some trace-class 
bounds for localized operators, such as $\chi_{\bz, \ell} D_\a(a, \L; g_p)$, where 
$g_p(t) = t^p, p = 1, 2, \dots$. The obtained bounds are uniform in $\bz\in\R^d$, 
and in the symbols $a$ satisfying \eqref{abounds:eq} with the same implicit constants.

As we have noted previously, 
the symbols satisfying \eqref{abounds:eq}, 
can be interpreted as multi-scale symbols 
(see Subsection \ref{non_smooth:subsect}) with the amplitude 
$v=v(\bxi)$ and the scaling function 
$\tau=\tau(\bxi)$ defined in \eqref{power:eq}.  
The bounds in the next proposition are borrowed from 
\cite[Lemma 3.4 and Theorem 3.5]{Leschke2016}, 
where they were obtained for more general multi-scale symbols.
Below we state them for the case \eqref{power:eq} only. 

\begin{prop}\label{Crelle:prop}
Let $a$ be a symbol satisfying \eqref{abounds:eq} 
with some $\b >d$. 
Suppose that $\L$ is a Lipschitz region, 
and that $\a\ell \gtrsim 1$. 
Then  
\begin{align}\label{crelle1:eq}
\|\chi_\L \chi_{\bz, \ell}
\op_\a(a)(I-\chi_\L)\|_1
\lesssim (\a\ell)^{d-1}.
\end{align}
If $\L$ is basic Lipschitz, then this bound is uniform in $\L$.

Suppose in addition that 

-- $\L$ satisfies Condition \ref{domain:cond},

-- the function $f$ satisfies Condition \ref{f:cond} with some $\g>0$, $R>0$  
and  $n=2$,  

-- $\b > d\varkappa^{-1}$,\ where $\varkappa = \min\{\g, 1\}$.

\noindent 
Then for any $\s \in (d\b^{-1}, \varkappa)$ and all $\a\gtrsim 1$ 
we have 
\begin{align}\label{crelle2:eq}
\|D_\a(a, \L; f)\|_1\lesssim  \a^{d-1}\1 f\1_2 R^{\g-\s}.
\end{align}
The implicit constants in 
\eqref{crelle1:eq} and \eqref{crelle2:eq} 
do not depend on $\a$, $f$ and $R$,
but depend on the region $\L$.
\end{prop}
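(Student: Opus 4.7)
I would treat both bounds as specialisations of Lemma~3.4 and Theorem~3.5 of \cite{Leschke2016}, the point being to verify that in the power-decay case \eqref{abounds:eq} the constants behave as stated. Our symbol $a$ is multi-scale with $v(\bxi)=\lu\bxi\ru^{-\b}$ and $\tau\equiv 1$, so $V_{\s,\rho}(v,\tau)$ is finite for every $\s>d\b^{-1}$ and every $\rho\in\R$; this is the only input that the cited results need.

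For \eqref{crelle1:eq} the plan is a partition-of-unity reduction. Since $\tau\equiv 1$, the intrinsic length of $\op_\a(a)$ is $\a^{-1}$: its integral kernel is $\a^d(2\pi)^{-d/2}\check a(\a(\bx-\by))$, and \eqref{abounds:eq} at every order $m$ forces $\check a$ to be Schwartz. First I would cover $\p\L\cap B(\bz,\ell)$ by $O(\ell^{d-1})$ unit balls, inside each of which $\p\L$ is the graph of a globally Lipschitz function with constant $M_\Phi$. Over a single unit box the kernel of $\chi_\L\op_\a(a)(I-\chi_\L)$ is concentrated in an $\a^{-1}$-tube around the graph; a Hilbert--Schmidt estimate on $\a^{-1}$-subboxes together with a Schur test for the cross-subbox interactions gives trace norm $\lesssim \a^{d-1}$ per unit box. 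Summing over the $O(\ell^{d-1})$ boxes yields $(\a\ell)^{d-1}$, and the Lipschitz-uniform construction supplies the uniformity in $\bz$ and, for basic Lipschitz $\L$, in $\L$ itself.

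For \eqref{crelle2:eq} I would split $f$ around its singular point using the dyadic cut-off already used in the proof of Corollary~\ref{cbconvf:cor}: write $f=f_R^{(1)}+f_R^{(2)}$ where $f_R^{(1)}(t)=f(t)\z\bigl((t-x_0)R^{-1}\bigr)$ for a smooth bump $\z$. The smooth remainder $f_R^{(2)}$ is $\plainC2(\R)$ and compactly supported with $\1 f_R^{(2)}\1_2\lesssim\1 f\1_2$; the Helffer--Sj\"ostrand almost-analytic extension combined with the interface bound \eqref{crelle1:eq} then controls $\|D_\a(a,\L;f_R^{(2)})\|_1$ by $\a^{d-1}\1 f\1_2 R^\g$. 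For the singular piece $f_R^{(1)}$, whose H\"older modulus on $|t-x_0|\le R$ is of the order $\1 f\1_2 R^{\g-\s}|t-x_0|^\s$ with $\s\in(d\b^{-1},\varkappa)$, I would invoke a Birman--Solomyak / Peller-type operator H\"older inequality to convert the pointwise estimate into a trace-class bound of the form $\a^{d-1}\1 f\1_2 R^{\g-\s}$, with the underlying localisation norm on $\chi_\L\op_\a(a)(I-\chi_\L)$ supplied once more by \eqref{crelle1:eq}. The admissible range for $\s$ is non-empty precisely when $\b>d\varkappa^{-1}$, matching the hypothesis.

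The main obstacle is the singular part of \eqref{crelle2:eq}: one must apply the operator H\"older inequality at a fractional exponent $\s<\varkappa$ while keeping the trace norm at the exact order $\a^{d-1}$ (no $\a^{d-1+\vare}$ or logarithmic losses) and with the correct dependence $R^{\g-\s}$ on the support radius. As $\b\downarrow d\varkappa^{-1}$ the admissible window for $\s$ collapses to a point, so the multi-scale machinery of \cite{Leschke2016} has to be used uniformly up to this threshold; the remaining steps reduce to the routine identification $v=\lu\cdot\ru^{-\b}$, $\tau\equiv 1$ and the verification that the implicit constants depend only on the data listed in the statement.
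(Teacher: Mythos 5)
The paper offers no proof of Proposition~\ref{Crelle:prop} at all: it is introduced with the phrase ``the bounds in the next proposition are borrowed from \cite[Lemma 3.4 and Theorem 3.5]{Leschke2016}, where they were obtained for more general multi-scale symbols,'' and the only work done is to specialise to $v(\bxi)=\lu\bxi\ru^{-\b}$, $\tau\equiv 1$. Your opening paragraph does exactly that, and you correctly identify the only verification needed, namely that $V_{\s,\rho}(v,\tau)<\infty$ for $\s>d\b^{-1}$; so the citation-level comparison is a match.

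Your additional reconstruction is optional commentary from the paper's point of view, but a couple of its steps would not survive scrutiny if pushed. The tiling/Hilbert--Schmidt argument for \eqref{crelle1:eq} is fine in spirit. The treatment of \eqref{crelle2:eq}, however, has a real gap: after the split $f=f_R^{(1)}+f_R^{(2)}$, the smooth piece $f_R^{(2)}$ has $\|(f_R^{(2)})''\|_{\plainL\infty}\sim \1 f\1_2 R^{\g-2}$ (not $R^{\g}$), so a $\plainC2$/Helffer--Sj\"ostrand bound on that piece produces $\a^{d-1}\1 f\1_2 R^{\g-2}$, which is far worse than the target $R^{\g-\s}$ ($\s<1<2$) and cannot be repaired by choosing the cut-off scale, since it coincides with the support radius $R$. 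Moreover a $\plainC2$ almost-analytic extension is borderline for a trace-norm resolvent integral ($\int|\im z|^{-1}$ diverges). The cited proof avoids the split entirely: the operator H\"older / Birman--Solomyak--Peller type estimate (from the companion paper \cite{Sobolev2016}) is applied to $f$ as a whole, using that $\1 f\1_2<\infty$ encodes H\"older-$\s$ control at every $\s\le\varkappa$ simultaneously and produces the $R^{\g-\s}$ factor directly from the support radius. The split $f_R^{(1)}+f_R^{(2)}$ is indeed used in the paper, but only downstream (Step 3 of the proof of Theorem~\ref{main:thm} and Corollary~\ref{cbconvf:cor}), \emph{with} \eqref{crelle2:eq} already available as a black box; importing it inside the proof of \eqref{crelle2:eq} inverts the logic and loses the sharp $R$-dependence.
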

 
The next Proposition is a direct consequence of 
\cite[Lemma 5.2]{Sobolev2016a}, for the symbols satisfying \eqref{abounds:eq}.

\begin{prop}\label{comm:prop} 
Let the symbol $a$ satisfy \eqref{abounds:eq} with $\b >d$. 
Let $\a>0$ and $\ell >0$. 
Then for any $r>1$ and any $m\ge d+1$, we have 
	\begin{equation}\label{decouple:eq}
	\|\chi_{\bz, \ell} 
	\op_{\a}(a)\bigl(1- \chi_{\bz, r\ell}\bigr)\|_1
	\lesssim (\a \ell)^{d-m}, 
	\end{equation} 
	with an implicit constant depending on $r$. 
\end{prop}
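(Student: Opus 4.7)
The plan is to deduce Proposition \ref{comm:prop} directly from Lemma 5.2 of \cite{Sobolev2016a}, as advertised in the statement. My job is mostly to verify that the hypotheses of that general multi-scale lemma hold for symbols satisfying the concrete bound \eqref{abounds:eq}, and to trace through how the exponents match.

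First, I would note that any $a\in\plainC\infty(\R^d)$ satisfying \eqref{abounds:eq} is automatically a multi-scale symbol in the sense of Subsection \ref{non_smooth:subsect}, with amplitude and scale as in \eqref{power:eq}, namely $v(\bxi)=\lu\bxi\ru^{-\b}$ and $\tau(\bxi)\equiv 1$. Both functions satisfy \eqref{Lip:eq} and \eqref{w:eq} trivially. The relevant integral quantity is
\begin{equation*}
V_{\s,\rho}(v,\tau) \;=\; \int \lu\bxi\ru^{-\s\b}\,d\bxi,
\end{equation*}
which is finite for any $\rho\in\R$ as soon as $\s\b>d$. Since $\b>d$ by assumption, one may take $\s$ slightly larger than $d/\b$ and smaller than $1$. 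Thus all structural hypotheses of \cite[Lemma 5.2]{Sobolev2016a} are met uniformly in the implicit constants of \eqref{abounds:eq}.

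Next I would invoke \cite[Lemma 5.2]{Sobolev2016a} with the indicators $\chi_{\bz,\ell}$ and $1-\chi_{\bz,r\ell}$, whose supports are separated by a distance of at least $(r-1)\ell$. Specialized to the multi-scale data \eqref{power:eq}, the conclusion of that lemma reads
\begin{equation*}
\|\chi_{\bz,\ell}\op_\a(a)(1-\chi_{\bz,r\ell})\|_1 \;\lesssim\; (\a\ell)^{d-m}\,V_{\s,\rho}(v,\tau),
\end{equation*}
for any $m\ge d+1$, provided $\a\ell\gtrsim 1$; the constant absorbs a factor depending on $r$ coming from the separation, and on $m$ from the number of integrations by parts performed. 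Since $V_{\s,\rho}(v,\tau)\asymp 1$, this is precisely \eqref{decouple:eq}.

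The heuristic behind the cited lemma, which I would briefly indicate for motivation, is that the kernel of $\op_\a(a)$ is $\a^d k(\a(\bx-\by))$ with $k$ the inverse Fourier transform of $a$; the decay bounds \eqref{abounds:eq} on all derivatives of $a$ force $k$ to be rapidly decreasing, so iterated integration by parts in $\bxi$ produces kernel estimates of the form $|K_\a(\bx,\by)|\lesssim_m \a^{d-m}|\bx-\by|^{-m}$. Combined with the gap $(r-1)\ell$ between the supports of the two cut-offs, this yields the factor $(\a\ell)^{d-m}$ after a routine Hilbert--Schmidt factorization through an intermediate $L^2$. The only non-trivial point --- converting the kernel decay into a \emph{trace}-norm bound rather than a Hilbert--Schmidt bound --- is precisely what is handled inside \cite[Lemma 5.2]{Sobolev2016a}; here it comes for free, and the proof of Proposition \ref{comm:prop} reduces to the verification carried out in the two preceding paragraphs.
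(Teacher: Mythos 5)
Your proof reproduces the paper's argument, which simply cites \cite[Lemma~5.2]{Sobolev2016a} after observing that any $a$ obeying \eqref{abounds:eq} is a multi-scale symbol with the data \eqref{power:eq}, for which $V_{\s,\rho}(v,\tau)\asymp 1$. The one point worth tidying is the extra hypothesis $\a\ell\gtrsim 1$ you insert: Proposition~\ref{comm:prop} (like Lemma~\ref{ta:lem} and the corollaries following it) is asserted for all $\a,\ell>0$, so you should either confirm that the cited lemma does not actually need $\a\ell\gtrsim 1$, or supply the complementary estimate in the regime $\a\ell\lesssim 1$, where the right-hand side $(\a\ell)^{d-m}$ exceeds $1$.
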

 
\begin{lem}\label{localal:lem} 
Let $\L$ be a Lipschitz region, and let 
$\a\ell\gtrsim 1$. Suppose that 
$a\in\plainC\infty(\R^d)$ satisfies \eqref{abounds:eq} with $\b > d$. Then we have 
\begin{align}\label{localal:eq}
\|\chi_{\bz, \ell} D_\a(a, \L; g_p)\|_1
\lesssim (\a\ell)^{d-1}.
\end{align}
\end{lem}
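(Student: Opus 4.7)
The plan is to reduce \eqref{localal:eq} to the basic boundary estimate \eqref{crelle1:eq} via a telescoping expansion that exploits the fact that $a$ depends only on the frequency variable.

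\textbf{Step 1 (Algebraic reduction).} Since $a = a(\bxi)$ is independent of $\bx$, the operators $\op_\a(a)$ and $\op_\a(b)$ are Fourier multipliers and satisfy the exact composition rule $\op_\a(a)\op_\a(b) = \op_\a(ab)$. In particular $\op_\a(a)^p = \op_\a(a^p)$, so
\begin{equation*}
W_\a(a^p;\L) = \chi_\L \op_\a(a^p)\chi_\L = \chi_\L \op_\a(a)^p \chi_\L.
\end{equation*}
The case $p = 1$ is trivial, so assume $p \ge 2$.

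\textbf{Step 2 (Telescoping expansion).} Write
\begin{equation*}
\chi_\L W_\a(a;\L)^p\chi_\L = \chi_\L \op_\a(a)\chi_\L \op_\a(a)\chi_\L \cdots \chi_\L \op_\a(a)\chi_\L,
\end{equation*}
with $p-1$ ``inner'' factors of $\chi_\L$. Replace each inner $\chi_\L$ by $I - (I - \chi_\L)$ and expand. The term in which every inner $\chi_\L$ is replaced by $I$ equals $\chi_\L \op_\a(a)^p\chi_\L = W_\a(a^p;\L)$. All remaining terms contain at least one factor $I - \chi_\L$, and each of them has the form
\begin{equation*}
\pm\, \chi_\L\, \op_\a(a)^{k_1}(I-\chi_\L)\op_\a(a)^{k_2}(I-\chi_\L)\cdots (I-\chi_\L)\op_\a(a)^{k_r}\,\chi_\L,
\end{equation*}
with integers $r \ge 2$ and $k_1,\dots,k_r \ge 1$ satisfying $k_1+\dots+k_r = p$. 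Using Step 1, replace each $\op_\a(a)^{k_j}$ by $\op_\a(a^{k_j})$.

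\textbf{Step 3 (Trace norm estimate for each term).} Localize each term with $\chi_{\bz,\ell}$ and factor it as
\begin{equation*}
\bigl[\chi_{\bz,\ell}\,\chi_\L\,\op_\a(a^{k_1})(I-\chi_\L)\bigr]\;\bigl[\op_\a(a^{k_2})(I-\chi_\L)\cdots\op_\a(a^{k_r})\chi_\L\bigr].
\end{equation*}
The first factor has trace norm $\lesssim (\a\ell)^{d-1}$: the symbol $a^{k_1}$ satisfies \eqref{abounds:eq} with $\b$ replaced by $k_1\b > d$ (with implicit constants depending on $p$ only), and since $\a\ell \gtrsim 1$, the bound \eqref{crelle1:eq} from Proposition \ref{Crelle:prop} applies. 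The second factor has operator norm bounded by $\prod_{j\ge 2} \|\op_\a(a^{k_j})\| \lesssim 1$, since each Fourier multiplier $\op_\a(a^{k_j})$ is bounded by $\|a\|_\infty^{k_j}$, uniformly in $\a$. Summing over the finitely many (depending only on $p$) nontrivial terms in the telescoping expansion yields \eqref{localal:eq}.

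\textbf{Main obstacle.} There is no real obstacle here: the argument is a standard decoupling-at-the-boundary trick whose success depends entirely on the availability of \eqref{crelle1:eq} and on the exact composition law for Fourier multipliers. The only point that requires mild care is checking that the symbols $a^{k_j}$ fall within the class for which Proposition \ref{Crelle:prop} is stated uniformly, which is immediate from \eqref{abounds:eq}.
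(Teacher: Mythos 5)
Your argument is correct and rests on the same two ingredients as the paper's proof: the exact composition law $\op_\a(a)^k=\op_\a(a^k)$ for Fourier multipliers and the boundary trace bound \eqref{crelle1:eq}. The paper organizes this as a one-step induction on $p$, peeling off a single factor of $W_\a(a)$ at each stage and invoking \eqref{crelle1:eq} once per step, while you unroll the same algebra into an explicit telescoping sum over all placements of $I-\chi_\L$; the two presentations are equivalent.
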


\begin{proof}
The proof is by induction. 
First observe that $D_\a(a, \L; g_1) = 0$, so  
\eqref{localal:eq} trivially holds. 

Suppose that \eqref{localal:eq} holds for some $p=k$. In order to 
prove it for $p = k+1$, write:
\begin{align*}
D_\a(a; g_{k+1})
= &\ D_\a(a; g_k) W_\a(a) 
+ W_\a(a^k) W_\a(a) - W_\a(a^{k+1})\\[0.2cm]
= &\ D_\a(a; g_k) W_\a(a) 
- \chi_\L \op_\a(a^k)(I-\chi_\L) \op_\a(a) \chi_\L.
\end{align*}
Thus by the triangle inequality, 
\begin{align*}
\|\chi_{\bz, \ell}D_\a(a; g_{k+1})\|_1
\le &\ \|\chi_{\bz, \ell} D_\a(a; g_k)\|_1 \|W_\a(a)\| 
+ \|\chi_{\bz, \ell}\chi_\L \op_\a(a^k)(I-\chi_\L)\|_1 \|\op_\a(a)\|\\[0.2cm]
\lesssim &\ (\a\ell)^{d-1},
\end{align*}
where we have used the induction assumption, the bound 
\eqref{crelle1:eq} and the elementary estimate $\|\op_\a(a)\|\lesssim 1$. 
This completes the proof. 
\end{proof}

For any $R>0$ and $p\in\mathbb N$ define the 
$(p+1)$-tuple of numbers
 \begin{align}\label{rR:eq}
 r_j = r_j(R) = R\bigg(1+\frac{j}{p}\bigg), 
 j = 0, 1, 2, \dots, p, 
 \end{align}
so that $r_0 = R$, $r_{p} = 2R$.
 Denote
 \begin{align}
 T_p(a; \L; \bz, R) = &\ \chi_{\bz, R}\prod_{j=1}^p  
 W_\a(a;  B(\bz, r_j)\cap \L),\label{tdef:eq}\\[0.2cm]
 S_p(a; \L; \bz, R) = &\ \big(1-\chi_{\bz, 2R}\big)\prod_{j=1}^p  
 W_\a\big(a; (B(\bz, r_{p-j}))^c\cap\L\big).\label{sdef:eq}
 \end{align}
When it does not cause confusion, 
sometimes we omit the dependence of these operators on some or all variables and write, e.g., 
$T_p(\L), S_p(\L)$ or $T_p, S_p$.

\begin{lem}\label{ta:lem}
Let $\a>0$ and $\ell >0$. Then for any 
$m\ge d+1$, 
\begin{align}
\big\| \chi_{\bz, \ell} g_p(W_\a(a; \L) )
- T_p(a;\L; \bz, \ell)\big\|_1
\lesssim (\a\ell)^{d-m}, 
\label{ta:eq}
\\[0.2cm]
\big\| (I-\chi_{\bz, 2\ell}) g_p(W_\a(a;\L) )
- S_p(a;\L; \bz, \ell)\big\|_1 
\lesssim (\a\ell)^{d-m}.
\label{sa:eq}
\end{align} 
\end{lem}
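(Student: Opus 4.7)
The plan is to expand both $\chi_{\bz,\ell}g_p(W_\a(a;\L))$ and $T_p$ into explicit alternating products of $\op_\a(a)$ and multiplication cutoffs, then compare them via a telescoping sum whose individual terms are controlled by \eqref{decouple:eq}. Since $g_p(W_\a(a;\L)) = (\chi_\L\op_\a(a)\chi_\L)^p$, one has
\begin{equation*}
\chi_{\bz,\ell}g_p(W_\a(a;\L)) = \chi_{\bz,\ell}\chi_\L\op_\a(a)\chi_\L\op_\a(a)\cdots\op_\a(a)\chi_\L
\end{equation*}
with $p$ copies of $\op_\a(a)$ interlaced with $p+1$ copies of $\chi_\L$. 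Writing $P_j := \chi_{B(\bz,r_j)\cap\L}$ and using $P_jP_{j+1}=P_j$ (since $B(\bz,r_j)\subset B(\bz,r_{j+1})$), the product in \eqref{tdef:eq} collapses to
\begin{equation*}
T_p = \chi_{\bz,\ell}\chi_\L\op_\a(a)P_1\op_\a(a)P_2\cdots\op_\a(a)P_p.
\end{equation*}

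Introduce hybrid operators $A_k$, $k=0,\ldots,p$, in which the first $k$ cutoffs between consecutive $\op_\a(a)$'s read $P_1,\ldots,P_k$ while the remaining $p-k$ stay as $\chi_\L$; then $A_0 = \chi_{\bz,\ell}g_p(W_\a(a;\L))$, $A_p = T_p$, and
\begin{equation*}
\chi_{\bz,\ell}g_p(W_\a(a;\L)) - T_p = \sum_{k=1}^p (A_{k-1} - A_k).
\end{equation*}
In the $k$-th summand, position $k$ carries the factor $\chi_\L - P_k = \chi_\L(1-\chi_{\bz,r_k})$, while the factor just to its left (either $\chi_{\bz,\ell}\chi_\L$ when $k=1$, or $P_{k-1}$ otherwise) is supported in $\overline{B(\bz,r_{k-1})}$. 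The choice \eqref{rR:eq} ensures $r_k/r_{k-1}$ is bounded below by a constant strictly greater than $1$ and $r_{k-1}\asymp\ell$, so factoring $\chi_{\bz,r_{k-1}}$ out of the preceding cutoff and invoking \eqref{decouple:eq} yields $\|\chi_{\bz,r_{k-1}}\op_\a(a)(1-\chi_{\bz,r_k})\|_1 \lesssim (\a\ell)^{d-m}$. All other factors in $A_{k-1}-A_k$ are multiplication operators of norm $\le 1$ or $\op_\a(a)$'s of uniformly bounded operator norm (by \eqref{abounds:eq}), so each telescoped term inherits the $(\a\ell)^{d-m}$ bound; summing the $p$ (constant) contributions proves \eqref{ta:eq}.

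For \eqref{sa:eq} the same scheme applies with $Q_j := \chi_{B(\bz,r_j)^c\cap\L}$ replacing $P_j$. Using $B(\bz,r_{p-j})^c\subset B(\bz,r_{p-j-1})^c$, hence $Q_{p-j}Q_{p-j-1}=Q_{p-j}$, the product in \eqref{sdef:eq} simplifies to
\begin{equation*}
S_p = (I-\chi_{\bz,r_p})\chi_\L\op_\a(a)Q_{p-1}\op_\a(a)Q_{p-2}\cdots\op_\a(a)Q_0.
\end{equation*}
The analogous telescoping replaces $\chi_\L$ at position $k$ by $Q_{p-k}$; the resulting difference $\chi_\L - Q_{p-k} = \chi_\L\chi_{\bz,r_{p-k}}$ is supported in $B(\bz,r_{p-k})$, while the preceding cutoff ($(I-\chi_{\bz,r_p})\chi_\L$ for $k=1$, or $Q_{p-k+1}$ otherwise) is supported in $B(\bz,r_{p-k+1})^c$. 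Taking adjoints in \eqref{decouple:eq} (permissible since $\op_\a(a)^* = \op_\a(\overline a)$ satisfies the same \eqref{abounds:eq}) bounds the relevant sandwich by $\|(1-\chi_{\bz,r_{p-k+1}})\op_\a(a)\chi_{\bz,r_{p-k}}\|_1 \lesssim (\a\ell)^{d-m}$, and the remainder of the argument is identical.

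The main (essentially only) obstacle is the bookkeeping of nested supports: one must verify at each telescoping step that the "inner" and "outer" balls sandwiching the pertinent $\op_\a(a)$ are separated in the direction demanded by \eqref{decouple:eq}, and notice that in \eqref{sa:eq} this direction is reversed relative to \eqref{ta:eq}. The precise radii \eqref{rR:eq} are chosen so that consecutive ratios stay uniformly above $1$, which is precisely what enables \eqref{decouple:eq} at each step with the desired $(\a\ell)^{d-m}$ decay.
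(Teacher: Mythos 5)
Your proof is correct and follows essentially the same approach as the paper: the paper proves \eqref{ta:eq} by induction on $p$, peeling one factor off at a time, and you unroll that same induction into a telescoping sum over the positions where a cutoff $\chi_\L$ gets replaced by $P_k$ (resp. $Q_{p-k}$), with each telescoped term controlled by the decoupling estimate \eqref{decouple:eq} exactly as in the paper's inductive step. The explicit collapsed formulas for $T_p$ and $S_p$ and the systematic tracking of which ball supports which cutoff make the bookkeeping slightly more transparent than the paper's inductive recursion, but the underlying argument is identical.
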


\begin{proof}
Denote 
\begin{align*}
G_p = \chi_{\bz, \ell}g_p(W_\a(a;\L)),\ T_p = T_p(a; \L; \bz, \ell).
\end{align*}
The proof is by induction. By definition,
\begin{align*}
G_1 - T_1 =  \chi_{\bz, \ell}\chi_\L\op_\a(a) 
\big(I-\chi_{\bz, r_1}\big)\chi_\L, r_1 = r_1(\ell).
\end{align*}
Since $r_1 > \ell$, 
by \eqref{decouple:eq}, the required bound 
\eqref{ta:eq} holds for $p=1$. 
Suppose it holds for some $p = k\ge 1$, 
and let us derive it for $p = k+1$:
\begin{align*}
G_{k+1} - T_{k+1}
= &\ (G_{k}- T_k) W_\a(a; \L)\\[0.2cm]
&\ + T_k \chi_\L\big(
\chi_{\bz, r_k}\op_\a(a) - \chi_{\bz, r_k}\op_\a(a)\chi_{\bz, r_{k+1}}
\big) \chi_\L.
\end{align*}
The last bracket equals
\begin{align*}
\chi_{\bz, r_k}\op_\a(a)\big(I - \chi_{\bz, r_{k+1}}\big),
\end{align*}
so, using for the last term \eqref{decouple:eq} again, we get 
\begin{align*}
\|G_{k+1} - T_{k+1}\|_1
\lesssim &\ 
\|G_k-T_k\|_1 \|W_\a(a; \L)\| 
+ \|T_k\|\ \|\chi_{\bz, r_k}\op_\a(a)\big(I - \chi_{\bz, r_{k+1}}\big)\|_1\\[0.2cm]
\lesssim &\ (\a\ell)^{d-m}, 
\end{align*}
which implies \eqref{ta:eq} for $p=k+1$, as required. Thus, by induction, 
\eqref{ta:eq} holds for all $p = 1, 2, \dots$. 

The bound \eqref{sa:eq} is derived in the same way up to obvious modifications. 
\end{proof}

\begin{cor}\label{local:cor}
Suppose that for some sets $\L$ and $\Pi$ we have 
\begin{align}\label{local:eq}
\L\cap B(\bz, 2\ell) = \Pi\cap B(\bz, 2\ell).
\end{align}
Then for any $m\ge d+1$, and any $\a>0$, $\ell>0$, we have 
\begin{align*}
\|\chi_{\bz, \ell} \bigl(g_p(W_\a(a, \L) ) - 
g_p(W_\a(a, \Pi) )\bigr)\|_1\lesssim 
(\a\ell)^{d-m}.
\end{align*}
\end{cor}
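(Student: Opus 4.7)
The plan is to reduce the estimate to Lemma \ref{ta:lem} via a simple locality observation. The key point is that the operator $T_p(a; \L; \bz, \ell)$ defined in \eqref{tdef:eq} depends on $\L$ only through the intersections $B(\bz, r_j)\cap \L$ for $j = 1, \dots, p$, where $r_j = \ell(1 + j/p)$. Since $r_j \le r_p = 2\ell$, each ball $B(\bz, r_j)$ is contained in $B(\bz, 2\ell)$, and therefore by the hypothesis \eqref{local:eq},
\begin{equation*}
B(\bz, r_j)\cap \L = B(\bz, r_j)\cap \L\cap B(\bz, 2\ell)
= B(\bz, r_j)\cap \Pi\cap B(\bz, 2\ell) = B(\bz, r_j)\cap \Pi,
\end{equation*}
for every $j = 1, 2, \dots, p$. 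Consequently $W_\a\big(a; B(\bz, r_j)\cap \L\big) = W_\a\big(a; B(\bz, r_j)\cap \Pi\big)$, and the definition \eqref{tdef:eq} yields the identity
\begin{equation*}
T_p(a; \L; \bz, \ell) = T_p(a; \Pi; \bz, \ell).
\end{equation*}

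With this identity in hand, the corollary follows by adding and subtracting the common operator $T_p$ and applying the triangle inequality:
\begin{align*}
\big\|\chi_{\bz, \ell}\big(g_p(W_\a(a; \L)) - g_p(W_\a(a; \Pi))\big)\big\|_1
\le &\ \big\|\chi_{\bz, \ell} g_p(W_\a(a; \L)) - T_p(a; \L; \bz, \ell)\big\|_1\\
&\ + \big\|\chi_{\bz, \ell} g_p(W_\a(a; \Pi)) - T_p(a; \Pi; \bz, \ell)\big\|_1.
\end{align*}
Each of the two summands on the right-hand side is bounded by $(\a\ell)^{d-m}$ by Lemma \ref{ta:lem}, which gives the claimed estimate.

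There is no real obstacle here; the argument is essentially a bookkeeping exercise once one notices that the truncated product $T_p$ only ``sees'' $\L$ inside the ball $B(\bz, 2\ell)$. The only small point to verify is that all radii $r_j$ appearing in \eqref{rR:eq} lie within $2\ell$, which is immediate from the definition. Note that the bound is vacuous in the regime $\a\ell \lesssim 1$ (since then $(\a\ell)^{d-m}\gtrsim 1$), so no additional case analysis is required to handle small $\a\ell$.
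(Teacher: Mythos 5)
Your proof is correct and follows essentially the same route as the paper: observe that $T_p(a;\L;\bz,\ell)=T_p(a;\Pi;\bz,\ell)$ because all radii $r_j\le 2\ell$, then apply Lemma~\ref{ta:lem} twice via the triangle inequality. You have merely spelled out the detail (that $r_j\le r_p=2\ell$) which the paper leaves to the reader; the final remark about the regime $\a\ell\lesssim 1$ is unnecessary since Lemma~\ref{ta:lem} already covers all $\a,\ell>0$.
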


\begin{proof}
Due to the condition \eqref{local:eq}, 
and to the definition \eqref{tdef:eq}, 
we have $T_p(a; \L; \bz, \ell) = T_p(a; \Pi; \bz, \ell)$.  Now 
the required bound follows from \eqref{ta:eq} used first for $\L$ and then for 
$\Pi$.
\end{proof}

\begin{cor}\label{localout:cor}
Suppose that for some sets $\L$ and $\Pi$ we have 
\begin{align}\label{localout:eq}
\L\cap (B(\bz, \ell))^c = \Pi\cap (B(\bz, \ell))^c.
\end{align} 
Then for any $m\ge d+1$, , and any $\a>0$, $\ell>0$ , we have 
\begin{align*}
\|(1-\chi_{\bz, 2\ell}) \bigl(g_p(W_\a(a,\L) ) - 
 g_p(W_\a(a, \Pi) )\bigr)\|_1\lesssim 
(\a\ell)^{d-m}.
\end{align*}
\end{cor}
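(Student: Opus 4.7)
The statement is the outer-support analogue of Corollary \ref{local:cor}, and the plan is to prove it by the same mechanism, with the roles of $T_p$ and \eqref{ta:eq} replaced by $S_p$ and \eqref{sa:eq}.

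The key observation is that the radii appearing in the definition \eqref{sdef:eq} are all at least $\ell$: indeed, with $R = \ell$, the numbers $r_{p-j} = \ell(1 + (p-j)/p)$ lie in $[\ell, 2\ell]$ for $j = 1, 2, \dots, p$. Hence $B(\bz, r_{p-j})^c \subset B(\bz, \ell)^c$ for every $j$, so the hypothesis \eqref{localout:eq} gives
\begin{equation*}
B(\bz, r_{p-j})^c \cap \L
= B(\bz, r_{p-j})^c \cap B(\bz, \ell)^c \cap \L
= B(\bz, r_{p-j})^c \cap B(\bz, \ell)^c \cap \Pi
= B(\bz, r_{p-j})^c \cap \Pi,
\end{equation*}
for all $j = 1, \dots, p$. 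Consequently every Wiener--Hopf factor in the product defining $S_p(a; \L; \bz, \ell)$ coincides with the corresponding factor for $\Pi$, so $S_p(a; \L; \bz, \ell) = S_p(a; \Pi; \bz, \ell)$.

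To conclude, I apply the bound \eqref{sa:eq} of Lemma \ref{ta:lem} first with region $\L$ and then with region $\Pi$, and combine them via the triangle inequality:
\begin{align*}
\bigl\| (I - \chi_{\bz, 2\ell})\bigl(g_p(W_\a(a, \L)) - g_p(W_\a(a, \Pi))\bigr)\bigr\|_1
&\le \bigl\| (I - \chi_{\bz, 2\ell}) g_p(W_\a(a, \L)) - S_p(a;\L;\bz,\ell)\bigr\|_1 \\
&\quad + \bigl\| (I - \chi_{\bz, 2\ell}) g_p(W_\a(a, \Pi)) - S_p(a;\Pi;\bz,\ell)\bigr\|_1 \\
&\lesssim (\a\ell)^{d-m},
\end{align*}
as required. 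There is no genuine obstacle here: the only point to verify carefully is the monotonicity of the radii $r_{p-j}$ that allows the geometric hypothesis \eqref{localout:eq} to be used uniformly in $j$, and this is immediate from \eqref{rR:eq}. The entire argument is a verbatim transcription of the proof of Corollary \ref{local:cor}, with \eqref{ta:eq} replaced by \eqref{sa:eq} and the radii increasing outward rather than inward.
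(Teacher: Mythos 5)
Your proof is correct and is essentially identical to the paper's: both identify $S_p(a;\L;\bz,\ell) = S_p(a;\Pi;\bz,\ell)$ under hypothesis \eqref{localout:eq} and then apply \eqref{sa:eq} to $\L$ and $\Pi$ with the triangle inequality. Your explicit verification that every $r_{p-j}\ge\ell$ (so that $B(\bz,r_{p-j})^c\subset B(\bz,\ell)^c$) is a useful detail that the paper leaves implicit.
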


\begin{proof}
Due to the condition \eqref{local:eq}, 
and to the definition \eqref{sdef:eq}, 
we have $S_p(a; \L; \bz, \ell) = S_p(a; \Pi; \bz, \ell)$.  Now 
the required bound follows from \eqref{sa:eq} 
used first for $\L$ and then for $\Pi$.
\end{proof}

 \begin{lem}\label{2ell_away:lem}
For some set $\L\subset \R^d$ and some $\bz\in\R^d$ 
suppose that $B(\bz, 2\ell)\subset \L$. Then 
 for any $m\ge d+1$, and any $\a>0$, $\ell>0$, we have 
 \begin{align}\label{suz:eq}
 \| \chi_{\bz, \ell} D_\a(a, \L; g_p)\|_1\lesssim 
 (\a\ell)^{d-m}.
 \end{align}

 Suppose that $(B(\bz, \ell))^c\subset \L$. Then 
 \begin{align}\label{outsuz:eq}
 \| (I-\chi_{\bz, 2\ell}) D_\a(a, \L; g_p)\|_1\lesssim 
 (\a\ell)^{d-m}.
 \end{align}
 
 \end{lem}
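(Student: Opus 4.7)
The plan is to exploit the identity $D_\a(a, \R^d; g_p) = 0$, which holds because $W_\a(a; \R^d) = \op_\a(a)$ is a Fourier multiplier (its symbol depends only on $\bxi$), and Fourier multipliers commute: $\op_\a(a)\op_\a(b) = \op_\a(ab)$, whence $g_p(\op_\a(a)) = \op_\a(a)^p = \op_\a(a^p)$. Under either of the geometric hypotheses of the lemma, $\bz$ is separated from $\p\L$, so it should be possible to reduce each of the two pieces defining $\chi_{\bz, \ell} D_\a(a, \L; g_p)$ (respectively $(I-\chi_{\bz, 2\ell}) D_\a(a, \L; g_p)$) to the corresponding piece with $\L$ replaced by $\R^d$, modulo a trace-class error of size $(\a\ell)^{d-m}$. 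The nonlinear term $g_p(W_\a(a;\L))$ will be handled by Corollary \ref{local:cor} (resp.\ Corollary \ref{localout:cor}) with $\Pi = \R^d$, while the Wiener--Hopf term $W_\a(a^p;\L) = \chi_\L\op_\a(a^p)\chi_\L$ is handled directly via Proposition \ref{comm:prop}.

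For \eqref{suz:eq}, the hypothesis $B(\bz, 2\ell) \subset \L$ gives $\L \cap B(\bz, 2\ell) = B(\bz, 2\ell) = \R^d \cap B(\bz, 2\ell)$, so Corollary \ref{local:cor} with $\Pi = \R^d$ yields $\|\chi_{\bz, \ell}[g_p(W_\a(a;\L)) - \op_\a(a^p)]\|_1 \lesssim (\a\ell)^{d-m}$. For the Wiener--Hopf term, since $\chi_{\bz,\ell}\chi_\L = \chi_{\bz,\ell}$, I write $\chi_{\bz,\ell} W_\a(a^p;\L) = \chi_{\bz,\ell}\op_\a(a^p)\chi_\L = \chi_{\bz,\ell}\op_\a(a^p) - \chi_{\bz,\ell}\op_\a(a^p)(I-\chi_\L)$. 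The inclusion $\R^d\setminus\L \subset \R^d\setminus B(\bz, 2\ell)$ justifies the factorization $(I-\chi_\L) = (I-\chi_{\bz,2\ell})(I-\chi_\L)$, so Proposition \ref{comm:prop} with $r=2$ bounds the last term in trace norm by $(\a\ell)^{d-m}$. Subtracting the two estimates proves \eqref{suz:eq}.

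The bound \eqref{outsuz:eq} is obtained by the same scheme with the geometric roles exchanged: the hypothesis $B(\bz, \ell)^c \subset \L$ gives $\L \cap B(\bz, \ell)^c = \R^d \cap B(\bz, \ell)^c$, so Corollary \ref{localout:cor} with $\Pi = \R^d$ yields $\|(I-\chi_{\bz,2\ell})[g_p(W_\a(a;\L)) - \op_\a(a^p)]\|_1 \lesssim (\a\ell)^{d-m}$. For the Wiener--Hopf part, the inclusions $\R^d\setminus B(\bz,2\ell) \subset \L$ and $\R^d\setminus\L \subset B(\bz,\ell)$ give respectively $(I-\chi_{\bz,2\ell})\chi_\L = I-\chi_{\bz,2\ell}$ and $(I-\chi_\L) = (I-\chi_\L)\chi_{\bz,\ell}$, so the error reduces to $(I-\chi_{\bz,2\ell})\op_\a(a^p)\chi_{\bz,\ell}(I-\chi_\L)$. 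Its trace norm is bounded by the adjoint form of Proposition \ref{comm:prop}: $\|(I-\chi_{\bz,2\ell})\op_\a(a^p)\chi_{\bz,\ell}\|_1 = \|\chi_{\bz,\ell}\op_\a(\overline{a^p})(I-\chi_{\bz,2\ell})\|_1 \lesssim (\a\ell)^{d-m}$, valid because $\overline{a^p}$ satisfies the same bounds \eqref{abounds:eq}. I do not foresee any substantive obstacle; the only subtlety is the adjoint use of Proposition \ref{comm:prop}, which is routine.
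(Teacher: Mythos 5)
Your proof is correct and follows essentially the same route as the paper: both exploit $D_\a(a,\R^d;g_p)=0$ and control the discrepancy between $\L$ and $\R^d$ by the localization lemmas. The only cosmetic difference is in the Wiener--Hopf term $W_\a(g_p\circ a;\L)$: the paper invokes Corollary \ref{local:cor} a second time (with $g_1$ and symbol $g_p\circ a$), while you unwind it directly via $\chi_{\bz,\ell}\chi_\L=\chi_{\bz,\ell}$ and Proposition \ref{comm:prop}; these are the same estimate in a thin disguise, since the $p=1$ case of Corollary \ref{local:cor} reduces to exactly that computation.
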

 
\begin{proof} 
Assume that $B(\bz, 2\ell)\subset \L$. 
By Corollary \ref{local:cor}, 
\begin{align*}
\|\chi_{\bz, \ell}\big(
g_p(W_\a(a, \L)) - g_p(W_\a(a, \R^d))
\big)\|_1
\lesssim &\ (\a\ell)^{d-m},\\[0.2cm]
\|\chi_{\bz, \ell}\big(
W_\a(g_p\circ a, \L) - W_\a(g_p\circ a, \R^d)
\big)\|_1
\lesssim &\ (\a\ell)^{d-m}
\end{align*}
Since $g_p(W_\a(a; \R^d)) = \op_\a(g_p(a)) 
= W_\a(g_p\circ a, \R^d)$, by the 
definition \eqref{Dalpha:eq}, the bounds above imply  
\eqref{suz:eq}. 
The estimate \eqref{outsuz:eq} is proved in the same way.
\end{proof}

Let us establish a variant of Corollary \ref{local:cor} 
without the condition \eqref{local:eq}. 

\begin{lem}\label{tlp:lem} 
Let $\L$ and $\Pi$ be arbitrary 
(measurable) sets. Then for any $m\ge d+1$, , and any $\a>0$, $\ell>0$, we have
\begin{align}\label{tlp:eq}
\|\chi_{\bz, \ell}  \bigl(g_p(W_\a(a, \L) ) - 
&\  g_p(W_\a(a, \Pi) )\bigr)\|_1\notag\\[0.2cm]
\lesssim &\ 
(\a\ell)^{d-m} 
+ \a^d \ell^{\frac{d}{2}}
\meas_d\big(
B(\bz, 2\ell)\cap(\Pi\triangle\L)
\big)^{\frac{1}{2}}.
\end{align} 
\end{lem}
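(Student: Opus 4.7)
The plan is to interpose an auxiliary set $\Xi$ that agrees with $\Pi$ inside $B(\bz, 2\ell)$ and with $\L$ outside. Specifically, set
\begin{align*}
\Xi := (\L \setminus B(\bz, 2\ell)) \cup (\Pi \cap B(\bz, 2\ell)),
\end{align*}
so that $\Xi \cap B(\bz, 2\ell) = \Pi \cap B(\bz, 2\ell)$ and $\L \triangle \Xi = B(\bz, 2\ell) \cap (\L \triangle \Pi)$. The triangle inequality combined with Corollary \ref{local:cor} applied to $\Xi$ and $\Pi$ reduces matters to proving
\begin{align*}
\|\chi_{\bz, \ell}\bigl(g_p(W_\a(a, \L)) - g_p(W_\a(a, \Xi))\bigr)\|_1 \lesssim \a^d \ell^{d/2} \meas_d(\L \triangle \Xi)^{1/2} + (\a\ell)^{d-m}.
\end{align*}

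For this I would use the telescoping $A^p - B^p = \sum_{j=0}^{p-1} A^j (A - B) B^{p-1-j}$ with $A = W_\a(a, \L)$, $B = W_\a(a, \Xi)$, together with the decomposition
\begin{align*}
A - B = P\, \op_\a(a)\, \chi_\L + \chi_\Xi\, \op_\a(a)\, P, \qquad P := \chi_\L - \chi_\Xi,
\end{align*}
where $P$ is multiplication by a $\{-1,0,1\}$-valued function supported on $\L \triangle \Xi \subset B(\bz, 2\ell)$. Each of the $2p$ resulting summands is estimated via $\|CD\|_1 \leq \|C\|_2 \|D\|_2$ together with the elementary bound
\begin{align*}
\|\chi_E\, \op_\a(a)\, \chi_F\|_2^2 \leq \a^d \|a\|_{\plainL{2}}^2 \min(\meas_d E, \meas_d F),
\end{align*}
which follows from Plancherel's theorem applied to the Schwartz kernel of $\op_\a(a)$ and is finite since $\b > d/2$ in \eqref{abounds:eq}. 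The splitting is arranged so that one Hilbert--Schmidt factor carries the localization from $\chi_{\bz, \ell}$ (contributing $\a^{d/2} \ell^{d/2}$) while the other carries $P$ or $\chi_{\L \triangle \Xi}$ (contributing $\a^{d/2} \meas_d(\L \triangle \Xi)^{1/2}$), so that their product gives exactly $\a^d \ell^{d/2} \meas_d(\L \triangle \Xi)^{1/2}$.

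For the ``interior'' summands the factorization is immediate: the spare $\op_\a(a)$ available in each of $W_\a(a, \L)^j$ and $W_\a(a, \Xi)^{p-1-j}$ pairs with $\chi_{\bz, \ell}$ on one side and with $P$ on the other. The main technical obstacle is the ``edge'' cases, in which one of these powers equals the identity so that $\chi_{\bz, \ell}$ is adjacent to $P$ with no $\op_\a(a)$ in between, or, more delicately, the decomposition of $A - B$ places an unbounded projection $\chi_\L$ next to an $\op_\a(a)$ on the ``wrong'' side of $P$, preventing a Hilbert--Schmidt factor from forming. The remedy is to insert the partition of unity $I = \chi_{\bz, 2\ell} + (I - \chi_{\bz, 2\ell})$ immediately after the offending $\op_\a(a)$. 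The near part then takes the form $\chi_E\, \op_\a(a)\, \chi_F$ with $E, F$ both of measure $\lesssim \ell^d$ and one of them contained in $\L \triangle \Xi$, whose trace norm is bounded by $\a^d \meas_d(E)^{1/2} \meas_d(F)^{1/2}$ via the factorization $\op_\a(a) = \op_\a(a_1)\, \op_\a(a_2)$ with $a_1(\bxi) := \lu \bxi \ru^{-s}$ and $a_2 := \lu \bxi \ru^s\, a$ for any $s \in (d/2,\, \b - d/2)$ (a nonempty range since $\b > d$), both $a_i$ then lying in $\plainL{2}\cap\plainC{\infty}$ so that each $\chi_{(\cdot)}\op_\a(a_i)$ is Hilbert--Schmidt. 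The complementary far part reduces, after absorbing bounded factors such as $\chi_\L$, $P$, and $W_\a(a, \Xi)^{p-1-j}$, to an operator dominated by $\chi_{\bz, \ell}\, \op_\a(a)\, (I - \chi_{\bz, 2\ell})$, whose trace norm is $\lesssim (\a\ell)^{d-m}$ by Proposition \ref{comm:prop}. Summing the contributions over all $2p$ summands yields \eqref{tlp:eq}.
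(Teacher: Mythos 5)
Your proof is correct, and it takes a somewhat different route from the paper, while resting on the same tools (the off-diagonal bound of Proposition \ref{comm:prop} and elementary Hilbert--Schmidt estimates for $\chi_E\op_\a(\cdot)\chi_F$). The paper first replaces $\chi_{\bz,\ell}\,g_p(W_\a(a,\L))$ and $\chi_{\bz,\ell}\,g_p(W_\a(a,\Pi))$ by the nested truncated products $T_p(a,\L;\bz,\ell)$ and $T_p(a,\Pi;\bz,\ell)$ from \eqref{tdef:eq} via Lemma \ref{ta:lem}, at cost $(\a\ell)^{d-m}$; since every factor of $T_p$ is already cut to a ball of radius at most $2\ell$, the telescoping of $T_p(\L)-T_p(\Pi)$ then needs only Hilbert--Schmidt estimates, and the edge cases you worry about simply never arise. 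You instead localise the \emph{set difference} by interposing $\Xi$, invoke Corollary \ref{local:cor} (which itself rests on Lemma \ref{ta:lem}) to compare $\Xi$ with $\Pi$, and then telescope the full powers $A^p-B^p$ directly; the price for bypassing the $T_p$ machinery is that the factor $\chi_\L$ in $A-B = P\op_\a(a)\chi_\L + \chi_\Xi\op_\a(a)P$ is not pre-localised, which is exactly what forces your extra insertion of $I = \chi_{\bz,2\ell}+(I-\chi_{\bz,2\ell})$ in the $j=0$ term and brings Proposition \ref{comm:prop} back in. Both routes give the stated bound, using $\meas_d(\L\triangle\Xi)=\meas_d\big(B(\bz,2\ell)\cap(\L\triangle\Pi)\big)$; the paper's is a bit tidier once $T_p$ is set up, while yours avoids re-deriving the nested structure and makes the Hilbert--Schmidt factorisation explicit (your $\op_\a(a)=\op_\a(a_1)\op_\a(a_2)$ with $a_1=\lu\bxi\ru^{-s}$, versus the paper's square-root factorisation through $\op_\a(\sqrt{|a|})$).
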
 
 
\begin{proof}
By Lemma \ref{ta:lem}, it suffices to show that 
\begin{align}\label{tp:eq}
\|T_p(a, \L; \bz, \ell) - T_p(a, \Pi; \bz, \ell)\|_{1}
\lesssim 
\a^d \ell^{\frac{d}{2}}
\meas_d\big(B(\bz, 2\ell)\cap{(\Pi\triangle\L)}\big)^{\frac{1}{2}}.
\end{align} 
Denote $V = \op_\a(a)$, and let $r_j = r_j(\ell), j = 0, 1, \dots p$ 
be as defined in \eqref{rR:eq}. Estimate for each $j = 1, 2, \dots, p$: 
\begin{align}\label{p1:eq}
\|\chi_{\bz, r_j}(\chi_\L V\chi_\L - \chi_\Pi V\chi_\Pi)\chi_{\bz, r_j}\|_1
\le &\ \|\chi_{\bz, r_j}\chi_{\L\triangle\Pi}V \chi_{\bz, r_j}\|_1
+ \|\chi_{\bz, r_j}V\chi_{\L\triangle\Pi} \chi_{\bz, r_j}\|_1 
\notag\\[0.2cm]
\le &\ 2\big\|\chi_{\bz, r_j}\chi_{\L\triangle\Pi}\op_\a(\sqrt{|a|})\big\|_2 
\ \big\|\op_\a(\sqrt{|a|})\chi_{\bz, r_j}\big\|_2 \notag\\[0.2cm]
\lesssim &\ \a^{d} \ell^{\frac{d}{2}}
\meas_d\big(B(\bz, 2\ell)\cap (\L\triangle\Pi)\big)^{\frac{1}{2}}.
\end{align}
This means that \eqref{tp:eq} holds for $p=1$. 
Assume that \eqref{tp:eq} holds for some $p = k, 1\le k \le p-1$, and let us prove it 
for $p = k+1$.  
Denoting $T_p(\L) = T_p(a, \L; \bz, \ell)$, write:
\begin{align*}
T_{k+1}(\L) - &\ T_{k+1}(\Pi)\\[0.2cm]
= &\ \big(T_k(\L) - T_k(\Pi)\big) 
\chi_{\bz, r_{k+1}}\chi_\L V\chi_{\bz, r_{k+1}}\chi_\L  
+ T_k(\Pi)\chi_{\bz, r_{k+1}}
\big(\chi_\L V\chi_{\L} - \chi_\Pi V \chi_\Pi \big)\chi_{\bz, r_{k+1}}.
\end{align*}
Therefore 
\begin{align*}
\|T_{k+1}(\L) - &\ T_{k+1}(\Pi)\|_1\\[0.2cm]
= &\ \|T_k(\L) - T_k(\Pi)\|_1 \|V\|  
+ \|V\|^k \|\chi_{\bz, r_{k+1}}
\big(\chi_\L V\chi_{\L} - \chi_\Pi V \chi_\Pi \big)\chi_{\bz, r_{k+1}}\|_1
\end{align*}
Now, by the inductive assumption and by \eqref{p1:eq}, we get 
\eqref{tp:eq} for $p=k+1$, and hence \eqref{tlp:eq} holds.
\end{proof} 

In the next section we use  Lemma \ref{tlp:lem} with a very specific choice of the 
domains $\L$ and $\Pi$, which is described below.  
Let $\L$ be a basic Lipschitz domain
$\L = \G(\Phi)$, $\Phi\in\plainC1$. 
Let us fix a point $\hat\bz\in\R^d$ and define the new domain 
\begin{align}\label{lambda0:eq}
\L_0 = \G(\Phi_0),\ 
\Phi_0 (\hat{\bx}) = \Phi(\hat\bz) 
	+ (\hat{\bx} - \hat{\bz})\cdot\nabla\Phi(\hat\bz).  
\end{align}
Thus $\L_0$ is the epigraph of the hyperplane tangent to $\L$ 
at the point $\big(\hat\bz, \Phi(\hat\bz)\big)$.   
Let    
\begin{equation}\label{modcont:eq}
	\varepsilon(s) = \underset{\hat{\bx}, \hat{\bz}:|\hat{\bx} - \hat{\bz}|\le s}
	\max|\nabla\Phi(\hat\bx) - \nabla\Phi(\hat{\bz})|\to 0, s\to 0, 
\end{equation}
be the modulus of continuity of $\nabla\Phi$, so that     
\begin{equation*}
	\underset{|\hat\bx-\hat{\bz}|\le s}\max|\Phi(\hat\bx) - \Phi_0(\hat{\bx})|\le \varepsilon(s)s.
\end{equation*}

\begin{lem}\label{tan:lem}
Let $\L$ and $\L_0$ be as defined above.  
Let $\ell \asymp k\a^{-1}$ with come $k>0$. Then for any $m\ge d+1$, and any $\a>0$, we have 
\begin{align*}
\|\chi_{\bz, \ell}\big(
D_\a(a, \L; g_p) - D_\a(a, \L_0; g_p)
\big)\|_1
\lesssim   \big(k^{d-m} + k^{d} \sqrt{\varepsilon(2\ell)}\big).
\end{align*}
\end{lem}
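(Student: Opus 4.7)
The plan is to reduce this to Lemma \ref{tlp:lem} together with a geometric estimate on the symmetric difference $\L\triangle\L_0$ inside the ball $B(\bz,2\ell)$. First I split
\begin{align*}
\chi_{\bz,\ell}\bigl(D_\a(a,\L;g_p) - D_\a(a,\L_0;g_p)\bigr)
= &\ \chi_{\bz,\ell}\bigl(g_p(W_\a(a;\L)) - g_p(W_\a(a;\L_0))\bigr)\\
&\ - \chi_{\bz,\ell}\bigl(W_\a(g_p\circ a;\L) - W_\a(g_p\circ a;\L_0)\bigr),
\end{align*}
using the fact that, for $p\ge 1$, $\chi_\L g_p(W_\a(a;\L))\chi_\L = g_p(W_\a(a;\L))$ (and likewise for $\L_0$), since $W_\a(a;\L)$ already absorbs the projections $\chi_\L$.

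The first difference is estimated directly by Lemma \ref{tlp:lem}, producing a bound of the form $(\a\ell)^{d-m} + \a^d\ell^{d/2}\meas_d(B(\bz,2\ell)\cap(\L\triangle\L_0))^{1/2}$. For the second difference I mimic the proof of Lemma \ref{tlp:lem}, writing
\begin{equation*}
\chi_\L\op_\a(g_p\circ a)\chi_\L - \chi_{\L_0}\op_\a(g_p\circ a)\chi_{\L_0}
= (\chi_\L-\chi_{\L_0})\op_\a(g_p\circ a)\chi_\L + \chi_{\L_0}\op_\a(g_p\circ a)(\chi_\L-\chi_{\L_0}).
\end{equation*}
The operator $g_p\circ a = a^p$ again satisfies \eqref{abounds:eq}, so the far-field part (where $\chi_\L$ is paired with $1-\chi_{\bz,2\ell}$) is handled by Proposition \ref{comm:prop} and contributes $(\a\ell)^{d-m}$; the near-field part, where both indicators live inside $B(\bz,2\ell)$, is treated by the same two-factor Hilbert--Schmidt estimate used in \eqref{p1:eq} and contributes $\a^d\ell^{d/2}\meas_d(B(\bz,2\ell)\cap(\L\triangle\L_0))^{1/2}$. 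Thus the second difference obeys the same bound as the first.

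The geometric heart of the argument is the estimate of $\meas_d\bigl(B(\bz,2\ell)\cap(\L\triangle\L_0)\bigr)$. Since $\L=\G(\Phi)$ and $\L_0=\G(\Phi_0)$ are epigraphs, a point $(\hat\bx,x_d)$ lies in $\L\triangle\L_0$ precisely when $x_d$ lies between $\Phi(\hat\bx)$ and $\Phi_0(\hat\bx)$. By the modulus-of-continuity bound \eqref{modcont:eq} displayed just before the lemma, $|\Phi(\hat\bx)-\Phi_0(\hat\bx)|\le \varepsilon(2\ell)\cdot 2\ell$ for all $|\hat\bx-\hat\bz|\le 2\ell$. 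Integrating over the $(d-1)$-dimensional horizontal ball of radius $2\ell$ yields
\begin{equation*}
\meas_d\bigl(B(\bz,2\ell)\cap(\L\triangle\L_0)\bigr)\lesssim \ell^{d-1}\cdot \ell\,\varepsilon(2\ell) = \ell^d\,\varepsilon(2\ell).
\end{equation*}

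Substituting this into the trace-norm bound, and using $\a\ell\asymp k$, I obtain
\begin{equation*}
\|\chi_{\bz,\ell}(D_\a(a,\L;g_p)-D_\a(a,\L_0;g_p))\|_1
\lesssim (\a\ell)^{d-m} + \a^d\ell^{d/2}\cdot \ell^{d/2}\sqrt{\varepsilon(2\ell)}
\asymp k^{d-m} + k^d\sqrt{\varepsilon(2\ell)},
\end{equation*}
which is exactly the claim. The main obstacle to watch in the full write-up is the second step: one must verify that the argument of Lemma \ref{tlp:lem}, which was presented for $g_p(W_\a(a;\,\cdot\,))$, transfers cleanly to the simpler Wiener--Hopf operator $W_\a(g_p\circ a;\,\cdot\,)$ with the \emph{same} form of bound; the rest is a short geometric calculation.
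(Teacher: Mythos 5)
Your proof is correct and follows essentially the same route as the paper: split $D_\a$ via $D_\a(a,\cdot;g_p)=g_p(W_\a(a,\cdot))-W_\a(g_p\circ a,\cdot)$, invoke Lemma~\ref{tlp:lem} for the first difference, handle the second by the same mechanism, bound $\meas_d(B(\bz,2\ell)\cap(\L\triangle\L_0))\lesssim\ell^d\varepsilon(2\ell)$ by the modulus-of-continuity estimate, and substitute $\a\ell\asymp k$. The only cosmetic difference is that the paper notices $W_\a(g_p\circ a,\cdot)=g_1\big(W_\a(g_p\circ a,\cdot)\big)$ and so applies Lemma~\ref{tlp:lem} verbatim a second time (with symbol $a^p$, which still satisfies \eqref{abounds:eq}, and polynomial $g_1$), whereas you re-derive the $p=1$ case of that lemma by hand; your re-derivation is correct, but the paper's observation lets one skip it.
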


\begin{proof} Using the definition \eqref{Dalpha:eq}, rewrite
\begin{align*}
D_\a(a, \L; g_p) = g_p\big(W_\a(a,\L)\big) - g_1\big(W_\a(g_p(a), \L)\big). 
\end{align*}
We use Lemma \ref{tlp:lem} with 
$\Pi = \L_0$ and $\ell \asymp k\a^{-1}$, 
first for the difference $g_p\big(W_\a(a,\L)\big) - g_p\big(W_\a(a,\L_0)\big)$, 
and then for $g_1\big(W_\a(g_p(a), \L)\big) - g_1\big(W_\a(g_p(a), \L_0)\big)$. 
Estimate:
\begin{align*}
\meas_d\big(B(\bz, 2\ell) \cap(\L\triangle\L_0)\big)\lesssim \ell^d \varepsilon(2\ell)\lesssim 
k^d\a^{-d}\varepsilon(2\ell).
\end{align*} 
Substituting 
this bound in the estimate \eqref{tlp:eq}, we get the proclaimed result. 
\end{proof}

\section{A partition of unity. Local asymptotics}
  
In this Section we focus on the local asymptotics for basic domains, 
that is we study the trace $\tr \varphi D_\a(a; \L, g_p)$ 
for $\varphi\in\plainC\infty_0(\R^d)$  and a basic $\plainC1$-domain $\L$.

\subsection{A partition of unity. Preliminary bounds}   
For the time being we only assume that $\L = \G(\Phi)$ with a Lipschitz function 
$\Phi$. Under this assumption  
we make use of a partition of unity associated with 
the following scaling function:
\begin{equation}\label{ell:eq}
\ell(\bx) = \ell^{(\varkappa)}(\bx)   
= \frac{1}{8\lu M\ru}\sqrt{(x_d - \Phi(\hat{\bx}))^2 + \varkappa^2}, 
\end{equation}
with some $\varkappa\ge 0$, and with the number 
$M = M_{\Phi}$ defined in \eqref{MPhi:eq}. 
Clearly, $|\nabla\ell| \le 8^{-1}$. 
Therefore the function $\tau = \ell$ satisfies 
\eqref{Lip:eq}, and hence \eqref{dve:eq} is also satisfied:
\begin{align}\label{dvel:eq}
\frac{8}{9}\le \frac{\ell(\boldeta)}{\ell(\bxi)}\le \frac{8}{7},\ 
\boldeta\in B(\bxi, \ell(\bxi)).
\end{align}
The bound $|\nabla\ell|\le 8^{-1}$ also allows us to associate 
with the function \eqref{ell:eq} a Whitney type 
partition of 
unity. The next proposition 
follows directly from \cite[Theorem 1.4.10]{Hoermander1993}.
 
\begin{prop}\label{hor:prop}
Let $\ell=\ell^{(\varkappa)}$ be as defined in \eqref{ell:eq}. 
Then 
one can find a sequence 
$\{\bx_j\}_{j\in\mathbb N}\subset \R^d$ such that the balls 
$B_j = B(\bx_j, \ell_j),\ \ell_j = \ell(\bx_j),$ 
form a covering of $\R^d$ for which the number of intersections is bounded 
by a constant depending only on the dimension $d$ (and not on $\varkappa$). Moreover, 
there exists a (non-negative) partition of unity $\psi_j\in\plainC\infty_0(B_j)$, 
such that  
\begin{align*}
|\nabla^m\psi_j(\bx)|\lesssim \ell_j^{-m},
\end{align*}
for each $m = 0, 1, \dots$, uniformly in $j = 1, 2, \dots$. Furthermore, 
the implicit constants in these bounds are uniform in $\varkappa\ge 0$. 
\end{prop}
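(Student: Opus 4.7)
The proposition is stated as a direct consequence of Hörmander's Theorem 1.4.10, so the plan is simply to verify that the hypotheses of that theorem hold for the function $\ell^{(\varkappa)}$ with constants independent of $\varkappa$, and then follow the standard Whitney construction. The key input to Hörmander's theorem is that $\ell^{(\varkappa)}$ is a slowly varying metric, which here is guaranteed by the Lipschitz bound $|\nabla \ell^{(\varkappa)}| \le 1/8$. This bound was already noted in the text just before the statement; it follows from a direct differentiation of \eqref{ell:eq}, using $|\nabla\Phi|\le M$ a.e.\ together with the normalisation by $8\lu M\ru$ in the denominator. Crucially, this bound does not depend on $\varkappa$, since enlarging $\varkappa$ can only increase the denominator $\sqrt{(x_d-\Phi)^2+\varkappa^2}$ and decrease $|\nabla\ell|$. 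The comparability \eqref{dvel:eq} is then immediate.

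Given the slow variation, I would carry out the covering by a standard maximality argument: choose a maximal family $\{\bx_j\}\subset\R^d$ with the property that the balls $B(\bx_j,\ell_j/2)$, $\ell_j=\ell(\bx_j)$, are pairwise disjoint. Maximality combined with \eqref{dvel:eq} forces every $\bx\in\R^d$ to lie in some $B_j=B(\bx_j,\ell_j)$: otherwise $B(\bx,\ell(\bx)/2)$ could be added, contradicting maximality after using \eqref{dvel:eq} to compare $\ell(\bx)$ with nearby $\ell_j$. To bound the multiplicity, note that if $\bx\in B_{j_1}\cap\dots\cap B_{j_N}$, then by \eqref{dvel:eq} all the radii $\ell_{j_k}$ are comparable to $\ell(\bx)$ with universal constants, and the pairwise disjoint half-balls $B(\bx_{j_k},\ell_{j_k}/2)$ all lie inside a fixed dilate of $B(\bx,\ell(\bx))$. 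A volume comparison then gives $N\le C_d$, with $C_d$ depending only on $d$.

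For the partition of unity, I would fix once and for all a function $\varphi\in\plainC\infty_0(B(\bzero,1))$ with $\varphi\equiv 1$ on $B(\bzero,1/2)$ and set $\tilde\psi_j(\bx)=\varphi\bigl((\bx-\bx_j)/\ell_j\bigr)$, so that $\supp\tilde\psi_j\subset B_j$ and $|\nabla^m\tilde\psi_j|\lesssim \ell_j^{-m}$ by the chain rule. Since the half-balls $B(\bx_j,\ell_j/2)$ already cover $\R^d$, the sum $\Sigma=\sum_j\tilde\psi_j$ is pointwise $\ge 1$ and locally finite with uniformly bounded multiplicity, so the normalisation $\psi_j=\tilde\psi_j/\Sigma$ gives a genuine partition of unity subordinate to $\{B_j\}$. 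The derivative bound $|\nabla^m\psi_j|\lesssim \ell_j^{-m}$ then follows by the Leibniz rule and induction: on the support of $\tilde\psi_j$ only finitely many $\tilde\psi_k$ contribute to $\Sigma$, and each contributing $\ell_k$ is comparable to $\ell_j$ by \eqref{dvel:eq}, so derivatives of $1/\Sigma$ are controlled by $\ell_j^{-m}$ with universal constants. There is no real obstacle here; the only point requiring attention is bookkeeping: every constant arising in the construction depends solely on the dimension and on the universal Lipschitz bound $1/8$ for $\ell^{(\varkappa)}$, so the final estimates are automatically uniform in $\varkappa\ge 0$.
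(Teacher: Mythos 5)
The paper offers no proof of its own here; it simply cites \cite[Theorem 1.4.10]{Hoermander1993}, so your approach --- verifying that $\ell=\ell^{(\varkappa)}$ is slowly varying with a constant independent of $\varkappa$, and then carrying out the standard Whitney covering construction --- is exactly the intended route, just written out in full. Your observation that $|\nabla\ell|\le 1/8$ follows from the normalisation by $8\lu M\ru$, with the bound only improving as $\varkappa$ grows, is correct, and the rest of the construction (fixed bump $\varphi$, rescale, normalise by $\Sigma=\sum_k\tilde\psi_k$, Leibniz plus \eqref{dvel:eq} on overlapping scales) is the standard argument and is sound.

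There is one numerical slip in the maximality step that you should fix. With disjoint balls $B(\bx_j,\ell_j/2)$ and a putative uncovered point $\bx$ (so $|\bx-\bx_j|\ge\ell_j$), maximality only gives $|\bx-\bx_j|<\tfrac12(\ell(\bx)+\ell_j)$ for some $j$, hence $\ell_j<\ell(\bx)$; combined with $\bx_j\in B(\bx,\ell(\bx))$ and \eqref{dvel:eq}, this yields only $\ell_j\ge\tfrac{8}{9}\ell(\bx)$, which is \emph{not} contradictory. You need a larger gap between the disjointness radius and the covering radius. Taking instead a maximal family with $B(\bx_j,\ell_j/4)$ pairwise disjoint (or shrinking the Lipschitz constant of $\ell$ further), the same chain gives $\ell_j<\ell(\bx)/3$, which does contradict $\ell_j\ge\tfrac{8}{9}\ell(\bx)$, so the balls $B(\bx_j,\ell_j)$ cover. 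Everything else, including the multiplicity bound by volume comparison and the uniformity in $\varkappa\ge 0$, goes through unchanged.
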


For a set $\Om\subset \R^d$ introduce two disjoint groups of indices, 
parametrized by the number $\varkappa>0$:
\begin{align}\label{sigma:eq}
\begin{cases}
\Sigma_1(\Om) = \Sigma_1^{(\varkappa)}(\Om) = &\{ j\in\mathbb N: 
B(\bx_j, 2\ell_j)\cap \p\L\not = \varnothing,\ 
B(\bx_j, \ell_j)\cap\Om\not=\varnothing\},\\[0.2cm] 
\Sigma_2(\Om) = 
\Sigma_2^{(\varkappa)}(\Om) = &\{j\in\mathbb N: B(\bx_j, 2\ell_j)\cap \p\L = \varnothing, 
\ B(\bx_j, \ell_j)\cap\Om\not=\varnothing\}. 
\end{cases}
\end{align}
Note the following useful inequalities.

\begin{lem}\label{sigma2:lem}
Let $\bx\in B_j = B(\bx_j, \ell_j)$ with some $j = 1, 2, \dots$. 
If $j\in \Sigma_1(\R^d)$, then 
\begin{align}\label{sigmain:eq}
|x_d - \Phi(\hat\bx)|\lesssim \varkappa.
\end{align}
If $j\in \Sigma_2(\R^d)$, then 
\begin{align}\label{sigmaout:eq}
|x_d - \Phi(\hat\bx)|\gtrsim \varkappa.
\end{align}
The implicit constants in both bounds may depend only on $M$. 
\end{lem}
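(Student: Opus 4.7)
The plan is to derive both inequalities by comparing the quantity $|x_d-\Phi(\hat\bx)|$ to the local radius $\ell_j$, and then using the defining formula \eqref{ell:eq} to relate $\ell_j$ to $\varkappa$. Throughout I will use that $\Phi$ is $M$-Lipschitz and that $\bx\in B_j$ implies $|\hat\bx-\hat\bx_j|\le \ell_j$ and $|x_d-(\bx_j)_d|\le\ell_j$.

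For $j\in\Sigma_1(\R^d)$, I would pick a point $\by=(\hat\by,\Phi(\hat\by))\in\p\L\cap B(\bx_j,2\ell_j)$ and estimate, by the triangle inequality and the Lipschitz property of $\Phi$,
\begin{equation*}
|x_d-\Phi(\hat\bx)|\le |x_d-y_d|+M|\hat\bx-\hat\by|\le (1+M)\bigl(|\bx-\bx_j|+|\bx_j-\by|\bigr)\le 3(1+M)\ell_j.
\end{equation*}
Similarly, taking $\bx=\bx_j$ in this computation gives $|(\bx_j)_d-\Phi(\hat\bx_j)|\le 2(1+M)\ell_j$. Plugging this into the defining identity $(8\lu M\ru\ell_j)^2=((\bx_j)_d-\Phi(\hat\bx_j))^2+\varkappa^2$ and using $(1+M)^2\le 2\lu M\ru^2$, I obtain $56\lu M\ru^2\ell_j^2\le\varkappa^2$, so $\ell_j\lesssim \varkappa/\lu M\ru$. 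Combining with the first estimate yields $|x_d-\Phi(\hat\bx)|\lesssim\varkappa$, which is \eqref{sigmain:eq}.

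For $j\in\Sigma_2(\R^d)$, the hypothesis $B(\bx_j,2\ell_j)\cap\p\L=\varnothing$ says $\dist(\bx_j,\p\L)\ge 2\ell_j$, so by the triangle inequality $\dist(\bx,\p\L)\ge\ell_j$ for every $\bx\in B_j$. On the other hand, the point $(\hat\bx,\Phi(\hat\bx))$ lies on $\p\L$, hence $\dist(\bx,\p\L)\le|x_d-\Phi(\hat\bx)|$. The defining identity also gives directly $\ell_j\ge\varkappa/(8\lu M\ru)$. Chaining these gives $|x_d-\Phi(\hat\bx)|\ge\ell_j\gtrsim\varkappa$, which is \eqref{sigmaout:eq}.

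Neither step presents a real obstacle; the only thing to watch is to keep the numerical gap in the $\Sigma_1$ case, i.e. that the coefficient $8\lu M\ru$ in \eqref{ell:eq} is large enough compared to $1+M$ so that the quadratic inequality for $\ell_j$ is solvable with a positive constant on the right-hand side. This is exactly why the factor $1/(8\lu M\ru)$ was built into the definition of $\ell$, and it explains why all implicit constants depend only on $M$.
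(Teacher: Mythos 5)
Your proof is correct and reaches the same conclusion as the paper's, but the route is slightly different. The paper first records the two-sided comparison $\tfrac{1}{\lu M\ru}|x_d-\Phi(\hat\bx)|\le\dist(\bx,\p\L)\le|x_d-\Phi(\hat\bx)|$ and the quasi-constancy estimate \eqref{dvel:eq} for the scale function $\ell$, and then in both cases compares $\dist(\bx,\p\L)$ to $\ell(\bx)$ (rather than to $\ell_j$), afterwards unwinding the definition \eqref{ell:eq} of $\ell(\bx)$ to extract $\varkappa$. You instead work directly with $\ell_j=\ell(\bx_j)$: in the $\Sigma_1$ case you bound $|x_d-\Phi(\hat\bx)|$ and $|(\bx_j)_d-\Phi(\hat\bx_j)|$ by $O((1+M)\ell_j)$ via the triangle inequality and the Lipschitz property of $\Phi$, then feed the second bound into the defining identity for $\ell_j$ and solve the resulting inequality for $\ell_j$; in the $\Sigma_2$ case you use only the trivial bound $\ell_j\ge\varkappa/(8\lu M\ru)$ and the triangle inequality for distances. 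Your version therefore avoids invoking \eqref{dvel:eq} and the lower bound in the paper's display \eqref{dist:eq}, which makes it a bit more self-contained at the cost of slightly more bookkeeping with the numeric constants. Both arguments crucially rely on the factor $8\lu M\ru$ built into \eqref{ell:eq} to make the quadratic inequality in the $\Sigma_1$ case yield a positive answer, as you correctly point out.
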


\begin{proof} 
First observe that 
\begin{align}\label{dist:eq}
\frac{1}{\lu M\ru}|x_d - \Phi(\hat\bx)|
\le \dist(\bx, \p\L)\le |x_d - \Phi(\hat\bx)|.
\end{align}
Now, by \eqref{dvel:eq}, 
for every $\bx\in B_j, j\in\Sigma_1(\R^d),$ we have 
\begin{align*}
\dist(\bx, \p\L)\le 3\ell_j\le \frac{24}{7}\ell(\bx).
\end{align*}
Together with the left inequality \eqref{dist:eq}, this implies that 
\begin{align*}
|x_d - \Phi(\hat\bx)|
\le \frac{3}{7} \sqrt{|x_d - \Phi(\hat\bx)|^2 + \varkappa^2}, 
\end{align*}
whence \eqref{sigmain:eq}.

If $j\in \Sigma_2(\R^d)$, then by \eqref{dvel:eq} again,
\begin{align*}
\dist(\bx, \p\L)\ge \ell_j\ge \frac{8}{9}\ell(\bx). 
\end{align*}
Together with the right inequality \eqref{dist:eq}, this implies that 
\begin{align*}
\frac{1}{9\lu M\ru} \sqrt{|x_d - \Phi(\hat\bx)|^2 + \varkappa^2} 
\le |x_d - \Phi(\hat\bx)|.
\end{align*}
Since $\lu M\ru\ge 1$, this leads to \eqref{sigmaout:eq}. 
\end{proof}

For functions $\psi_j$ found in Proposition 
\ref{hor:prop}, denote also 
\begin{align}\label{inout:eq}
\psi_{\textup{\tiny out}} = \sum\limits_{j\in \Sigma_2(\Om)} \psi_j,\ 
\psi_{\textup{\tiny in}} = \sum\limits_{j\in \Sigma_1(\Om)} \psi_j. 
\end{align}
To avoid cumbersome notation we sometimes do not reflect the dependence 
of $\psi_{\textup{\tiny out}}$ and $\psi_{\textup{\tiny in}}$ 
on the parameter $\varkappa$ and set $\Om$. 
It is often always clear from the context 
which $\varkappa$ and $\Om$ are used. 

\begin{lem} \label{cyl:lem}
Let $\L = \G(\Phi)$ with a Lipschitz function $\Phi$. 
Suppose that $h$ is a Lipschitz function 
with support in the cylinder  
\begin{align*}
\Om_R(\hat\bz) = \{\bx: |\hat\bx-\hat\bz|< R\},
\end{align*}
with some $\hat\bz\in\R^{d-1}$, 
and such that $h(\bx) = 0$ for $\bx\in \p\L$, i.e. 
$h(\hat\bx, \Phi(\hat\bx)) = 0$ for all $\hat\bx\in\R^{d-1}$. 
Suppose that $\a R\gtrsim 1$. 
Then 
\begin{align}\label{cyl:eq}
\|h D_\a(a, \L; g_p)\|_1\lesssim (\a R)^{d-2} (R\|\nabla h\|_{\plainL\infty}). 
 \end{align}
\end{lem}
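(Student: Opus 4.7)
The plan is to apply the Whitney-type partition of unity from Proposition \ref{hor:prop} attached to the scaling function $\ell = \ell^{(\varkappa)}$ of \eqref{ell:eq} with the specific choice $\varkappa = \a^{-1}$. This guarantees $\a\ell_j\gtrsim 1$ for every ball $B_j = B(\bx_j, \ell_j)$, so the local trace estimates of Lemmas \ref{localal:lem} and \ref{2ell_away:lem} are available throughout. Writing $h = \sum_j h\psi_j$ and noting that each $h\psi_j$ is a multiplication operator supported in $B_j$, the starting estimate would be
\begin{equation*}
	\|h D_\a(a, \L; g_p)\|_1
	\le \sum_j \|h\psi_j\|_{\plainL\infty}\,
	\|\chi_{\bx_j, \ell_j} D_\a(a, \L; g_p)\|_1,
\end{equation*}
restricted to indices $j$ with $B_j\cap\Om_R(\hat\bz)\ne\varnothing$. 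The key pointwise input is that for $\bx\in B_j$ with $|\hat\bx-\hat\bz|<R$ one has $h\bigl(\hat\bx,\Phi(\hat\bx)\bigr) = 0$, so the Lipschitz property of $h$ gives $|h(\bx)|\le \|\nabla h\|_{\plainL\infty}|x_d-\Phi(\hat\bx)|\lesssim \|\nabla h\|_{\plainL\infty}\ell_j$ after invoking the Lipschitz bound on $\Phi$ and the definition \eqref{ell:eq}.

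Next I would split the sum according to the dichotomy $\Sigma_1$ vs.\ $\Sigma_2$ from \eqref{sigma:eq}. For boundary balls $j\in\Sigma_1$, inequality \eqref{sigmain:eq} sharpens the pointwise bound to $|h(\bx)|\lesssim \|\nabla h\|_{\plainL\infty}\a^{-1}$, and $\ell_j\asymp \a^{-1}$; Lemma \ref{localal:lem} then yields $\|\chi_{\bx_j,\ell_j} D_\a\|_1\lesssim (\a\ell_j)^{d-1}\asymp 1$. Since the $B_j$ with $j\in \Sigma_1$ have finite overlap and cover a tube of width $\asymp \a^{-1}$ about the Lipschitz surface $\p\L\cap\Om_R(\hat\bz)$ (of $(d-1)$-measure $\lesssim R^{d-1}$), a volume count shows their number is $\lesssim (R\a)^{d-1}$. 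The $\Sigma_1$-sum is thereby bounded by $(R\a)^{d-1}\,\|\nabla h\|_{\plainL\infty}\a^{-1} = (R\a)^{d-2}(R\|\nabla h\|_{\plainL\infty})$, already matching the claimed bound.

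For interior balls $j\in\Sigma_2$, Lemma \ref{2ell_away:lem} (together with the trivial vanishing $\chi_{B_j} D_\a \equiv 0$ when $B(\bx_j, 2\ell_j)\subset \R^d\setminus\L$) gives $\|\chi_{\bx_j,\ell_j} D_\a\|_1\lesssim (\a\ell_j)^{d-m}$ for any $m\ge d+1$. By \eqref{sigmaout:eq} the centres sit in a dyadic slab $|x_{j,d}-\Phi(\hat\bx_j)|\asymp\ell_j$ as soon as $\ell_j\gg\a^{-1}$, so decomposing dyadically $\ell_j\asymp 2^k\a^{-1}$ and using the Whitney overlap control one obtains at most $\lesssim (R/\ell_j)^{d-1}$ balls per scale when $\ell_j\le R$, and $\lesssim 1$ when $\ell_j>R$. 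Substituting the pointwise bound $|h|\lesssim \|\nabla h\|_{\plainL\infty}\ell_j$ and summing each geometric series yields the same target estimate $(R\a)^{d-2}(R\|\nabla h\|_{\plainL\infty})$, provided $m$ is taken large enough (for instance $m = d+2$) to ensure convergence in both regimes. The main obstacle will be the dyadic bookkeeping for $\Sigma_2$: extracting the correct ball counts at each scale from \eqref{sigmaout:eq} and the Whitney property, and verifying that a single choice of $m$ controls both the $\ell_j\le R$ and $\ell_j>R$ regimes without generating a logarithmic loss.
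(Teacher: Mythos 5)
Your proposal matches the paper's proof essentially line for line: the same Whitney partition at scale $\ell^{(\a^{-1})}$, the same pointwise bound $|h(\bx)|\lesssim\|\nabla h\|_{\plainL\infty}\,\ell_j$ obtained from $h|_{\p\L}=0$, the same split into $\Sigma_1$/$\Sigma_2$, and the same use of Lemma \ref{localal:lem} for $\Sigma_1$ (with $\#\Sigma_1\lesssim(\a R)^{d-1}$) and Lemma \ref{2ell_away:lem} for $\Sigma_2$. The only cosmetic difference is that the paper controls the $\Sigma_2$ sum by converting it into an integral over the cylinder using \eqref{dvel:eq} rather than your dyadic scale-by-scale count; both are fine, and the bookkeeping you flag as a possible obstacle goes through without logarithmic loss for any fixed $m\ge d+2$ (or $m\ge d+3$ if one wants the paper's integral bound to converge), so the choice of $m$ presents no real difficulty.
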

 
\begin{proof} By rescaling and translation, we may assume that $R = 1$ and that  
$\hat{\bz} = \hat{\mathbf 0}, \Phi(\hat{\mathbf 0}) = 0$. 
Also, 
without loss of generality assume that $|\nabla h|\le 1$, so that 
$|h(\bx)|\le |x_d - \Phi(\hat\bx)|$.  
 
In this proof it is convenient to 
use the function \eqref{ell:eq} with $\varkappa = \a^{-1}$. 
Denote for brevity $\Sigma_m = \Sigma_m^{(\a^{-1})}(\Om_1)$, 
$m = 1, 2$. 
Let $\{\psi_j\}$ be the partition of unity in Proposition \ref{hor:prop}, 
and let $\psi_{\textup{\tiny out}}$ and $\psi_{\textup{\tiny in}}$ 
be the functions defined in \eqref{inout:eq} for $\Om = \Om_1$. 
If $j\in \Sigma_2$,  
we get from Lemma \ref{2ell_away:lem} the following bound: 
\begin{align*}
\|\chi_{B_j} D_\a(a, \L; g_p)\|_1\lesssim (\a\ell_j)^{d-m}, 
\ \forall m\ge d+1.
\end{align*}
In order to collect contributions from all such balls, 
observe that $|h(\bx)|\lesssim \ell_j$ for $\bx\in B_j$, and hence 
\begin{align}\label{out:eq}
\sum\limits_{j\in\Sigma_2} \|h \chi_{B_j} D_\a(a, \L; g_p)\|_1
\lesssim \a^{d-m} \sum\limits_{j\in\Sigma_2} 
\ell_j^{d+1-m}. 
\end{align}
In view of \eqref{dvel:eq}, we can estimate as follows:
\begin{align*}
\ell_j^{d+1-m}\lesssim \int\limits_{B_j\cap\Om_3} 
\ell(\bx)^{d+1-m} d\bx, \ \textup{if}\ \ 
\ell_j\ge 1,\  j\in \Sigma_2,
\end{align*}
and 
\begin{align*}
\ell_j^{d+1-m}\lesssim \int\limits_{B_j\cap\Om_3} \ell(\bx)^{1-m} d\bx, \ \textup{if}\ \ 
\ell_j\le 1,\  j\in\Sigma_2.
\end{align*}
Now we can sum up these inequalities 
remembering that the number of overlapping balls $B_j$ is uniformly bounded:
\begin{align*}
\sum\limits_{j\in\Sigma_2} 
\ell_j^{d+1-m} 
\lesssim &\ \int\limits_{|\hat\bx|\le 3, \ell(\bx)< 1}
\ell(\bx)^{1-m} d\bx 
+ \int\limits_{|\hat\bx|\le 3, \ell(\bx)\ge 1}\ell(\bx)^{d-m+1} d\bx\\[0.2cm]
\lesssim &\ \int\limits_{|t|<1}(|t|+\a^{-1})^{1-m} dt 
+ \int\limits_{|t|\ge 1} |t|^{d-m+1}dt
\\[0.2cm]
\lesssim &\ \a^{m-2},  
\end{align*}
where we have taken $m\ge d+3$ to ensure the 
convergence of the second integral. 
Now it follows from \eqref{out:eq} that  
\begin{align}\label{out1:eq}
\| h \psi_{\textup{\tiny out}}D_\a(a, \L; g_p) \|_1\lesssim \a^{d-2}.
\end{align}

Now consider the indices $j\in \Sigma_1$. By \eqref{sigmain:eq}, 
$\a\ell_j\asymp 1$, and hence 
we get from \eqref{localal:eq} that 
\begin{align*}
\|\chi_{B_j}D_\a(a, \L; g_p)\|_1\lesssim 1.
\end{align*} 
Taking into account that 
$|h(\bx)|\lesssim \a^{-1}$ for $\bx\in B_j$, 
uniformly in $j\in\Sigma_1$, 
and that $\#\Sigma_1\lesssim \a^{d-1}$, we can write:
\begin{align*}
\| h\psi_{\textup{\tiny in}} D_\a(a, \L; g_p)\|_1\lesssim  
\a^{-1} 
\sum\limits_{j\in\Sigma_1}\|\chi_{B_j}D_\a(a, \L; g_p)\|_1\lesssim \a^{d-2}.
\end{align*}
Together with \eqref{out1:eq}, this gives \eqref{cyl:eq}.
\end{proof}

\subsection{Local asymptotics}
Let the coefficient $\CB_1$ and $\CB_d$ 
be as defined in \eqref{cb1:eq} and \eqref{cbd:eq} respectively. 

\begin{lem}\label{planeas:lem}
Let $\Pi \subset\R^d$ be a half-space. 
Suppose that $\varphi\in\plainC\infty_0(\R^d)$  
satisfies the conditions 
\begin{align*}
\ell|\nabla\varphi|\lesssim 1,\ \supp\varphi\subset B(\bz, \ell),
\end{align*}
with some $\bz\in\R^d$ and $\ell >0$ such that $\a\ell\gtrsim 1$. Then 
\begin{align}\label{planeas:eq}
\tr \varphi D_\a(a, \Pi; g_p) 
= \a^{d-1} \CB_d(a, \varphi; \p\Pi, g_p)+ O\big((\a\ell)^{d-2}\big). 
\end{align} 
These asymptotics are 
uniform in the symbols $a$ satisfying \eqref{abounds:eq} with the same implicit constants. 
\end{lem}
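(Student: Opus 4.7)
By rotating and translating the coordinate frame, we may assume $\Pi = \{\bx = (\hat\bx, x_d) \in \R^d : x_d > 0\}$, so that $\p\Pi$ has constant outward normal $-\be_d$. The structural fact driving the computation is that $\op_\a(a)$ is translation invariant in $\hat\bx$ while $\chi_\Pi$ depends only on $x_d$; consequently $W_\a(a;\Pi)$, its polynomials, $W_\a(g_p\circ a;\Pi)$, and hence $D_\a := D_\a(a,\Pi; g_p)$, all commute with $\hat\bx$-translations. In particular the diagonal value $K_\a(x_d) := D_\a(\bx, \bx)$ is a function of $x_d$ alone.

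Pick $\psi\in\plainC\infty_0(\R)$ with $\psi\equiv 1$ on $[-3\ell, 3\ell]$ and $\supp\psi\subset[-6\ell, 6\ell]$, and split $\varphi = \varphi_1 + h$ with $\varphi_1(\bx) := \varphi(\hat\bx, 0)\psi(x_d)$ and $h := \varphi - \varphi_1$. If $|z_d|\ge 2\ell$ then $B(\bz, \ell)\cap\p\Pi = \emptyset$, so $\CB_d(a,\varphi;\p\Pi, g_p)=0$, and Lemma \ref{2ell_away:lem} directly yields $|\tr\varphi D_\a|\lesssim(\a\ell)^{-1}\le(\a\ell)^{d-2}$. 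Otherwise $|z_d|<2\ell$, whence $|x_d|<3\ell$ on $\supp\varphi$ and $h(\bx) = \varphi(\bx) - \varphi(\hat\bx, 0)$ there. In either region $h$ vanishes on $\p\Pi$, satisfies $\|\nabla h\|_{\plainL\infty}\lesssim\ell^{-1}$, and is supported in $\Omega_\ell(\hat\bz)\cap\{|x_d|\le 6\ell\}$. Lemma \ref{cyl:lem} with $R\asymp\ell$ then gives $|\tr h D_\a|\le\|hD_\a\|_1\lesssim(\a\ell)^{d-2}$.

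For $\varphi_1 = \phi(\hat\bx)\psi(x_d)$ with $\phi(\hat\bx) := \varphi(\hat\bx, 0)$, translation invariance of $K_\a$ provides
\[
\tr\varphi_1 D_\a = \Bigl(\int\phi(\hat\bx)\, d\hat\bx\Bigr)\int\psi(x_d) K_\a(x_d)\, dx_d.
\]
The partial $\a$-rescaled Fourier transform in $\hat\bx$ commutes with both $\chi_\Pi$ and $\psi(x_d)$, and decomposes $\op_\a(a)$ as the direct integral $\int^\oplus \op_\a(a(\hat\bxi,\cdot))\, d\hat\bxi$ of 1D operators, yielding
\[
K_\a(x_d) = \frac{\a^{d-1}}{(2\pi)^{d-1}}\int_{\R^{d-1}} D_\a(a(\hat\bxi,\cdot); \R_+; g_p)(x_d, x_d)\, d\hat\bxi.
\]
Therefore $\int\psi K_\a\, dx_d = \frac{\a^{d-1}}{(2\pi)^{d-1}}\int_{\R^{d-1}}\tr[\psi\,D_\a(a(\hat\bxi,\cdot);\R_+;g_p)]\, d\hat\bxi$. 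Proposition \ref{Widom_82:prop} identifies $\tr D_\a(a(\hat\bxi,\cdot);\R_+;g_p) = \CB_1(a(\hat\bxi,\cdot); g_p)$, and combining with \eqref{cbd1d:eq} and \eqref{cbd:eq} (where $\bn_\bx\equiv\be_d$ on $\p\Pi$) produces the leading contribution $\a^{d-1}\CB_d(a,\varphi;\p\Pi, g_p)$.

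\textbf{Main obstacle.} The delicate estimate is controlling the error from replacing $\psi$ by $1$ in the 1D trace, i.e. bounding
\[
\a^{d-1}\ell^{d-1}\int_{\R^{d-1}}\bigl|\tr[(1-\psi)D_\a(a(\hat\bxi,\cdot);\R_+;g_p)]\bigr|\, d\hat\bxi
\]
by $O((\a\ell)^{d-2})$. Since $1-\psi$ is supported at distance $\ge 3\ell$ from the 1D boundary $\{0\}$, the 1D analogue of Lemma \ref{2ell_away:lem}, together with the rapid off-diagonal decay of the kernel of $g_p(W_\a(b;\R_+)) - W_\a(b^p;\R_+)$ for smooth $b$, yields $\|(1-\psi)D_\a(a(\hat\bxi,\cdot);\R_+;g_p)\|_1\lesssim(\a\ell)^{-N}$ for arbitrary $N$, with only tame polynomial growth in $\hat\bxi$. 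The symbol decay \eqref{abounds:eq} then supplies integrability in $\hat\bxi$; choosing $N$ large absorbs the $\a^{d-1}\ell^{d-1}$ prefactor, and combining with the bound on $h$ completes the proof. Uniformity in $a$ is automatic since all constants depend only on those in \eqref{abounds:eq}.
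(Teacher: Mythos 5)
Your decomposition is close in spirit to the paper's, but you have introduced a cut-off $\psi(x_d)$ in the normal variable that the paper deliberately avoids, and this creates a residual error that the paper's version of the argument never has to face. The paper splits $\varphi = (\varphi - h) + h$ with $h(\hat\bx) := \varphi(\hat\bx, 0)$ \emph{viewed as a function of $\hat\bx$ alone} (no truncation in $x_d$). The piece $\varphi - h$ still vanishes on $\p\Pi$, still has support in the cylinder $\Om_\ell(\hat\bz)$, and still has gradient $\lesssim \ell^{-1}$, so Lemma \ref{cyl:lem} applies exactly as in your argument. The gain is in the other piece: $hD_\a$ is now a pseudodifferential operator on $\plainL2(\R^{d-1})$ with the trace-class operator-valued symbol $h(\hat\bx)\,D_\a\bigl(a(\hat\bxi,\cdot),\R_+;g_p\bigr)$, so its trace is given by the operator-valued trace formula, and Proposition \ref{Widom_82:prop} (which is an \emph{identity}, $\tr D_\a(a,\R_\pm;g) = \CB_1(a;g)$, not an asymptotic formula) identifies the fiber-wise trace exactly; combining with \eqref{cbd1d:eq} and \eqref{cbd:eq} yields $\tr h D_\a = \a^{d-1}\CB_d(a,\varphi;\p\Pi,g_p)$ with no error term whatsoever.

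By contrast, your version with $\varphi_1 = \phi(\hat\bx)\psi(x_d)$ reproduces the leading term only up to the defect $\int(1-\psi)K_\a$, and your handling of it is a sketch rather than a proof. It is plausible, but to make it rigorous you would need: (i) a 1D localization estimate for $D_\a(b;\R_+;g_p)$ away from the origin, summed over a dyadic covering of $\{|x_d|\ge 3\ell\}$ (the set is unbounded, so a single application of Lemma \ref{2ell_away:lem} is not enough); and (ii) quantified $\hat\bxi$-dependence of the resulting bound, coming from the fact that the 1D amplitude of $a(\hat\bxi,\cdot)$ is $v_{\hat\bxi}(t)=(1+|\hat\bxi|^2+t^2)^{-\b/2}$, integrable in $\hat\bxi$ only after extracting a power of $\lu\hat\bxi\ru$. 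None of this is wrong, but it is a nontrivial parallel development that the paper's choice of $h$ renders entirely unnecessary. In short: drop $\psi$, take $h$ depending on $\hat\bx$ only, and your ``main obstacle'' disappears.
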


\begin{proof}
Without loss of generality assume that 
\begin{align*}
\Pi = \{\bx\in\R^d: x_d>0\}.
\end{align*}
Denote $h(\hat\bx) = \varphi(\hat\bx, 0)$. Since $\varphi - h = 0$ on 
$\p\Pi$, by Lemma \ref{cyl:lem}, we have
\begin{align}\label{planeas1:eq}
\|(\varphi - h)D_\a(a, \Pi; g_p)\|_1\lesssim (\a\ell)^{d-2} 
(\ell \|\nabla\varphi\|_{\plainL\infty}).
\end{align}
The operator $h D_\a$ can be viewed as an 
$\a$-pseudo-differential operator 
in $\plainL2(\R)$ 
with the operator-valued symbol 
\begin{align*}
h(\hat\bx) D_\a\big(a(\hat\bxi,\ \cdot\ ), \R_+; g_p\big).
\end{align*}
Thus its trace is given by the formula 
%
\begin{align*}
\tr h D_\a(a, \Pi; g_p) 
= 
\bigg(\frac{\a}{2\pi}\bigg)^{d-1}
\int\limits_{\R^{d-1}}\int\limits_{\R^{d-1}} \tr 
\bigg(h(\hat\bx) D_\a\big(a(\hat\bxi,\ \cdot\ ), \R_+; g_p)\big)\bigg)
d\hat\bxi d\hat\bx.
\end{align*}
By Proposition \ref{Widom_82:prop}, the trace  
under the integral equals 
\begin{align*}
h(\hat\bx)\CB_1(a(\hat\bxi,\ \cdot\ ), g_p), \ \forall \hat\bxi\in\R^{d-1}, 
\hat\bx\in\R^{d-1},  
\end{align*}
and hence, by \eqref{cbd1d:eq} and \eqref{cbd:eq}, we have the identity 
\begin{align*}
\tr h D_\a(a, \Pi; g_p) 
= \a^{d-1} \CB_d(a, \varphi; \p\Pi, g_p).
\end{align*}
Here we have used the fact that $h = \varphi$ on the hyperplane $\p\Pi$. 
Together with \eqref{planeas1:eq} this gives 
\eqref{planeas:eq}.
\end{proof}

Now we extend the above result to arbitrary $\plainC1$-boundaries. 

\begin{lem} \label{boundas:lem} 
Let $\L$ be a basic $\plainC1$-domain. 
Assume that $\ell\asymp k\a^{-1}$.  
Let $\varphi$ be as in Lemma \ref{planeas:lem}. 
Then 
\begin{align}\label{boundas:eq}
\lim\limits_{k\to\infty}\limsup\limits_{\a\to\infty}\ 
k^{1-d}\bigg|\tr \big(\varphi D_\a(a, \L; g_p)\big)
 - \a^{d-1}\CB_d(a, \varphi; \p\L, g_p)\bigg|
 = 0,
\end{align}
uniformly in $\bz$. The convergence is also uniform in $a$, as in Lemma 
\ref{planeas:lem}.

\end{lem}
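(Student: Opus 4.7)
The plan is to locally replace the $\plainC1$-boundary $\p\L$ by a tangent hyperplane and reduce to Lemma \ref{planeas:lem}. Write $\L = \G(\Phi)$, and first dispose of the trivial case when $B(\bz,2\ell)\cap\p\L = \varnothing$. In that case either $B(\bz,2\ell)\subset\L$ or $B(\bz,2\ell)\subset\R^d\setminus\L$, so by Lemma \ref{2ell_away:lem} with any $m\ge d+1$ we have $\|\chi_{\bz,\ell}D_\a(a,\L;g_p)\|_1\lesssim (\a\ell)^{d-m}\lesssim k^{d-m}$; at the same time $\p\L\cap\supp\varphi = \varnothing$ so $\CB_d(a,\varphi;\p\L,g_p)=0$, and \eqref{boundas:eq} follows at once after dividing by $k^{d-1}$ and sending first $\a\to\infty$, then $k\to\infty$ with $m\ge d$.

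Now assume $B(\bz,2\ell)\cap\p\L\neq\varnothing$. Pick $\hat\by\in\R^{d-1}$ with $|\hat\by-\hat\bz|\lesssim \ell$ so that $\by=(\hat\by,\Phi(\hat\by))\in B(\bz,2\ell)\cap\p\L$, and define $\L_0 = \G(\Phi_0)$ as in \eqref{lambda0:eq}, i.e. the epigraph of the hyperplane tangent to $\p\L$ at $\by$. Since $\supp\varphi\subset B(\bz,\ell)$, Lemma \ref{tan:lem} (applied with $\bz$ replaced by $\by$ and the obvious adjustment of constants) gives
\bees
\bigl|\tr\varphi D_\a(a,\L;g_p) - \tr\varphi D_\a(a,\L_0;g_p)\bigr|
\le \|\varphi\|_\infty \|\chi_{\by,2\ell}\bigl(D_\a(a,\L;g_p)-D_\a(a,\L_0;g_p)\bigr)\|_1
\lesssim k^{d-m} + k^{d}\sqrt{\vare(2\ell)},
\enes
where $\vare$ is the modulus of continuity from \eqref{modcont:eq}. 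Next, $\L_0$ is a half-space, so Lemma \ref{planeas:lem} yields
\bees
\tr\varphi D_\a(a,\L_0;g_p) = \a^{d-1}\CB_d(a,\varphi;\p\L_0,g_p) + O\bigl((\a\ell)^{d-2}\bigr) = \a^{d-1}\CB_d(a,\varphi;\p\L_0,g_p) + O(k^{d-2}).
\enes

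It remains to compare the two boundary coefficients. Parametrise $\p\L$ and $\p\L_0$ by the graphs of $\Phi$ and $\Phi_0$ over $\{|\hat\bx-\hat\bz|<\ell\}$; at corresponding points, the unit normals differ by $O(\vare(2\ell))$. Using \eqref{contca:eq} pointwise and the fact that both surface measures inside $\supp\varphi$ are $\lesssim \ell^{d-1}$, one obtains
\bees
|\CB_d(a,\varphi;\p\L,g_p) - \CB_d(a,\varphi;\p\L_0,g_p)|
\lesssim \|\varphi\|_\infty\,\ell^{d-1}\,\vare(2\ell)^\d,
\enes
for any fixed $\d\in(0,1)$, which after multiplication by $\a^{d-1}$ becomes $\lesssim k^{d-1}\vare(2\ell)^\d$. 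Combining the three error bounds and dividing by $k^{d-1}$ produces
\bees
k^{1-d}\bigl|\tr\varphi D_\a(a,\L;g_p) - \a^{d-1}\CB_d(a,\varphi;\p\L,g_p)\bigr|
\lesssim k^{1-m} + k^{-1} + k\sqrt{\vare(2\ell)} + \vare(2\ell)^\d.
\enes
For fixed $k$, as $\a\to\infty$ we have $\ell\asymp k/\a\to 0$, hence $\vare(2\ell)\to 0$, so $\limsup_{\a\to\infty}$ of the right-hand side is bounded by $k^{1-m} + k^{-1}$. Choosing $m\ge d+1\ge 2$ and letting $k\to\infty$ gives \eqref{boundas:eq}. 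Uniformity in $\bz$ and in the symbol $a$ is inherited from the corresponding uniformity in Lemmas \ref{tan:lem}, \ref{planeas:lem} and estimate \eqref{contca:eq}.

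The main obstacle is the last step: controlling the difference $\CB_d(a,\varphi;\p\L,g_p)-\CB_d(a,\varphi;\p\L_0,g_p)$ uniformly with an exponent that beats the prefactor $\a^{d-1}$. This is precisely where the H\"older-type continuity of $\CA_d$ in the direction vector, established in \eqref{contca:eq}, is crucial; without an estimate with a power of $\vare(2\ell)$ one could only obtain $o(1)$ in $\a$ for fixed $k$ and not the clean double limit.
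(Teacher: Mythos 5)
Your argument is correct and follows the same route as the paper's proof: replace $\p\L$ locally by a tangent half-space via Lemma~\ref{tan:lem}, apply the flat-boundary asymptotics of Lemma~\ref{planeas:lem}, and then compare the two boundary coefficients $\CB_d$ using the H\"older bound \eqref{contca:eq} for $\CA_d$ together with the modulus of continuity of $\nabla\Phi$. The initial case distinction on $B(\bz,2\ell)\cap\p\L$ and the re-centring at a point $\by\in\p\L$ are unnecessary cosmetic variations: the paper simply takes $\L_0=\G(\Phi_0)$ with $\Phi_0$ the linearization of $\Phi$ at $\hat\bz$ itself, as in \eqref{lambda0:eq}, which handles both cases uniformly (the $\CB_d$-comparison estimate remains valid even when $\p\L$ misses $\supp\varphi$) and avoids the constant adjustment you need when invoking Lemma~\ref{tan:lem} at $\by$.
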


\begin{proof} 
For brevity, for $D_\a$ and $\CB_d$ 
 we use the notation omitting the dependence on 
all parameters except $\L$, $\p\L$ and $\varphi$, i.e. 
we write $D_\a(\L)$ and $\CB_d(\varphi; \p\L)$.

For two  
functions $F=F(\a, k)$ and $G=G(\a, k)$ 
we use the notation $F\sim G$ if 
\begin{align*}
\lim_{k\to\infty}\limsup\limits_{\a\to\infty}k^{1-d}(F - G) = 0. 
\end{align*}
Let $\L_0$ be the domain defined in \eqref{lambda0:eq}. 
By Lemma \ref{tan:lem}, for any $m\ge d+1$, we have 
\begin{align*}
|\tr \varphi D_\a(\L)
- \tr \varphi D_\a(\L_0)|\lesssim \big(k^{d-m}
+ k^d \sqrt{\varepsilon(2\ell)}\big).  
\end{align*}
Since $\varepsilon(2\ell)\to 0$ as $\a\to\infty$, for each $k$, 
we conclude that $\tr (\varphi D_\a(\L))\sim \tr (\varphi D_\a(\L_0))$. 
Furthermore, by Lemma \ref{planeas:lem}, 
\begin{align*}
\tr \big(\varphi D_\a(\L_0)\big) 
=  \a^{d-1}\CB_d(\varphi; \p\L_0) + O(k^{d-2}),
\end{align*}
so $\tr (\varphi D_\a(\L_0))\sim \a^{d-1}\CB_d(\varphi; \p\L_0)$. 
Let us now compare the asymptotic coefficients $\CB_d$ for the 
boundaries $\p\L$ and $\p\L_0$, using the definition 
\eqref{cbd:eq} and the bound \eqref{contca:eq}: 
\begin{align*}
|\CB_d(\varphi; \p\L) - \CB_d(\varphi; \p\L_0)|
\lesssim &\ \max |\CA_d(\bn_\bx) - \CA_d(\bn_\bz)| \ell^{d-1}\\[0.2cm]
\lesssim &\ \ell^{d-1}\max |\bn_\bx - \bn_\bz|^{\d},
\end{align*}
where the maximum is taken over $\bx\in \p\L\cap B(\bz, \ell)$, 
and $\d\in (0, 1)$ is arbitrary. 
By \eqref{modcont:eq}, 
\begin{align*}
\max |\bn_\bx - \bn_\bz|\lesssim 
\max|\nabla\Phi(\hat\bx) - \nabla\Phi(\hat\bz)| \le \varepsilon(\ell).
\end{align*}
Consequently, 
\begin{align*}
|\CB_d(\varphi; \p\L) - \CB_d(\varphi; \p\L_0)|
\lesssim \ell^{d-1}\varepsilon(\ell)^{\d}
\lesssim 
k^{d-1} \a^{1-d} 
\varepsilon(\ell)^{\d}, 
\end{align*}
with an arbitrary $\d\in(0, 1)$, and hence 
$ \a^{d-1}\CB_d(\varphi; \p\L) 
\sim \a^{d-1}\CB_d(\varphi; \p\L_0)$. Collecting the equivalence 
relations established above, 
we get $\tr(\varphi D_\a(\L))\sim \a^{d-1}\CB_d(\varphi; \p\L)$, 
which is exactly the formula \eqref{boundas:eq}.
\end{proof}
 
The next step is to extend  Lemma \ref{boundas:lem} 
to the functions $\varphi$ with support of a fixed size, 
i.e. independent of $\a$. 

\begin{thm} \label{loc:thm} 
Let $\L$ be a basic $\plainC1$-domain, and let 
$\varphi\in\plainC\infty_0$. 
Then  
\begin{align}\label{loc:eq}
\tr \big(\varphi D_\a(a; \L, g_p) \big)
= \a^{d-1} \CB_d(a, \varphi; \p\L, g_p) + o(\a^{d-1}), \ \a\to\infty.
\end{align}
The convergence is uniform in $a$, as in Lemma \ref{planeas:lem}.
The remainder depends on the function $\varphi$, and the domain $\L$.  
\end{thm}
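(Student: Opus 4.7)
The plan is to deduce Theorem \ref{loc:thm} from Lemma \ref{boundas:lem} by localizing $\varphi$ with a Whitney-type partition of unity at scale $\asymp k\a^{-1}$ and then sending $\a\to\infty$ followed by $k\to\infty$. Fix $\varphi\in\plainC\infty_0$ with support in some compact set $\Om$, fix $k\ge 1$, and invoke Proposition \ref{hor:prop} with scaling function $\ell^{(\varkappa)}$ from \eqref{ell:eq} at $\varkappa=k\a^{-1}$, obtaining balls $B_j=B(\bx_j,\ell_j)$ and a partition of unity $\{\psi_j\}$. Classify the indices $j$ meeting $\supp\varphi$ into the sets $\Sigma_1$ and $\Sigma_2$ from \eqref{sigma:eq}. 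By Lemma \ref{sigma2:lem}, $\ell_j\asymp k\a^{-1}$ for $j\in\Sigma_1$ (the ``boundary'' balls), which is precisely the regime covered by Lemma \ref{boundas:lem}; for $j\in\Sigma_2$ the ball $B(\bx_j,2\ell_j)$ lies entirely inside $\L$ or inside $\R^d\setminus\L$, which is covered by Lemma \ref{2ell_away:lem}. Note also that for $j\in\Sigma_2$ one has $\supp(\varphi\psi_j)\cap\p\L=\varnothing$, so $\CB_d(a,\varphi\psi_j;\p\L,g_p)=0$ by \eqref{cbd:eq}, and hence by linearity $\sum_{j\in\Sigma_1}\CB_d(a,\varphi\psi_j;\p\L,g_p)=\CB_d(a,\varphi;\p\L,g_p)$.

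For the interior contribution, Lemma \ref{2ell_away:lem} yields $\|\chi_{B_j}D_\a(a,\L;g_p)\|_1\lesssim(\a\ell_j)^{d-m}$ for any $m\ge d+1$. Using the bounded overlap of the $B_j$ and \eqref{dvel:eq}, one rewrites $\sum_{j\in\Sigma_2}\ell_j^{d-m}\lesssim\int_{\Om'}\ell(\bx)^{-m}\,d\bx$; the change of variable $s=x_d-\Phi(\hat\bx)$ bounds this integral by $(k\a^{-1})^{1-m}$, so
\[
\sum_{j\in\Sigma_2}\|\varphi\psi_j D_\a(a,\L;g_p)\|_1\lesssim \a^{d-1}k^{1-m},
\]
which is $o(\a^{d-1})$ after fixing $m$ large and letting first $\a\to\infty$ and then $k\to\infty$. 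For the boundary contribution, each $\varphi\psi_j$ with $j\in\Sigma_1$ satisfies the hypotheses of Lemma \ref{boundas:lem} (support in $B_j$, and $\ell_j|\nabla(\varphi\psi_j)|\lesssim 1$), so that lemma delivers an error of size $\epsilon(k,\a)k^{d-1}$ per term, where $\limsup_{\a\to\infty}\epsilon(k,\a)\to 0$ as $k\to\infty$. The number of boundary balls is controlled by the $(d-1)$-measure of a tubular neighbourhood of $\p\L\cap\Om'$ of width $\sim k\a^{-1}$ divided by the ball volume, giving $\#\Sigma_1\lesssim k^{1-d}\a^{d-1}$. Consequently
\[
\sum_{j\in\Sigma_1}\bigl|\tr(\varphi\psi_j D_\a)-\a^{d-1}\CB_d(a,\varphi\psi_j;\p\L,g_p)\bigr|\lesssim \epsilon(k,\a)\,\a^{d-1}.
\]
Adding the two estimates and using the identity above for the $\CB_d$ sum gives \eqref{loc:eq}.

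The delicate point, and the one dictating all choices, is the matching of powers of $k$: Lemma \ref{boundas:lem} only furnishes an $o(k^{d-1})$ per-term error, and this is absorbed precisely by the cardinality $\#\Sigma_1\lesssim k^{1-d}\a^{d-1}$. This is what forces the particular scale $\varkappa=k\a^{-1}$ in the partition of unity and the iterated (rather than simultaneous) limit $\a\to\infty$, then $k\to\infty$. Uniformity in the symbol $a$ is automatic since every tool used (Lemma \ref{2ell_away:lem}, Lemma \ref{boundas:lem}, the coefficient bound \eqref{bdtwice:eq}) carries the same uniformity.
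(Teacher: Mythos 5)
Your proposal is correct and follows essentially the same route as the paper: the Whitney partition at scale $\varkappa = k\alpha^{-1}$ from Proposition \ref{hor:prop}, the split into $\Sigma_1$ and $\Sigma_2$, Lemma \ref{2ell_away:lem} plus the $\int\ell^{-m}$ integral estimate for the interior balls, Lemma \ref{boundas:lem} combined with the cardinality bound $\#\Sigma_1\lesssim k^{1-d}\alpha^{d-1}$ for the boundary balls, and the iterated limit $\alpha\to\infty$ then $k\to\infty$. The paper's Steps 1--3 carry out exactly this decomposition and these estimates, so no further comparison is needed.
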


\begin{proof} 
Without loss of generality  
we may assume that $\supp\varphi$ is contained in the ball $B = B(\bold0, 1)$. 
Let $\ell = \ell^{(\varkappa)}$ be the function defined in \eqref{ell:eq} 
with $\varkappa = k\a^{-1}$ where $k \ge 1$. 
Let $\{B_j\}$ and 
$\{\psi_j\}$ be the covering of $\R^d$ and the subordinate 
partition of unity a in Proposition \ref{hor:prop} respectively, 
and let $\psi_{\textup{\tiny out}}$ and $\psi_{\textup{\tiny in}}$ 
be as defined in \eqref{inout:eq} with $\Om = B$.
We do 
not reflect in this notation the dependence 
on $k$ and $\a$. 
For brevity we write $D_\a, \CB_d(\psi)$ instead of $D_\a(a, \L; g_p)$ 
and $\CB_d(a, \psi; \p\L, g_p)$.

We consider separately two sets of indices $j$: 
$\Sigma_1(B)$ and $\Sigma_2(B)$, see \eqref{sigma:eq} for the definition. 

\underline{Step 1.} First we handle $\Sigma_2(B)$ and prove that for any $m\ge d+1$ 
the following bound holds: 
\begin{align}\label{sumpsij:eq}
\|\psi_{\textup{\tiny out}}\varphi D_\a\|_1
\lesssim \a^{d-1} k^{-m+1}.  
\end{align}
By definition of $\Sigma_2$, $B(\bx_j, 2\ell_j)\cap\p\L = \varnothing$, so 
by Lemma \ref{2ell_away:lem}, the left-hand side of \eqref{sumpsij:eq} 
does not exceed 
\begin{align*}
\underset{j\in\Sigma_2(B)}\sum \|\psi_j\varphi D_\a\|_1
\lesssim &\ \a^{d-m} \sum\limits_{j\in\Sigma_2(B)} \ell_j^{d-m}
\lesssim \a^{d-m}\sum\limits_{j\in\Sigma_2(B)} 
\int\limits_{B_j}\ell(\bx)^{-m} d\bx\\[0.2cm]
\lesssim &\ 
\a^{d-m} \int\limits_{B(\bold0, 2)} \ell(\bx)^{-m} d\bx 
\lesssim 
\a^{d-m}  
\int\limits_{k\a^{-1}}^1 t^{-m}dt\lesssim \a^{d-1} k^{-m+1},
\end{align*}
for any $m\ge d+1$. As in the proof of Lemma \ref{cyl:lem}, 
when passing from the sums to integrals, we have used 
the property \eqref{dvel:eq}. 
This completes the proof of \eqref{sumpsij:eq}.

\underline{Step 2.} 
Let us now turn to the function $\psi_{\textup{\tiny in}}$. 
At this step we prove that 
\begin{align}\label{kalpha:eq}
\lim\limits_{k\to\infty}\limsup\limits_{\a\to\infty}\big|
\a^{1-d}
\tr \big( \psi_{\textup{\tiny in}}\varphi D_\a\big)
- \CB_d(\varphi)\big| = 0. 
\end{align}
In view of 
\eqref{sigmain:eq}, we have 
$\ell_j\asymp k\a^{-1}$ uniformly in $j\in \Sigma_1(B)$. Thus, by Lemma 
\ref{boundas:lem}, 
\begin{equation}\label{boundastot:eq}
\lim\limits_{k\to\infty}\limsup\limits_{\a\to\infty}k^{1-d} 
 \max\limits_{j\in \Sigma_1(B)} \bigg|
 \tr(\psi_j\varphi D_\a) - \a^{d-1}\CB_d(\psi_j \varphi)
  \bigg| = 0.
  \end{equation} 
  Now we can estimate the left-hand side of \eqref{kalpha:eq}. 
 Since $\#\Sigma_1(B)\lesssim \a^{d-1}k^{1-d}$, 
we have 
\begin{align*}
\big|\a^{1-d} \tr \big( \psi_{\textup{\tiny in}}\varphi D_\a\big)
- \CB_d(\varphi)\big|
=  &\ \a^{1-d}\bigg|\sum\limits_{j\in\Sigma_1(B)}\big(
\tr(\psi_j\varphi D_\a) - \a^{d-1}\CB_d(\psi_j \varphi)
 \big)\bigg|\\[0.2cm]
 \lesssim &\ k^{1-d}
 \max\limits_{j\in\Sigma_1(B)} \bigg|
 \tr(\psi_j\varphi D_\a) - \a^{d-1}\CB_d(\psi_j \varphi)
  \bigg|.
\end{align*} 
By \eqref{boundastot:eq} the double limit (as $\a\to\infty$ and then $k\to\infty$) 
of the right-hand side 
equals zero, which implies \eqref{kalpha:eq}.

\underline{Step 3.} Proof of \eqref{loc:eq}.
According to 
\eqref{sumpsij:eq}, for any $m\ge d+1$, we have 
\begin{align*}
\limsup\limits_{\a\to\infty}
\big|\a^{1-d} \tr\big(\varphi D_\a\big) - &\  \CB_d(\varphi)\big|\\[0.2cm]
\le &\ \limsup\limits_{\a\to\infty}
\big|\a^{1-d} \tr\big(\psi_{\textup{\tiny in}}\varphi D_\a\big) 
- \CB_d(\varphi)\big| + \limsup\limits_{\a\to\infty}
\a^{1-d}\|\psi_{\textup{\tiny out}}\varphi D_\a\|_1\\[0.2cm]
\lesssim &\  \limsup\limits_{\a\to\infty}
\big|\a^{1-d} \tr\big(\psi_{\textup{\tiny in}}\varphi D_\a\big) 
- \CB_d(\varphi)\big| + k^{-m+1}.
\end{align*}
Since $k>0$ is arbitrary, we can pass to the limit as $k\to\infty$, 
so that, by \eqref{kalpha:eq}, the right-hand side tends to zero. This 
leads to  \eqref{loc:eq}, as claimed. 
\end{proof}

\section{Proof of Theorem \ref{main:thm}}
\label{proof2:sect}
 
\subsection{Proof of Theorem \ref{main:thm}: basic piece-wise 
smooth domains $\L$}

Before completing the proof of Theorem \ref{main:thm} 
we extend the formula \eqref{loc:eq} to 
basic piece-wise $\plainC1$-domains. 
 
\begin{thm}\label{pw:thm}
 Let $\L$ be a basic piece-wise $\plainC1$-domain, and let 
 $\varphi\in\plainC\infty_0(\R^d)$. Then the formula \eqref{loc:eq} 
 holds. 
 \end{thm}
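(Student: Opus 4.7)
The plan is to reduce to Theorem \ref{loc:thm} by excising a thin neighbourhood of the singular set $(\p\L)_{\rm s}$; since $(\p\L)_{\rm s}$ has Hausdorff dimension $d-2$, its contribution to both sides of \eqref{loc:eq} must be $o(\a^{d-1})$.

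For $\e > 0$ let $U_\e\subset\R^d$ be the open $\e$-neighbourhood of $(\p\L)_{\rm s}$, pick $\eta_\e\in\plainC\infty_0(U_\e)$ with $0\le\eta_\e\le 1$ and $\eta_\e = 1$ on $U_{\e/2}$, and split $\varphi = \varphi_1+\varphi_2$ where $\varphi_1 = (1-\eta_\e)\varphi$ and $\varphi_2 = \eta_\e\varphi$. Since $\supp\varphi_1$ is compact and disjoint from $(\p\L)_{\rm s}$, it can be covered by finitely many balls $B(\by_i, \rho_i)$ with the property that, on the doubled ball $B(\by_i, 2\rho_i)$, the set $\L$ coincides with some basic $\plainC1$-domain $\L_i$. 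Take a subordinate smooth partition of unity $\{\chi_i\}$ on $\supp\varphi_1$ with $\supp\chi_i\subset B(\by_i,\rho_i)$. Corollaries \ref{local:cor} and \ref{localout:cor} applied to both terms in \eqref{Dalpha:eq} (using Lemma \ref{2ell_away:lem} for the $W_\a(g_p\circ a,\cdot)$ piece) give
\begin{equation*}
\|\chi_i\varphi_1\bigl(D_\a(a,\L;g_p) - D_\a(a,\L_i;g_p)\bigr)\|_1 = O(\a^{d-m})
\end{equation*}
for any $m\ge d+1$. Theorem \ref{loc:thm} then applies to each pair $(\L_i,\chi_i\varphi_1)$, and since $\supp\chi_i\cap\p\L = \supp\chi_i\cap\p\L_i$, summation yields
\begin{equation}\label{pwred:eq}
\tr\bigl(\varphi_1 D_\a(a,\L;g_p)\bigr) = \a^{d-1}\CB_d(a,\varphi_1;\p\L, g_p) + o(\a^{d-1}).
\end{equation}

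For $\varphi_2$, I apply the Whitney-type partition of unity of Proposition \ref{hor:prop} associated to the function $\ell^{(\varkappa)}$ with $\varkappa = k\a^{-1}$, $k\ge 1$ large but fixed. The decomposition \eqref{sigma:eq} with $\Om = \supp\varphi_2$ splits the indices into those with $B_j$ away from $\p\L$ (handled by Lemma \ref{2ell_away:lem} and summed as in Step 1 of the proof of Theorem \ref{loc:thm}, contributing $O(\a^{d-1}k^{1-m})$) and those meeting $\p\L$, for which Lemma \ref{sigma2:lem} gives $\ell_j\asymp k\a^{-1}$ and \eqref{localal:eq} yields $\|\chi_{B_j}D_\a\|_1\lesssim k^{d-1}$. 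Since these boundary-intersecting balls cover a $k\a^{-1}$-neighbourhood of $\p\L\cap\supp\varphi_2\subset \p\L\cap U_\e$, their number is at most of order $(\a/k)^{d-1}\meas_{d-1}(\p\L\cap U_\e)$, whence
\begin{equation*}
\a^{1-d}\|\varphi_2 D_\a(a,\L;g_p)\|_1\lesssim \meas_{d-1}(\p\L\cap U_\e) + k^{1-m}.
\end{equation*}
On the coefficient side, \eqref{bdtwice:eq} applied to $g_p$ (whose derivatives are bounded on the compact range of $a$) gives $|\CB_d(a,\varphi_2;\p\L,g_p)|\lesssim \meas_{d-1}(\p\L\cap U_\e)$.

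Combining \eqref{pwred:eq} with these two bounds, taking $\limsup_{\a\to\infty}$, then $k\to\infty$, and finally $\e\to 0$ (using that $\meas_{d-1}(\p\L\cap U_\e)\to 0$ because $(\p\L)_{\rm s}\cap\supp\varphi$ is contained in a finite union of compact $(d-2)$-dimensional Lipschitz surfaces) produces \eqref{loc:eq}. The principal obstacle is the trace-norm bound for $\varphi_2 D_\a$: it must deliver a prefactor of surface measure (not volume) near the singular set, which is why the Whitney partition at scale $k\a^{-1}$ — whose boundary layer is exactly one ball thick — is the right tool.
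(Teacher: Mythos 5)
Your proof is correct, and its outer structure (excise a neighbourhood of $(\p\L)_{\rm s}$, treat the rest via Theorem~\ref{loc:thm} combined with Corollary~\ref{local:cor}) matches the paper's. Where you diverge is in the estimate for the excised piece $\varphi_2$. The paper works with a single cover of $\supp\varphi$ by balls of a \emph{fixed} radius $\varepsilon$ (with a subordinate partition of unity $\{\phi_j\}$), and the contribution near the singular set is controlled by the cardinality bound $\#S\lesssim\varepsilon^{2-d}$ for balls meeting $(\p\L)_{\rm s}$: each such ball contributes $O((\a\varepsilon)^{d-1})$ by \eqref{localal:eq}, giving a total $O(\varepsilon\,\a^{d-1})$ that is killed by letting $\varepsilon\to 0$ after $\a\to\infty$. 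You instead deploy a second, $\a$-dependent Whitney cover at scale $k\a^{-1}$ inside $U_\e$, count boundary-intersecting balls via a bounded-overlap volume argument against the surface measure $\meas_{d-1}(\p\L\cap U_{C\e})$, and close by observing that this measure tends to zero as $\e\to 0$ because $(\p\L)_{\rm s}$ is $(d-2)$-dimensional. Both routes are valid; the paper's is shorter because the codimension-$2$ cardinality bound delivers the smallness in $\varepsilon$ directly, without re-running the Whitney machinery or invoking the measure-shrinking property of $U_\e$ (which in turn requires a small argument using the graph structure of $\p\L$ that you only gesture at). Two minor points: your citation of Lemma~\ref{2ell_away:lem} for the $W_\a(g_p\circ a,\cdot)$ piece in the $\varphi_1$ reduction is slightly off — the clean way is to apply Corollary~\ref{local:cor} with $p=1$ and symbol $g_p\circ a$ (which still satisfies \eqref{abounds:eq}); and in using \eqref{bdtwice:eq} for $g_p$ you should note, as is implicit, that $g_p''$ only needs to be bounded on the range of the bounded symbol $a$.
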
 

\begin{proof}
As in the proof 
of Theorem \ref{loc:thm}, assume that 
$\varphi$ is supported on the ball $B = B(\bold0, 1)$.  
Further argument follows the proof of \cite[Theorem 4.1]{Sobolev2015}, 
where the asymptotics for $D_\a(a, \L; g_p)$
were studied in the case of a discontinuous symbol $a$.
Thus we give only a ``detailed sketch" of the proof. 

Cover $B$ with open 
balls of radius $\varepsilon>0$, such that the number of intersecting balls  
is bounded from above uniformly in $\varepsilon$. 
Introduce a  subordinate partition of unity $\{\phi_j\}, j = 1, 2, \dots$, 
such that 
\begin{align*}
|\nabla^n \phi_j(\bx)|\lesssim \varepsilon^{-n},\ \forall \bx\in B,
\end{align*}
uniformly in $j= 1, 2, \dots$.   
By Lemma \ref{2ell_away:lem}, the contributions 
to \eqref{loc:eq} from the balls having empty intersection 
with $\p\L$, are of order $O(\a^{d-m})$, $\forall m\ge d+1$, and hence they 
are negligible. 

Let $S$ be the set of indices such 
that the ball indexed by $j\in S$ has a non-empty intersection 
with the set $(\p\L)_{\rm s}$, see \eqref{pls:eq} for the definition. 
Since the set $(\p\L)_{\rm s}$ is built out of $(d-2)$-dimensional Lipschitz surfaces, 
we have 
\begin{equation}\label{sigmaS:eq}
\# S\lesssim \varepsilon^{2-d}.
\end{equation}
If $\a\varepsilon\gtrsim 1$, then by \eqref{localal:eq}, for each $j\in S$ we have the bound 
\begin{equation*}
\|\varphi\phi_jD_\a(a, \L; g_p)\|_1
\lesssim (\a \varepsilon)^{d-1},
\end{equation*}
uniformly in $j$. 
By virtue of \eqref{sigmaS:eq}, this implies that
\begin{equation*}
\sum\limits_{j\in S}
\|\varphi \phi_jD_\a(a, \L; g_p)\|_1
\lesssim \varepsilon \a^{d-1},\ \textup{if}\ \ \a\varepsilon\gtrsim 1.
\end{equation*}
Since
\begin{equation*}
\underset{j\in S}\sum 
\bigl|
\CB_d(a, \varphi\phi_j; \p\L, g_p)
\bigr|\lesssim \varepsilon,
\end{equation*} 
as well, 
we can rewrite the last two formulas as follows:
\begin{align}\label{vare:eq}
\limsup\limits_{\a\to\infty}
\sum\limits_{j\in S} 
\ \biggl|\frac{1}{\a^{d-1}} 
\tr\bigl(\varphi\phi_j D_\a(a, \L; g_p)\bigr)
 - \CB_d(a, \varphi\phi_j; \p\L, g_p)\biggr|
 \lesssim \varepsilon.
\end{align}
Let us now turn to the balls 
with indices $j\notin S$, such that their 
intersection with $\p\L$ is non-empty. 
We may assume that they are separated from 
$(\p\L)_{\rm s}$. Thus in each such ball the boundary of $\L$ is $\plainC1$. 
By Corollary \ref{local:cor}, we may assume that the entire 
$\L$ is $\plainC1$, and hence Theorem  
\ref{loc:thm} is applicable. Together with \eqref{vare:eq}, 
this gives 
\begin{align*}
\limsup\limits_{\a\to\infty}
\biggl|\frac{1}{\a^{d-1}} 
\tr\bigl(\varphi D_\a(a, \L; g_p)\bigr)
 - \CB_d(a, \varphi; \L, g_p)\biggr|
 \lesssim \varepsilon.
\end{align*}
Since $\varepsilon>0$ is arbitrary, this proves the Theorem.
\end{proof}

\subsection{Proof of Theorem \ref{main:thm}: completion}  
Now we can proceed with the proof of Theorem \ref{main:thm}. 
It follows the idea of 
\cite{Sobolev2016} and \cite{Leschke2016}, 
and consists of three parts: 
first we consider polynomial functions $f$, then extend it to arbitrary 
$\plainC2$-functions, and finally complete the proof for functions satisfying the 
conditions of Theorem \ref{main:thm}.

\underline{Step 1. Polynomial $f$.}  
The local asymptotics, i.e. 
Theorem \ref{pw:thm},  
extends to arbitrary piece-wise $\plainC1$-region $\L$ by 
using the standard partition of unity 
argument based on Corollary \ref{local:cor}. 

Now we turn to proving the global asymptotics \eqref{main:eq} for polynomial $f$. 
Let $R_0$ be such that either $\L\subset B(\bold0, R_0)$ or 
$\L^c\subset B(\bold0, R_0)$.  
Let $\varphi\in\plainC\infty_0(\R^d)$ 
be a function such that 
$\varphi(\bx) = 1$ for $|\bx|\le 2R_0$, and $\varphi(\bx) = 0$ 
for $|\bx| > 3R_0$. 
Thus
\begin{align*}
\tr D_\a(a, \L; g_p)
= \tr (\varphi D_\a(a, \L; g_p))
+ \tr \big((1-\varphi) D_\a(a, \L; g_p)\big).
\end{align*}
As we have just observed, by \eqref{loc:eq}, 
the first trace behaves as $\a^{d-1} \CB_d(a, \p\L; g_p)$,  as $\a\to\infty$.
If $\L\subset B(\bold0, R_0)$, 
then the second term equals zero, and hence 
\eqref{main:eq} is proved for $f = g_p$. 

If $\L^c\subset B(\bold0, R_0)$, then, 
by Lemma \ref{2ell_away:lem}, 
the second trace does not exceed 
$\a^{d-m}$ with an arbitrary 
$m\ge d+1$, and hence it gives zero contribution to the formula \eqref{main:eq}. 
Therefore \eqref{main:eq} for $f = g_p$ is proved again. 

\underline{Step 2. Arbitrary functions $f\in\plainC2(\R)$.} 
The extension from polynomials to more general 
functions is done in the same way as in 
\cite{Leschke2016}, and we remind this argument for the sake of 
completeness.  

Since the operator $W_\a(a; \L)$ is bounded uniformly in $\a$, 
we may assume that $f\in \plainC2_0(\R)$, so that $f = f\z$ 
with some fixed function 
$\z\in\plainC\infty_0(\R)$. For a $\d >0$, 
let $g = g_\d$ be a polynomial such that 
\[
\|(f-g)\z\|_{\plainC2} < \d.
\] 
For $g$  we can use the formula \eqref{main:eq} established at Step 1:
\begin{equation}\label{fp:eq} 
\lim\limits_{\a\to\infty}
\a^{1-d}
\tr D_\a(g) = \CB_d(g).
\end{equation}
On the other hand, 
thinking of the function $(f-g)\z$ as satisfying 
Condition \ref{f:cond} with some fixed $x_0$ outside the support of 
$\z$, 
we obtain from 
\eqref{crelle2:eq} that 
\begin{align*}
\|D_\a(f-g)\|_1 = &\ \| D_\a\big((f-g)\z\big)\|_1\\[0.2cm]
\lesssim &\ \1 (f-g)\z\1_2 \a^{d-1} 
\lesssim \|(f-g)\z\|_{\plainC2}\ \a^{d-1}\lesssim \d \a^{d-1},
\end{align*} 
and also, by \eqref{bdtwice:eq}, 
\[
|\CB_d(f) - \CB_d(g)|
= 
|\CB_d(f - g)| = |\CB_d\big((f - g)\z\big)|
\lesssim \| \big((f - g)\z\big)''\|_{\plainL\infty} \lesssim \d.
\]
Thus, using \eqref{fp:eq} and the additivity 
\[
D_\a(f) = D_\a(g) + D_\a(f - g),\ 
\CB_d(f) = \CB_d(g) + \CB_d(f - g),
\] we get
\begin{equation*}
\limsup\limits_{\a\to\infty}
\bigl| \a^{1-d}
\tr D_\a(f) -  \CB_d(f)\bigr|\lesssim \d.
\end{equation*}
Since $\d>0$ is arbitrary, we obtain 
\eqref{main:eq} for arbitrary $f\in\plainC2(\R)$.

\underline{Step 3. Completion of the proof.} 
Let $f$ be a function as specified in Theorem \ref{main:thm}. 
Without loss of generality 
suppose that the set $X$ consists 
of one point, and this point is $z = 0$. 

Let $\z\in\plainC\infty_0(\R)$ be a real-valued function, such that 
$\z(t) = 1$ for $|t|\le 1/2$. 
Represent $f= f_R^{(1)}+ f_R^{(2)}, 0<R\le 1$, where 
$f_R^{(1)}(t) = f(t) \z\bigl(tR^{-1}\bigr)$,\ 
$f_R^{(2)}(t) = f(t)  - f_R^{(1)}(t)$. 
It is clear that  $f_R^{(2)}\in \plainC2(\R)$, 
so one can use the formula 
\eqref{main:eq} established in Step 2 of the proof:
\begin{equation}\label{g2R_le:eq}
 \lim\limits_{\a\to\infty}
 \a^{1-d} D_\a(f_R^{(2)}) =  \CB_d(f_R^{(2)}).
\end{equation}
For $f_R^{(1)}$ we use \eqref{crelle2:eq} taking into account that 
$\1 f_R^{(1)}\1_2\lesssim \1 f\1_2$:
\begin{equation*}
 |\tr D_\a(f_R^{(1)})|
\lesssim 
R^{\g-\s} \1 f\1_2 \a^{d-1},  \ \ \a\gtrsim1,  
\end{equation*}
for any $\s \in (d\b^{-1}, \g)$, $\s\in (0, 1]$.
Moreover, 
by \eqref{coeffd:eq}, 
\begin{equation*}
|\CB_d(f_R^{(1)})|\lesssim R^{\g-\s} \1 f\1_2.
\end{equation*}
Thus, using \eqref{g2R_le:eq} and the additivity 
\[
D_\a(f) = D_\a(f_R^{(2)}) + D_\a(f_R^{(1)}),\  
\CB_d(f) = \CB_d(f_R^{(2)}) + \CB_d(f_R^{(1)}),
\] 
we get the bound
\begin{equation*}
\limsup\limits_{\a\to\infty }
\bigl|\a^{1-d} 
D_\a(f) - \CB_d(f)\bigr| 
\lesssim \1 f\1_2 R^{\g-\s}.
\end{equation*}
Since $R$ is arbitrary, by taking $R\to 0$, we obtain 
\eqref{main:eq} for the function $f$.
\qed
 
\section{Proof of Theorems \ref{homo:thm}, \ref{homolog:thm}}

Without loss of generality assume that $\|a_\l\|_{\plainL\infty}\le 1$.
We use the notation $f_\l(t) = \l^{-\g} f(\l t)$, $t\in\R$. 

\subsection{Proof of Theorem \ref{homo:thm}}  
Rewrite:
\begin{align*}
\l^{-\g} 
D_\a(\l a_\l, \L; f) = D_\a(a_\l, \L; f_\l),\ \ . 
\end{align*}
Represent the right-hand side as
\begin{align}\label{homoproof:eq}
D_\a(a_\l, \L; f_0) + D_\a(a_\l, \L; g_\l),\ g_\l= f_\l - f_0.
\end{align}
Since $|a_\l|\le 1$, we can replace the function $g_\l$ by $g_\l \z$, where 
$\z\in \plainC\infty_0(\R)$ is a function such that 
$\z(t) = 1$ for $|t|\le 1$, and $\z(t) = 0$ for $|t|\ge 2$. 

By \eqref{crelle2:eq}, the second term satisfies the bound 
\begin{align*}
\|D_\a(a_\l, \L; g_\l)\|_1\lesssim \1 g_\l \z\1_2 \a^{d-1}. 
\end{align*}
Notice that $\1 g_\l\z\1_2 = \1 (f-f_0)\z^{(\l)}\1_2$,\ $\z^{(\l)}(t) = \z(\l^{-1} t)$. 
It is straightforward that the condition 
\eqref{almosthom:eq} implies that $\1 (f-f_0)\z^{(\l)}\1_2\to 0$ as 
$\l\to 0$.
Therefore
\begin{align}\label{glam1:eq}
\a^{1-d} D_\a(a_\l, \L; g_\l)\to 0,\ \a\to\infty, \l\to 0.
\end{align}
By Theorem \ref{main:thm}, the first term in 
\eqref{homoproof:eq} satisfies 
\begin{align*}
\lim\limits_{\a\to\infty}\a^{1-d} D_\a(a_\l, \L; f_0)
= \CB_d(a_\l, \p\L; f_0),
\end{align*}
uniformly in $\l>0$. By Corollary \ref{cbconvf:cor}, the right-hand side converges to 
$\CB_d(a_0, \p\L; f_0)$ as $\l\to 0$. 
Together with \eqref{glam1:eq} this completes 
 the proof. 
\qed
 
\subsection{Proof of Theorem \ref{homolog:thm}} 
Let $f_\l(t) = \l^{-1}f(\l t)$. Similarly to 
the proof of Theorem \ref{homo:thm}, we can rewrite:
\begin{align*}
\l^{-1} 
D_\a(\l a_\l, \L; f) = D_\a(a_\l, \L; f_\l),\ \ . 
\end{align*}
Represent the right-hand side as
\begin{align}\label{homoprooflog:eq}
D_\a(a_\l, \L; h_\l) + D_\a(a_\l, \L; g_\l),\ g_\l= f_\l - h_\l.
\end{align}
Since $|a_\l|\le 1$, we can replace the function $g_\l$ by $g_\l \z$, 
as in the previous proof. 
By \eqref{almosthomlog:eq} $g_\l \z$ satisfies Condition
\ref{f:cond} with $\g = 1$, and hence, 
by \eqref{crelle2:eq}, the second term 
in \eqref{homoprooflog:eq} 
satisfies the bound 
\begin{align*}
\|D_\a(a_\l, \L; g_\l)\|_1\lesssim \1 g_\l \z\1_2 \a^{d-1}. 
\end{align*}
As in the previous proof, 
$\1 g_\l\z\1_2 = \1 (f-h)\z^{(\l)}\1_2$,\ $\z^{(\l)}(t) = \z(\l^{-1} t)$, and  
the condition 
\eqref{almosthomlog:eq} implies the convergence $\1 (f-h)\z^{(\l)}\1_2\to 0$ as 
$\l\to 0$. 
Therefore
\begin{align}\label{glam:eq}
\a^{1-d} D_\a(a_\l, \L; g_\l)\to 0,\ \a\to\infty, \l\to 0.
\end{align}
Since $h_\l(t) = - t\log\l + h(t)$, by Remark \ref{lin:rem}, 
we have that $D_\a(a_\l, \L; h_\l) = D_\a(a_\l, \L; h)$. 
The function $h$ satisfies Condition \ref{f:cond} with arbitrary $\g <1$. 
Thus, by Theorem \ref{main:thm}, the first term in 
\eqref{homoprooflog:eq} satisfies 
\begin{align*}
\lim\limits_{\a\to\infty}\a^{1-d} D_\a(a_\l, \L; h)
= \CB_d(a_\l, \p\L; h),
\end{align*}
uniformly in $\l>0$. By Corollary \ref{cbconvf:cor}, the right-hand side converges to 
$\CB_d(a_0, \p\L; h)$ as $\l\to 0$. 
Together with \eqref{glam:eq}, this 
completes the proof. 
\qed 
 

\end{document}